\documentclass[10pt]{amsart}
\usepackage[margin=1.5in]{geometry}
\usepackage{color}
\newtheorem{theorem}{Theorem}[section]
\newtheorem{lemma}[theorem]{Lemma}
\newtheorem{proposition}{Proposition}[section]
\newtheorem{corollary}{Corollary}[section]
\theoremstyle{definition}

\theoremstyle{remark}

\numberwithin{equation}{section}



\begin{document}

\title{Ellipsoidal BGK model near a global Maxwellian }

\author{Seok-Bae Yun}
\address{Department of Mathematics, Sungkyunkwan University, Suwon 440-746, Republic of Korea}
\email{sbyun01@skku.edu}



\keywords{BGK model, Ellipsoidal BGK model, Boltzmann equation, Kinetic theory of gases, Nonlinear energy method}

\begin{abstract}
The BGK model has been widely used in place of the Boltzmann equation because of the qualitatively satisfactory results
it provides at relatively low computational cost.
There is, however, a major drawback to the BGK model: The hydrodynamic limit at the Navier-Stokes level is not correct.
One evidence is that the Prandtl number
computed using the BGK model does not agree with what is derived from the Boltzmann equation.
To overcome this problem, Holway \cite{Holway} introduced the ellipsoidal BGK model where the local Maxwellian is replaced by a non-isotropic
Gaussian. In this paper, we prove the existence of classical solutions of the ES-BGK model when the initial data is a
small perturbation of the global Maxwellian. The key observation is that the degeneracy of the ellipsoidal BGK model is comparable to that of the original BGK model or the Boltzmann equation in the range $-1/2<\nu<1$.
\end{abstract}
\maketitle
\tableofcontents
\section{Introduction}The dynamics of a non-ionized monatomic rarefied gas is governed
by the Boltzmann equation. But the complex structure of the collision operator has long been a major obstacle for
theoretical and computational investigation of the Boltzmann equation. To overcome this difficulty, Bhatnagar et al.\cite{B-G-K},
and independently Walender \cite{Wal},
introduced a model equation called the BGK model, where the collision operator is replaced by a relaxation operator.
Since then, it has been widely used in place of the Boltzmann equation for various computational experiments,
since this model provides very satisfactory results at relatively low computational cost compared to the Boltzmann equation.
But the BGK model has a major drawback. Hydrodynamic limit at the Navier Stokes level is not satisfactory in that
the Prandtl number - defined as the ratio between the viscovity  and the thermal conductivity - computed using the BGK model is incorrect: The Prandlt number for the Navier-Stokes equation is around 0.7, but
the  computation using the BGK model yields 1.
To resolve this problem, Holway suggested a variant of the BGK model, called the ellipsoidal BGK model (ES-BGK model) \cite{Holway}:
\begin{eqnarray}\label{ES-BGK}
\begin{split}
&\partial_t F+v\cdot\nabla_x F=A_{\nu}\big(\mathcal{M}_{\nu}(F)-F\big),\cr
&\hspace{0.84cm} F(x,v,0)=F_0(x,v).
\end{split}
\end{eqnarray}
$F(x,v,t)$ denotes the velocity distribution function representing the number density on the phase space point $(x,v)$ in
$\mathbb{T}^3_x\times \mathbb{R}^3_v$ at time $t\in \mathbb{R}_+$.  $A_{\nu}$ is the collision frequency whose explicit form will be given later.
The non-isotropic Gaussian $\mathcal{M}_{\nu}(F)$ in the r.h.s of (\ref{ES-BGK}) is defined as follows: First, we define the macroscopic density $\rho$,
bulk velocity $U$, temperature $T$ and the stress tensor $\Theta$ by
\begin{eqnarray*}
\rho(x,t)&=&\int_{\mathbb{R}^3}F(x,v,t)dv,\cr
\rho(x,t)U(x,t)&=&\int_{\mathbb{R}^3}F(x,v,t)vdv,\cr
3\rho(x,t) T(x,t)&=&\int_{\mathbb{R}^3}F(x,v,t)|v-U(x,t)|^2dv,\cr
\rho(x,t)\Theta(x,t)&=&\int_{\mathbb{R}^3}F(x,v,t)(v-U)\otimes(v-U)dv,
\end{eqnarray*}
and introduce the temperature tensor $\mathcal{T}_{\nu}$  as a linear combination of $T$ and $\Theta$:
\begin{eqnarray*}
\mathcal{T}_{\nu}&=&\left(
\begin{array}{ccc}
(1-\nu) T+\nu\Theta_{11}&\nu\Theta_{12}&\nu\Theta_{13}\cr
\nu\Theta_{21}&(1-\nu)T+\nu\Theta_{22}&\nu\Theta_{23}\cr
\nu\Theta_{31}&\nu\Theta_{32}&(1-\nu) T+\nu\Theta_{33}
\end{array}
\right)\cr
&=&(1-\nu) T Id+\nu\Theta.
\end{eqnarray*}
The non-isotropic Gaussian $\mathcal{M}_{\nu}$ is now defined as follows:
\begin{eqnarray*}
\mathcal{M}_{\nu}(F)=\frac{\rho}{\sqrt{\det(2\pi \mathcal{T}_{\nu})}}\exp\left(-\frac{1}{2}(v-U)^{\top}\mathcal{T}^{-1}_{\nu}(v-U)\right).
\end{eqnarray*}
We note that the temperature is recovered as the trace of $\mathcal{T}_{\nu}$:
\begin{eqnarray*}
3T=\Theta_{11}+\Theta_{22}+\Theta_{33}=tr\Theta=tr\mathcal{T}_{\nu}.
\end{eqnarray*}
The collision frequency $A_{\nu}$ takes the following explicit form:
\[
A_{\nu}=\frac{\rho~T}{1-\nu},\quad-\frac{1}{2}<\nu<1.
\]
The free parameter $\nu$ is introduced to derive the correct Prandtl number.
The restriction on the range of $\nu$ is imposed to guarantee that the temperature tensor $\mathcal{T}_{\nu}$ remains positive definite. (See \cite{A-L-P-P}).
Prandtl number computed via the Chapman-Enskog expansion using the ES-BGK model is given by $Pr=1/(1-\nu)$ (See \cite{A-L-P-P,C-C,Holway,Stru}).
The two most important cases in the range $-1/2<\nu<1$ are $\nu=0$ and $\nu=(Pr-1)/Pr\approx-3/7$:
When $\nu=0$, (\ref{ES-BGK}) reduces to the classical BGK model, whereas $\nu=(Pr-1)/Pr$ corresponds to the ES-BGK model with the correct Prandtl number.\newline
\indent The relaxation operator of the ES-BGK model satisfies the following cancelation property \cite{A-B-L-P,A-L-P-P}:
\begin{eqnarray*}
\begin{split}
\int_{\mathbb{R}^3}\big(\mathcal{M}_{\nu}(F)-F\big)
\left(\begin{array}{c}
1\cr v\cr|v|^2
\end{array}\right)
dv=0,
\end{split}
\end{eqnarray*}
which implies the conservation of mass, momentum and energy:
\begin{eqnarray}\label{ConservationLawsF}
\begin{split}
\int_{\mathbb{T}^3_x\times\mathbb{R}^3_v}F(x,v,t)dxdv&=\int_{\mathbb{T}^3_x\times\mathbb{R}^3_v}F_0(x,v)dxdv,\cr
\int_{\mathbb{T}^3_x\times\mathbb{R}^3_v}F(x,v,t)vdxdv&=\int_{\mathbb{T}^3_x\times\mathbb{R}^3_v}F_0(x,v)vdxdv,\cr
\int_{\mathbb{T}^3_x\times\mathbb{R}^3_v}F(x,v,t)|v|^2dxdv&=\int_{\mathbb{T}^3_x\times\mathbb{R}^3_v}F_0(x,v)|v|^2dxdv.
\end{split}
\end{eqnarray}
Entropy dissipation property was established recently in \cite{A-L-P-P}:
\begin{eqnarray*}
\frac{d}{dt}\int_{\mathbb{T}^3_x\times\mathbb{R}^3_v}F(t)\log F(t)dxdv\leq 0.
\end{eqnarray*}
It is important to note that, as in the case of the original BGK model or the Boltzmann equation,
 the only possible equilibrium state for (\ref{ES-BGK}) is the local Maxwellian:
\[
M(F)=\frac{\rho}{(2\pi T)^{3/2}}e^{-\frac{|v-U|^2}{2T}}.
\]
To see this, let's assume that $\mathcal{M}_{\nu}(f)=f$. We then recall the definition of $\Theta$ and $\mathcal{T}_{\nu}$ to see that
\begin{eqnarray*}
\begin{split}
\int_{\mathbb{R}^3}F(v)(v-U)\otimes(v-U)dv&=\rho\Theta,\cr
\int \mathcal{M}_{\nu}(F)(v)(v-U)\otimes(v-U)dv&=\rho\mathcal{T}_{\nu}.
\end{split}
\end{eqnarray*}
Therefore, upon multiplying $(v-U)\otimes(v-U)$ to both sides of $\mathcal{M}_{\nu}(F)=F$ and integrating with respect to $v$, we have
\begin{eqnarray*}
\rho\mathcal{T}_{\nu}=\rho\Theta.
\end{eqnarray*}
In view of the definition of $\mathcal{T}_{\nu}$, this leads to
\begin{eqnarray*}
(1-\nu)TId+\nu\Theta=\Theta.
\end{eqnarray*}
Thus, $\Theta=T Id$, and we see, from the definition of $\mathcal{T}_{\nu}$, that $\mathcal{T}_{\nu}=T Id$ for $3\times 3$ identity matrix $Id$.
This gives
\begin{eqnarray*}
\mathcal{M}_{\nu}(F)&=&\frac{\rho}{\sqrt{\det(2\pi T Id)}}\exp\left(-\frac{1}{2}(v-U)^{\top}\{TId\}^{-1}(v-U)\right)\cr
&=&\frac{\rho}{(2\pi T)^{3/2}}e^{-\frac{|v-U|^2}{2T}}\cr
&=&M(F).
\end{eqnarray*}
That is,  $\mathcal{M}_{\nu}(F)$ reduces to the usual local Maxwellian $M(F)$.\newline\newline
\indent In this paper, we study the existence of classical solutions of (\ref{ES-BGK}) and their asymptotic behavior when the initial data is a small perturbation of the normalized global Maxwellian:
\begin{eqnarray}\label{global maxwellian}
\mu(v)=\frac{1}{\sqrt{(2\pi)^3}}e^{-\frac{|v|^2}{2}}.
\end{eqnarray}
We define the perturbation $f$ around $\mu$ by the relation: $F(x,v,t)=\mu+\sqrt{\mu}f(x,v,t)$ and, accordingly,
$F_0(x,v)=\mu+\sqrt{\mu}f_0(x,v)$.
Then, after linearization around the global Maxwellian, the ES-BGK model takes the following form
(See Section 2 for precise definition of each term):
\begin{eqnarray}\label{LBGK0}
\begin{split}
\partial_t f+v\cdot\nabla_xf&=L_{\nu}f+\Gamma(f),\cr
f(x,v,0)&=f_0(x,v).
\end{split}
\end{eqnarray}
where $L_{\nu}$ denotes the linearized relaxation operator and $\Gamma(f)$ is the nonlinear part.
In section 2, we verify that $L_{\nu}$ can be represented as a $\nu$-perturbation of the linearized relaxation operator
of the original BGK model:
\begin{eqnarray}\label{putting0}
L_{\nu}f=(P_0f-f)+\nu P_{1}f+\nu P_{2}f.
\end{eqnarray}
Here, $P_0$ denotes the macroscopic projection operartor on the the linear space generated by $\{\sqrt{\mu},v\sqrt{\mu},|v|^2\sqrt{\mu}\}$.
$P_1$ and $P_2$ are operators related to the burnett functions, which play a crucial role in the hydrodynamic limit of the Boltzmann equation
at the Navier-Stokes level. (See \cite{B-G-L1}).
In general, the coercivity estimate of the linearized collision or relaxation operators for spatially inhomogeneous collisional kinetic
equations are degenerate,
and the major difficulty in obtaining the global existence in the perturbative regime lies in removing the degeneracy
to recover the full coercivity \cite{Guo-whole,Guo-VMB,Guo-VPB}.
When the spatial variable lies in $\mathbb{T}^3$, the usual recipe is the use of the Poincare inequality  together with a system of macroscopic equations and the conservation laws (See, for example, \cite{Guo-VMB,Guo-VPB}). In the whole space, where the
Poincare inequality is not available, additional consideration has to be made to compensate the still lingering degeneracy \cite{Duan,Duan-StrainV,Duan-STrainM,Kawashima,Ukai,U-T}.
Therefore, it is very important to capture the degenerate coercivity estimate of the linearized relaxation operator first.
In our case, it is not clear whether the presence of the additional terms $P_1$ and $P_2$ make the linearized relxation operator more degenerate or not.
In Theorem \ref{degenerate coercivity}, we show that, for $-1/2<\nu<1$, the
degenerate coercive estimate of $L_{\nu}f$ is comparable to that of $L_0f=(P_0-I)f$, for which the usual energy method is well-established
(See Theorem \ref{degenerate coercivity}):
\begin{eqnarray*}
\langle L_{\nu}f,f\rangle_{L^2_{v}}\leq-C_{\nu}\|\{I-P_0\}f\|^2_{L^2_{v}},    \quad(-1/2<\nu<1),
\end{eqnarray*}
for some constant $C_{\nu}>0$.
This indicates that the dissipative property of the linearized relaxation operator for the ES-BGK model is essentially same as that of the
BGK model or Boltzmann equation.
\newline
\indent On the other hand, since the ES-BGK model is obtained by replacing the temperature function $T$ by the temperature tensor $\mathcal{T}_{\nu}$
in the classical BGK model, additional difficulties related to $\mathcal{T}_{\nu}$, which was not observed in the classical BGK model arise.
First, in each step of the iteration scheme designed to obtain the local in time existence of the solution, we need to check that the temperature tensor
remains strictly positive definite, which is established in Proposition \ref{Equivalence T} as:
\begin{eqnarray*}
\frac{C^{-1}_{\nu2}}{T(x,t)}Id\leq\mathcal{T}^{-1}_{\nu}(x,t)\leq \frac{C^{-1}_{\nu1}}{T(x,t)}Id.
\end{eqnarray*}
where $C_{\nu1}=\min\{1-\nu, 1+2\nu\}$ and $C_{\nu1}=\max\{1-\nu, 1+2\nu\}$.
This also shows why the restriction of the range of the free parameter in the interval $(-1/2,1)$ is crucial: It is only
in this range that the temperature tensor is comparable to $T$, and, therefore, the non-isotropic Gaussian is comparable to the local Maxwellian.
Secondly, due to the presence of the free parameter $\nu$ in the definition of the temperature field $\mathcal{T}_{\nu}$,
it is a priori not clear whether the nonlinear perturbation $\Gamma(f)$ can be estimated uniformly with respect to $\nu$ near $\nu=0$
because the the inverse of the temperature tensor $\mathcal{T}^{-1}_{\nu}$ may have problematic terms involving $1/\nu$.
Such a singularity at $\nu=0$ is undesirable considering that the case $\nu=0$ corresponds to the classical BGK model.
The above equivalence estimate guarantees that such singularity never shows up when $-1/2<\nu<1$.
\newline\newline
\indent The mathematical theory for the BGK model has a rather short history.
The first rigorous existence result can be traced back to Ukai \cite{Ukai-BGK}, where he considered stationary problem for 1 dimensional BGK model in a periodic bounded domain. Perthame established the existence of weak solutions
of the BGK model with constant collision frequency in \cite{Perthame} assuming only the finite mass, momentum, energy and entropy.
See also \cite{B-P}.
The uniqueness was considered in a more stringent functional space involving the pointwise decay in velocity \cite{P-P}.
Mischler considered similar problems in the whole space in \cite{Mischler}. Extension to $L^p$ was carried out in \cite{Z-H}.
Issautier established regularity estimates for the BGK model and proved the convergence of a Monte-Carlo type scheme
to the regular distribution function in \cite{Issau}. The convergence property of a semi-Lagrangian scheme for the BGK model was
studied in \cite{R-S-Y}.
In near Maxwellian regime, Bellouquid \cite{Bello} obtained the global well posedness in the whole space using
Ukai's spectral analysis argument \cite{Ukai}. In the periodic case, Chan employed the energy method developed by Liu et al. \cite{L-Y-Y} to establish the global in time classical solution near global Maxwellians \cite{Chan}. The convergence rate to the equilibrium was not known in this work, which was derived in \cite{Yun}.
For fluid dynamic limit of the BGK model, see \cite{SR1,SR2}.
The ES-BGK model has attracted only limited attention until very recently since it was not clear whether
the entropy dissipation property holds for this model.
It was proved in the affirmative, at least at the formal level, in \cite{A-L-P-P}, which revived the interest
on this model. To our knowledge, no existence result has been established for the ellipsoidal BGK. For numerical test
for the ES-BGK model, we refer to \cite{A-B-L-P,F-J,GT,M-S,Z-Stru}. 
For general review of the mathematical and physical theory of the Boltzmann equation and the BGK model, see \cite{C,C-I-P,GL,Sone,Sone2,Stru-book,V}.
\newline\newline
Before proceeding further, we define some notations.
\begin{itemize}
\item When there is no risk of confusion, we use generic constants $C$. Their value may change from line to line but does not depend on important
parameters.
\item We define the index set $i<j$ by
\[
\sum_{i<j}a_{ij}=a_{12}+a_{23}+a_{31}.
\]
\item $e_i$ $(i=1,2,3)$ denote the standard coordinate unit vectors in $\mathbb{R}^3$.
\item $0^n$ denotes $n$-dimensional zero vector.
\item $I(m,n;a,b)$ denotes a $(m+n)\times(m+n)$ diagonal matrix whose first $m$ diagonal elements are $a$ and following $n$ diagonal elements are $b$.
\item  $\langle\cdot,\cdot\rangle_{L^2_{v}}$ and $\langle\cdot,\cdot\rangle_{L^2_{x,v}}$ denote the standard $L^2$ inner product on
$\mathbb{R}^3_v$ and $\mathbb{T}^3_x\times \mathbb{R}^3_v$ respectively:
\begin{eqnarray*}\begin{split}
\langle f,g\rangle_{L^2_{v}}&=\int_{\mathbb{R}^3}f(v)g(v)dv,\cr
\langle f,g\rangle_{L^2_{x,v}}&=\int_{\mathbb{T}^3\times \mathbb{R}^3}f(x,v)g(x,v)dxdv.
\end{split}\end{eqnarray*}
\item  $\|\cdot\|_{L^2_{v}}$ and $\|\cdot\|_{L^2_{x,v}}$ denote the standard $L^2$ inner norms on
$\mathbb{R}^3_v$ and $\mathbb{T}^3_x\times \mathbb{R}^3_v$ respectively:
\begin{eqnarray*}
&&\|f\|^2_{L^2_{v}}=\Big(\int_{\mathbb{R}^3}|f(v)|^2dv\Big)^{\frac{1}{2}},\cr
&&\|f\|^2_{L^2_{x,v}}=\Big(\int_{\mathbb{T}^3\times \mathbb{R}^3}|f(x,v)|^2dxdv\Big)^{\frac{1}{2}}.
\end{eqnarray*}
\item We employ the following notations for the multi-indices and differential operators:
\begin{eqnarray*}
\alpha=[\alpha_0,\alpha_1,\alpha_2,\alpha_3],\quad \beta=[\beta_1,\beta_2,\beta_3],
\end{eqnarray*}
and
\begin{eqnarray*}
\partial^{\alpha}_{\beta}&=&\partial^{\alpha_0}_t\partial^{\alpha_1}_{x_1}\partial^{\alpha_2}_{x_2}\partial^{\alpha_3}_{x_3}
\partial^{\beta_1}_{v_1}\partial^{\beta_2}_{v_2}\partial^{\beta_3}_{v_3}.\cr
\end{eqnarray*}
For simplicity, when only the spatial derivatives are involved, we write $\partial^{\alpha}_x=\partial^{\alpha_1}_{x_1}\partial^{\alpha_2}_{x_2}\partial^{\alpha_3}_{x_3}$.
\end{itemize}
\subsection{Main results}
We now state our main result. We first define the high order energy
functional $\mathcal{E}(f(t))$:
\begin{eqnarray*}
\mathcal{E}\big(f(t)\big)
=\frac{1}{2}\sum_{|\alpha|+|\beta|\leq N}\|\partial^{\alpha}_{\beta}f(t)\|^2_{L^2_{x,v}}+\sum_{|\alpha|+|\beta|\leq N}\int^t_0\|\partial^{\alpha}_{\beta}f(s)\|^2_{L^2_{x,v}}ds.
\end{eqnarray*}
%
%
%
%
\begin{theorem}
 Let $-1/2<\nu<1$ and $N\geq 4$. Let $F_0=\mu+\sqrt{\mu}f_0\geq 0$ and suppose $f_0$ satisfies (\ref{ConservationLawsf}).
 Then there exist positive constants $\delta_{\nu}$  and $C=C(N,\nu)$,
 such that if $\mathcal{E}(f_0)<\delta_{\nu}$, then there exists a unique global solution $f$ to (\ref{LBGK0}) such that
 \begin{enumerate}
 \item The distribution function is non-negative for all $t\geq 0$:
 \[
 F=\mu+\sqrt{\mu}f\geq 0,
 \]
 and satisfies the conservation laws (\ref{ConservationLawsf}).
 \item The high order energy functional $\mathcal{E}\big(f(t)\big)$ is uniformly bounded:
 \[
 \mathcal{E}\big(f(t)\big)\leq C\mathcal{E}\big(f_0\big).
 \]
\item The distribution function converges to the global equilibrium exponentially fast:
\[
\sum_{|\alpha|+|\beta|\leq N}\|\partial^{\alpha}_{\beta}f(t)\|_{L^2_{x,v}}\leq C e^{-C^{\prime}t}
\]
for some constant $C$ and $C^{\prime}$.
\item If $\bar{f}$ denotes another solution corresponding to initial date $\bar{f}_0$ satisfying the same assumptions, then we have the following
uniform $L^2$-stability estimate:
\[
\|f(t)-\bar f(t)\|_{L^2_{x,v}}\leq C\|f_0-\bar{f}_0\|_{L^2_{x,v}}.
\]
\end{enumerate}
\end{theorem}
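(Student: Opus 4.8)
The plan is to establish global existence by the nonlinear energy method, following the framework of Guo for the Boltzmann equation as adapted to the BGK model by Chan and Yun. The argument rests on two pillars: a local-in-time existence theorem obtained from a linear iteration scheme, and a uniform-in-time a priori estimate for the high order energy functional $\mathcal{E}(f(t))$ valid for small data. Combining the two by a standard continuation argument produces the global solution; nonnegativity is preserved all along the iteration, and uniqueness together with the $L^2$-stability estimate follow from a Gronwall argument on the difference of two solutions.

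For the local existence I would iterate on the nonlinear equation \eqref{ES-BGK}: given $F^n\ge 0$, with macroscopic fields $\rho^n,U^n,T^n,\Theta^n$ and temperature tensor $\mathcal{T}_\nu^n$ defined from it, let $F^{n+1}$ solve the linear damped transport equation
\[
\partial_t F^{n+1}+v\cdot\nabla_x F^{n+1}+A_\nu^n F^{n+1}=A_\nu^n\,\mathcal{M}_\nu(F^n),\qquad F^{n+1}(x,v,0)=F_0(x,v),
\]
which is solved explicitly by Duhamel's formula along characteristics. Two points must be checked at each step. First, since $A_\nu^n\ge 0$ and $\mathcal{M}_\nu(F^n)\ge 0$ (it is a Gaussian), $F_0\ge 0$ forces $F^{n+1}\ge 0$. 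Second, one must verify that $\mathcal{T}_\nu^n$ stays strictly positive definite so that $\mathcal{M}_\nu(F^n)$ is well defined and comparable to the local Maxwellian; this is exactly Proposition \ref{Equivalence T}, and the comparability it provides is uniform for $-1/2<\nu<1$, which is what rules out the spurious $1/\nu$ singularity in $\mathcal{T}_\nu^{-1}$ discussed above. Using this equivalence, one shows that the map $f^n\mapsto f^{n+1}$ (in the perturbation variable) contracts on a small ball of the high order space defined by $\mathcal{E}$, so the iterates converge to a local solution on some $[0,T_*]$ with bounds uniform in $\nu$.

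The heart of the matter is the a priori estimate. Applying $\partial^\alpha_\beta$ to \eqref{LBGK0}, pairing with $\partial^\alpha_\beta f$ in $L^2_{x,v}$, and summing over $|\alpha|+|\beta|\le N$, the streaming term integrates to zero when $\beta=0$ and, for $\beta\ne 0$, produces a commutator trading one velocity derivative for one spatial derivative, handled by an induction on the number of velocity derivatives as in the standard energy method. For the linear part, the decomposition \eqref{putting0} together with Theorem \ref{degenerate coercivity} yields the microscopic dissipation $-C_\nu\sum\|\{I-P_0\}\partial^\alpha_\beta f\|^2_{L^2_{x,v}}$, modulo commutators between $\partial_\beta$ and $P_0,P_1,P_2$; since these operators act through a fixed finite family of velocity moments against Schwartz weights, such terms are absorbed by the macroscopic component and the microscopic dissipation. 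The nonlinear term $\langle\partial^\alpha_\beta\Gamma(f),\partial^\alpha_\beta f\rangle_{L^2_{x,v}}$ is bounded by $C\sqrt{\mathcal{E}(f)}\,\mathcal{D}(f)$, where $\mathcal{D}(f)$ is the dissipation rate, and here Proposition \ref{Equivalence T} is again used to estimate the $\mathcal{T}_\nu^{-1}$-dependent terms uniformly in $\nu$. This gives an inequality $\frac{d}{dt}\mathcal{E}_N(f)+c\,\mathcal{D}_{\mathrm{mic}}(f)\le C\sqrt{\mathcal{E}(f)}\,\mathcal{D}(f)$ in which only the microscopic part of $f$ is dissipated so far.

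To recover the missing macroscopic dissipation I would extract the macroscopic balance laws satisfied by the hydrodynamic components of $P_0 f$, in which $\{I-P_0\}f$ and its derivatives appear as sources; exploiting the ellipticity of this first-order system gives a bound of the form $\|\nabla_x\partial^\alpha P_0 f\|^2_{L^2_{x,v}}\le C\sum\|\{I-P_0\}\partial^{\alpha'}_{\beta'}f\|^2_{L^2_{x,v}}+C\sqrt{\mathcal{E}(f)}\,\mathcal{D}(f)$, and then, since the domain is the torus and the conservation laws \eqref{ConservationLawsf} annihilate the zero modes of the fluid fields, the Poincar\'e inequality upgrades this to control of $\|P_0 f\|^2$ itself. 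Adding a small multiple of these macroscopic estimates to the energy inequality yields, for a functional $\mathcal{E}_N(f)$ equivalent to $\sum_{|\alpha|+|\beta|\le N}\|\partial^\alpha_\beta f\|^2_{L^2_{x,v}}$, the closed estimate $\frac{d}{dt}\mathcal{E}_N(f)+c\,\mathcal{E}_N(f)\le C\sqrt{\mathcal{E}_N(f)}\,\mathcal{E}_N(f)$; for $\mathcal{E}(f_0)<\delta_\nu$ small the cubic term is dominated, whence $\mathcal{E}_N(f(t))\le\mathcal{E}_N(f_0)$ and $\mathcal{E}_N(f(t))\le e^{-C't}\mathcal{E}_N(f_0)$, giving items (2) and (3) after continuation to all $t\ge 0$. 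For (4), I would write the equation for $f-\bar f$, estimate $\frac{d}{dt}\|f-\bar f\|^2_{L^2_{x,v}}$ using that $\mathcal{M}_\nu(\cdot)$ and $A_\nu(\cdot)$ are locally Lipschitz with constants uniform in $\nu$ (again via Proposition \ref{Equivalence T}) together with the smallness of both solutions, and close by Gronwall. I expect the main obstacles to be precisely the uniform-in-$\nu$ control of the $\mathcal{T}_\nu^{-1}$-dependent nonlinearity and the bookkeeping of the extra $P_1,P_2$ contributions inside the macroscopic estimates, the two places where the argument genuinely departs from the classical BGK case.
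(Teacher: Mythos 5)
Your proposal follows essentially the same route as the paper: the same iteration scheme for local existence (your damped-transport formulation is just a rearrangement of the paper's scheme (\ref{Localscheme})), with positivity from Duhamel and positive-definiteness of $\mathcal{T}_\nu^n$ from Proposition \ref{Equivalence T}; the same degenerate coercivity input from Theorem \ref{degenerate coercivity}; the same micro-macro decomposition with elliptic/Poincar\'e control of the fluid part to recover full coercivity; and the same closed energy inequality, continuity argument, and Gronwall-based stability. The key ingredients you single out --- uniform-in-$\nu$ control of $\mathcal{T}_\nu^{-1}$ and the bookkeeping of the $P_1,P_2$ contributions --- are exactly the points the paper identifies as the genuine departures from the classical BGK case.
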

This paper is organized as follows: In section 2, we consider the derivation of the linearized ES-BGK equation and the main result is stated. We also derive the coercive estimate and determine the kernel of $L_{\nu}$.
In section 3, various estimates on the macroscopic field are established and, based on this, the local in time existence is obtained.
In section 4, the nonlinear energy estimate is derived, which readily leads to the global existence and the asymptotic behavior.
\section{Linearization}
In this section, we consider the linearzation of the ES-BGK model around the global Maxwellian (\ref{global maxwellian}).
%
%
%
%
For some technical reason, we define $G_{\nu}$ as follows:
\begin{eqnarray*}
G_{\nu}=\frac{1-\nu}{3}\left\{\frac{3\rho T+\rho |U|^2}{2}\right\}Id+\nu\left(\frac{\rho \Theta+\rho U\otimes U}{2}\right)
-\frac{\rho}{2}Id.
\end{eqnarray*}
Due to the symmetry of $G$, we can view $G$ as an element in $\mathbb{R}^6$:
\begin{eqnarray*}
\left\{G_{11},G_{22},G_{33},G_{12},G_{23},G_{31}\right\}.
\end{eqnarray*}
We also define $\mathcal{J}_{\nu}$ to be the Jacobian matrix for the change of variable $(\rho, U, \mathcal{T}_{\nu})\rightarrow (\rho, \rho U, G)$:
\begin{eqnarray*}
\mathcal{J}_{\nu}\equiv\frac{\partial(\rho, \rho U, G_{\nu})}{\partial(\rho, U, \mathcal{T}_{\nu})}.
\end{eqnarray*}
\begin{lemma}\label{Jacobian}
(1)  $\mathcal{J}_{\nu}$ is given by
\begin{eqnarray*}
\left(\begin{array}{cccccccccc}
1&0&0&0&0&0&0&0&0&0\cr
U_1&\rho&0&0&0&0&0&0&0&0\cr
U_2&0&\rho&0&0&0&0&0&0&0\cr
U_3&0&0&\rho&0&0&0&0&0&0\cr
A^{\nu}_{11}&\frac{1+2\nu}{3}\rho U_1&\frac{1-\nu}{3}\rho U_2&\frac{1-\nu}{3}\rho U_3&\frac{1}{2}\rho&0&0&0&0&0\cr
A^{\nu}_{22}&\frac{1-\nu}{3}\rho U_1&\frac{1+2\nu}{3}\rho U_2&\frac{1-\nu}{3}\rho U_3&0&\frac{1}{2}\rho&0&0&0&0\cr
A^{\nu}_{33}&\frac{1-\nu}{3}\rho U_1&\frac{1-\nu}{3}\rho U_2&\frac{1+2\nu}{3}\rho U_3&0&0&\frac{1}{2}\rho&0&0&0\cr
A^{\nu}_{12}&\frac{\nu}{2}\rho U_2&\frac{\nu}{2}\rho U_1&0&0&0&0&\frac{1}{2}\rho&0&0\cr
A^{\nu}_{23}&0&\frac{\nu}{2}\rho U_3&\frac{\nu}{2}\rho U_2&0&0&0&0&\frac{1}{2}\rho&0\cr
A^{\nu}_{31}&\frac{\nu}{2}\rho U_3&0&\frac{\nu}{2}\rho U_1&0&0&0&0&0&\frac{1}{2}\rho\cr
\end{array}
\right),
\end{eqnarray*}
where $A^{\nu}_{ij}$ is defined as
\begin{eqnarray*}
A^{\nu}_{ii}&=&\frac{1}{2}\left\{\mathcal{T}_{ii}+\frac{(1-\nu)|U|^2+3\nu U^2_i}{3}-1\right\},\cr
A^{\nu}_{ij}&=&\frac{\nu}{2}\left(\mathcal{T}_{ij}+U_iU_j\right).
\end{eqnarray*}
(2) $\mathcal{J}^{-1}_{\nu}$ is given by
\begin{eqnarray*}
\left(\begin{array}{cccccccccc}
1&0&0&0&0&0&0&0&0&0\\
-\frac{U_1}{\rho}&\frac{1}{\rho}&0&0&0&0&0&0&0&0\\
-\frac{U_2}{\rho}&0&\frac{1}{\rho}&0&0&0&0&0&0&0\\
-\frac{U_3}{\rho}&0&0&\frac{1}{\rho}&0&0&0&0&0&0\\
\widetilde{A}_{11}&-\frac{2(1+2\nu)}{3}\frac{U_1}{\rho}&-\frac{2(1-\nu)}{3}\frac{U_2}{\rho}&-\frac{2(1-\nu)}{3}\frac{U_3}{\rho}&\frac{2}{\rho}&0&0&0&0&0\\
\widetilde{A}_{22}&-\frac{2(1-\nu)}{3}\frac{U_1}{\rho}&-\frac{2(1+2\nu)}{3}\frac{U_2}{\rho}&-\frac{2(1-\nu)}{3}\frac{U_3}{\rho}&0&\frac{2}{\rho}&0&0&0&0\\
\widetilde{A}_{33}&-\frac{2(1-\nu)}{3}\frac{U_1}{\rho}&-\frac{2(1-\nu)}{3}\frac{U_2}{\rho}&-\frac{2(1+2\nu)}{3}\frac{U_3}{\rho}&0&0&\frac{2}{\rho}&0&0&0\\
\widetilde{A}_{12}&-\frac{ U_2}{\rho}&-\nu\frac{U_1}{\rho}&0&0&0&0&\frac{2}{\rho}&0&0\\
\widetilde{A}_{23}&0&-\nu\frac{U_3}{\rho}&-\nu\frac{U_2}{\rho}&0&0&0&0&\frac{2}{\rho}&0\\
\widetilde{A}_{31}&-\nu\frac{U_3}{\rho}&0&-\nu\frac{U_1}{\rho}&0&0&0&0&0&\frac{2}{\rho}\\
\end{array}
\right),
\end{eqnarray*}
where $\widetilde{A}^{\nu}_{ij}$ is defined as
\begin{eqnarray*}
\widetilde{A}_{ii}&=&-\frac{1}{\rho}\left\{\mathcal{T}_{ii}+\frac{(1-\nu)|U|^2+3\nu U^2_i}{3}-1\right\},\cr
\widetilde{A}_{ij}&=&\frac{\nu}{\rho}(-\mathcal{T}_{ij}+U_iU_j).
\end{eqnarray*}
(3) When $F=\mu$, $\mathcal{J}_{\nu}$ and $\mathcal{J}^{-1}_{\nu}$ reduce to the following simpler form:
\begin{eqnarray*}
\left.\mathcal{J}_{\nu}~\right|_{F=\mu}=I(4,6;1,1/2)~\mbox{ and }~ \left. \mathcal{J}^{-1}_{\nu}~\right|_{F=\mu}=I(4,6;1,2).
\end{eqnarray*}
For the definition of $I(m,n;a,b)$, see the notation at the end of the introduction.
\end{lemma}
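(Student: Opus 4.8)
The plan is to carry out a direct computation of the partial derivatives defining $\mathcal{J}_{\nu}$, organizing the ten output coordinates $(\rho,\rho U_1,\rho U_2,\rho U_3,G_{11},G_{22},G_{33},G_{12},G_{23},G_{31})$ as functions of the ten input coordinates $(\rho, U_1,U_2,U_3, \mathcal{T}_{11},\mathcal{T}_{22},\mathcal{T}_{33},\mathcal{T}_{12},\mathcal{T}_{23},\mathcal{T}_{31})$. The only nontrivial point is that $G_{\nu}$ is written in the excerpt in terms of $\rho,T,|U|^2,\Theta, U\otimes U$, so before differentiating I first re-express $G_{\nu}$ purely in the chosen input variables. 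Using $3T=\operatorname{tr}\Theta$ and the defining relation $\mathcal{T}_{\nu}=(1-\nu)T\,Id+\nu\Theta$, one inverts to get $\Theta=\nu^{-1}\bigl(\mathcal{T}_{\nu}-(1-\nu)T\,Id\bigr)$ and $3T=\operatorname{tr}\mathcal{T}_{\nu}$; substituting into the formula for $G_{\nu}$ the explicit $1/\nu$ factors cancel against the $\nu$ multiplying $\Theta$, leaving
\begin{eqnarray*}
G_{\nu}=\frac{\rho}{2}\Bigl(\mathcal{T}_{\nu}-Id\Bigr)+\frac{\rho}{2}\Bigl((1-\nu)|U|^2\,Id+3\nu\,U\otimes U\Bigr)\big/3,
\end{eqnarray*}
i.e. the diagonal entries are $G_{ii}=\tfrac{\rho}{2}\{\mathcal{T}_{ii}-1+\tfrac13((1-\nu)|U|^2+3\nu U_i^2)\}=\rho A^{\nu}_{ii}$ and the off-diagonal entries are $G_{ij}=\tfrac{\rho}{2}\{\mathcal{T}_{ij}+\nu U_iU_j\}=\rho A^{\nu}_{ij}$ (after absorbing the factor $\nu$ on $U_iU_j$ into the definition of $A^{\nu}_{ij}$). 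This identification of $G_{ij}=\rho A^{\nu}_{ij}$ is exactly what makes part (1) fall out: $\partial G_{ij}/\partial\rho = A^{\nu}_{ij}$, $\partial G_{ij}/\partial\mathcal{T}_{k\ell}=\tfrac{\rho}{2}\delta_{ij,k\ell}$, and $\partial G_{ij}/\partial U_k$ is read off by differentiating the quadratic-in-$U$ term, producing the $\tfrac{1\pm2\nu}{3}\rho U_k$ and $\tfrac{\nu}{2}\rho U_k$ coefficients displayed in the matrix. The first four rows (for $\rho$ and $\rho U_i$) are immediate.

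For part (2) I would not invert the $10\times 10$ matrix by brute force; instead I exploit its block lower-triangular structure. Write $\mathcal{J}_{\nu}$ in $2\times 2$ block form with blocks of sizes $4$ and $6$: the top-left $4\times 4$ block is $\begin{pmatrix}1&0\\ U&\rho I_3\end{pmatrix}$ with inverse $\begin{pmatrix}1&0\\ -U/\rho&I_3/\rho\end{pmatrix}$; the bottom-right $6\times 6$ block is $\tfrac{\rho}{2}I_6$ with inverse $\tfrac{2}{\rho}I_6$; and the bottom-left $6\times 4$ block, call it $B$, has the entries $A^{\nu}_{ij}$ in its first column and the $U$-linear coefficients in the other three. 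The standard formula for the inverse of a block lower-triangular matrix gives the bottom-left block of $\mathcal{J}^{-1}_{\nu}$ as $-\tfrac{2}{\rho}\,B\begin{pmatrix}1&0\\ -U/\rho&I_3/\rho\end{pmatrix}$; carrying out this $6\times 4$ times $4\times 4$ product yields precisely the $\widetilde{A}^{\nu}_{ij}$ in the first column and the $-\tfrac{2(1\pm2\nu)}{3}U_k/\rho$ and $-\nu U_k/\rho$ entries in the remaining columns. One checks $\widetilde{A}^{\nu}_{ii}=-\tfrac{1}{\rho}\{\mathcal{T}_{ii}+\tfrac13((1-\nu)|U|^2+3\nu U_i^2)-1\}$ and $\widetilde{A}^{\nu}_{ij}=\tfrac{\nu}{\rho}(-\mathcal{T}_{ij}+U_iU_j)$ emerge after combining the contribution $-\tfrac{2}{\rho}A^{\nu}_{ij}$ from the first column of $B$ with the contributions $+\tfrac{2}{\rho^2}\sum_k B_{ij,k}U_k$ from the $U$-columns; the $|U|^2$ terms from $A^{\nu}_{ii}$ and the $-\tfrac23\cdot\tfrac{1\pm2\nu}{2}|U|^2$-type terms from the product cancel down to exactly the stated expression.

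Part (3) is then a one-line specialization: when $F=\mu$ we have $\rho=1$, $U=0$, $T=1$, $\Theta=Id$, hence $\mathcal{T}_{\nu}=Id$, so every $A^{\nu}_{ij}$ and $\widetilde{A}^{\nu}_{ij}$ vanishes and every $U$-dependent entry vanishes; the top-left block becomes $I_4$, the bottom-right block becomes $\tfrac12 I_6$ (resp. $2I_6$ for the inverse), so $\mathcal{J}_{\nu}|_{F=\mu}=I(4,6;1,1/2)$ and $\mathcal{J}^{-1}_{\nu}|_{F=\mu}=I(4,6;1,2)$. The main obstacle, such as it is, is purely bookkeeping: keeping the index pairing $ij\in\{11,22,33,12,23,31\}$ consistent between the symmetric-tensor coordinates and the $\mathbb{R}^6$ listing, and making sure the $1/\nu$ cancellation in the preliminary rewriting of $G_{\nu}$ is done correctly so that no spurious singularity at $\nu=0$ is introduced — this is the same cancellation phenomenon highlighted after Proposition \ref{Equivalence T}, and verifying it here is what guarantees $\mathcal{J}_{\nu}$ and $\mathcal{J}^{-1}_{\nu}$ depend smoothly on $\nu$ across $\nu=0$.
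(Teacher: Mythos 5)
Your proof is correct and supplies precisely the direct computation that the paper declares ``straightforward but very tedious'' and omits: rewriting $G_{\nu}=\tfrac{\rho}{2}\mathcal{T}_{\nu}+\tfrac{\rho}{6}\{(1-\nu)|U|^2\,Id+3\nu\, U\otimes U\}-\tfrac{\rho}{2}Id$ before differentiating, and then inverting via the block lower-triangular structure, is the right way to organize it. One caveat: carrying your calculation out literally gives $\partial G_{ij}/\partial\rho=\tfrac12\mathcal{T}_{ij}+\tfrac{\nu}{2}U_iU_j$ for $i\neq j$ (and a $+$ sign on the $|U|^2$-term of $\widetilde{A}_{ii}$, and $-\nu U_2/\rho$ in the $\widetilde{A}_{12}$ row), not the printed $A^{\nu}_{ij}=\tfrac{\nu}{2}(\mathcal{T}_{ij}+U_iU_j)$; these appear to be typos in the statement ($\Theta_{ij}$ seems intended where $\mathcal{T}_{ij}$ is written), so your parenthetical ``after absorbing the factor $\nu$'' glosses over a mismatch that is in fact harmless --- all such entries vanish at $F=\mu$, so part (3) and everything downstream are unaffected.
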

\begin{proof}
The proof is straightforward but very tedious. We omit the proof.
\end{proof}
%
%
%
%
%
\begin{lemma}\label{Diff Macroscopic Field}
We have\newline
$(1)$ Derivatives for $\det\mathcal{T}_{\nu}$: $( 1\leq i,j\leq 3, i\neq j )$
\begin{eqnarray*}
&&\frac{\partial\det\mathcal{T}_{\nu}}{\partial \rho}\Big|_{F=\mu}=0,\quad \frac{\partial\det\mathcal{T}_{\nu}}{\partial U_i}\Big|_{F=\mu}=0,~\cr
&&\frac{\partial\det\mathcal{T}_{\nu}}{\partial \mathcal{T}_{ii}}\Big|_{F=\mu}=1,\quad\frac{\partial\det\mathcal{T}_{\nu}}{\partial \mathcal{T}_{ij}}\Big|_{F=\mu}=0.
\end{eqnarray*}
$(2)$ Derivatives for $\mathcal{M}_{\nu}$: $( 1\leq i,j\leq 3, i\neq j )$
\begin{eqnarray*}
&&\frac{\partial\mathcal{M}_{\nu}}{\partial\rho}\Big|_{F=\mu}=\mu(v),\quad
\frac{\partial\mathcal{M}_{\nu}}{\partial U_i}\Big|_{F=\mu}=v_i\mu(v),\cr
&&\frac{\partial\mathcal{M}_{\nu}}{\partial\mathcal{T}_{ii}}\Big|_{F=\mu}=\left(\frac{v^2_i-1}{2}\right)\mu(v),\quad
\frac{\partial\mathcal{M}_{\nu}}{\partial\mathcal{T}_{ij}}\Big|_{F=\mu}= v_iv_j\mu(v).
\end{eqnarray*}
\end{lemma}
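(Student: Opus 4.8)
The plan is to differentiate the explicit formulas directly, using that at $F=\mu$ the macroscopic fields take the values $\rho=1$, $U=0$, $T=1$, $\Theta=Id$, hence $\mathcal{T}_{\nu}=Id$ and $\det\mathcal{T}_{\nu}=1$, and keeping in mind that $(\rho,U,\mathcal{T}_{\nu})$ are treated as independent variables with $\mathcal{T}_{\nu}$ parametrized by its six entries $\mathcal{T}_{11},\mathcal{T}_{22},\mathcal{T}_{33},\mathcal{T}_{12},\mathcal{T}_{23},\mathcal{T}_{31}$ (the off-diagonal entries counted once, with $\mathcal{T}_{ij}=\mathcal{T}_{ji}$).

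For part $(1)$: since $\det\mathcal{T}_{\nu}$ is a function of the matrix $\mathcal{T}_{\nu}$ alone, the $\rho$- and $U_i$-derivatives vanish identically. For the entry derivatives I would expand $\det\mathcal{T}_{\nu}=\mathcal{T}_{11}\mathcal{T}_{22}\mathcal{T}_{33}+2\mathcal{T}_{12}\mathcal{T}_{23}\mathcal{T}_{31}-\mathcal{T}_{11}\mathcal{T}_{23}^2-\mathcal{T}_{22}\mathcal{T}_{31}^2-\mathcal{T}_{33}\mathcal{T}_{12}^2$ and differentiate: $\partial_{\mathcal{T}_{ii}}\det\mathcal{T}_{\nu}=\mathcal{T}_{jj}\mathcal{T}_{kk}-\mathcal{T}_{jk}^2$ and $\partial_{\mathcal{T}_{ij}}\det\mathcal{T}_{\nu}=2\mathcal{T}_{jk}\mathcal{T}_{ki}-2\mathcal{T}_{kk}\mathcal{T}_{ij}$ for $i\neq j$ (equivalently Jacobi's formula $\partial_{\mathcal{T}_{ij}}\det\mathcal{T}_{\nu}=(\det\mathcal{T}_{\nu})(\mathcal{T}_{\nu}^{-1})_{ij}$ combined with symmetry). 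Evaluating at $\mathcal{T}_{\nu}=Id$ gives $1$ and $0$ respectively, as claimed.

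For part $(2)$: write $\mathcal{M}_{\nu}=\rho\,(2\pi)^{-3/2}(\det\mathcal{T}_{\nu})^{-1/2}e^{g}$ with $g(v)=-\tfrac12(v-U)^{\top}\mathcal{T}_{\nu}^{-1}(v-U)$. The $\rho$-derivative is $\mathcal{M}_{\nu}/\rho$, which equals $\mu$ at $F=\mu$. For $U_i$, differentiating $g$ and using the symmetry of $\mathcal{T}_{\nu}^{-1}$ gives $\partial_{U_i}g=\bigl[\mathcal{T}_{\nu}^{-1}(v-U)\bigr]_i$, hence $\partial_{U_i}\mathcal{M}_{\nu}=\mathcal{M}_{\nu}\bigl[\mathcal{T}_{\nu}^{-1}(v-U)\bigr]_i$, which at $F=\mu$ equals $v_i\mu$. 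For the entry derivatives I combine two contributions: from the prefactor, $\partial_{\mathcal{T}_{kl}}(\det\mathcal{T}_{\nu})^{-1/2}=-\tfrac12(\det\mathcal{T}_{\nu})^{-3/2}\partial_{\mathcal{T}_{kl}}\det\mathcal{T}_{\nu}$, evaluated by part $(1)$; and from $g$, using the matrix-inverse rule $\partial_{\mathcal{T}_{kl}}\mathcal{T}_{\nu}^{-1}=-\mathcal{T}_{\nu}^{-1}\bigl(\partial_{\mathcal{T}_{kl}}\mathcal{T}_{\nu}\bigr)\mathcal{T}_{\nu}^{-1}$ with $\partial_{\mathcal{T}_{ii}}\mathcal{T}_{\nu}=E_{ii}$ and $\partial_{\mathcal{T}_{ij}}\mathcal{T}_{\nu}=E_{ij}+E_{ji}$ for $i\neq j$, where $E_{kl}$ is the matrix with a $1$ in the $(k,l)$ slot and zeros elsewhere. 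At $\mathcal{T}_{\nu}=Id$ this gives $\partial_{\mathcal{T}_{ii}}g=\tfrac12 v_i^2$ and $\partial_{\mathcal{T}_{ij}}g=v_iv_j$. Adding the two contributions and using $\det\mathcal{T}_{\nu}=1$, $\rho=1$ at $F=\mu$ yields $\partial_{\mathcal{T}_{ii}}\mathcal{M}_{\nu}|_{F=\mu}=\bigl(-\tfrac12+\tfrac12 v_i^2\bigr)\mu=\tfrac{v_i^2-1}{2}\mu$ and $\partial_{\mathcal{T}_{ij}}\mathcal{M}_{\nu}|_{F=\mu}=v_iv_j\mu$.

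There is no analytic obstacle here; the only thing to watch is the bookkeeping of the symmetric parametrization of $\mathcal{T}_{\nu}$ — treating each off-diagonal entry as a single variable is exactly what produces the factor-$2$ cancellations in the determinant and in the chain rule for $\mathcal{T}_{\nu}^{-1}$ — together with the correct use of the matrix-inverse derivative formula.
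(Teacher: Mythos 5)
Your proposal is correct and takes essentially the same route as the paper: both differentiate the explicit cofactor expansion of $\det\mathcal{T}_{\nu}$ and the Gaussian directly, with the off-diagonal entries treated as single variables so that $\partial_{\mathcal{T}_{ij}}\mathcal{T}_{\nu}=E_{ij}+E_{ji}$ produces the quoted values at $\mathcal{T}_{\nu}=Id$, $\rho=1$, $U=0$. (Your determinant expansion even restores the $2\mathcal{T}_{12}\mathcal{T}_{23}\mathcal{T}_{31}$ term that the paper's displayed formula drops; since its derivatives vanish at $F=\mu$, both versions yield the same conclusion.)
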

\begin{proof}
(1) A straightforward calculation leads to the following explicit form of the determinant of $\mathcal{T}_{\nu}$:
\begin{eqnarray}\label{detT}
\begin{split}
\det\mathcal{T}_{\nu}&=\mathcal{T}_{11}\mathcal{T}_{22}\mathcal{T}_{33}-\mathcal{T}_{23}^2\mathcal{T}_{11}-\mathcal{T}_{31}^2\mathcal{T}_{22}
-\mathcal{T}^2_{12}\mathcal{T}_{33}.
\end{split}
\end{eqnarray}
Then (1) follows from explicit calculations using
\begin{eqnarray*}
\frac{\partial\mathcal{T}_{ij}}{\partial\rho}=0,~\frac{\partial\mathcal{T}_{ij}}{\partial U}=0,~
\frac{\partial\mathcal{T}_{ij}}{\partial\mathcal{T}_{\ell k}}=\left\{\begin{array}{l}1\quad (i=\ell, j=k)\\0\quad (otherwise)\end{array}\right.,
\end{eqnarray*}
and \begin{eqnarray*}
\mathcal{T}_{ij}\big|_{\mu}=\delta_{ij}.
\end{eqnarray*}
\newline
(2) We only consider $\frac{\partial\mathcal{M}}{\partial\mathcal{T}_{ij}}$. Other terms can be obtained similarly.
We first observe that
\begin{eqnarray*}
\frac{\partial\mathcal{M}_{\nu}}{\partial\mathcal{T}_{ij}}=
\left[-\frac{1}{2}\frac{1}{|\det\mathcal{T}|}\frac{\partial\det\mathcal{T}}{\partial\mathcal{T}_{ij}}
+\frac{1}{2}(v-U)\mathcal{T}^{-1}\left(\frac{\partial\mathcal{T}}{\partial\mathcal{T}_{ij}}\right)\mathcal{T}^{-1}(v-U)\right]
\mathcal{M}_{\nu},
\end{eqnarray*}
When $i=j=1$, we have
\begin{eqnarray*}
\frac{\partial\mathcal{M}_{\nu}}{\partial\mathcal{T}_{11}}\Big|_{F=\mu}&=&
\left\{-\frac{1}{2}+\frac{1}{2}v^{\top}\left(\begin{array}{ccc}1&0&0\cr0&0&0\cr0&0&0\end{array}\right)v\right\}
\mathcal{M}_{\nu}\cr
&=&\left(\frac{v^2_1-1}{2}\right)\mu.
\end{eqnarray*}
$\frac{\partial\mathcal{M}_{\nu}}{\partial\mathcal{T}_{22}}$, $\frac{\partial\mathcal{M}_{\nu}}{\partial\mathcal{T}_{33}}$ can be obtained in the same manner.
In the case $i\neq j$, we observe that
\begin{eqnarray*}
\frac{\partial\mathcal{M}_{\nu}}{\partial\mathcal{T}_{12}}\Big|_{F=\mu}&=&
\left\{0+\frac{1}{2}v^{\top}\left(\begin{array}{ccc}0&1&0\cr1&0&0\cr0&0&0\end{array}\right)v\right\}\mu\cr
&=& v_1v_2\mu.
\end{eqnarray*}
Similarly, we have
\begin{eqnarray*}
\frac{\partial\mathcal{M}}{\partial\mathcal{T}_{23}}\Big|_{F=\mu}=v_2v_3\mu~\mbox{ and }~\frac{\partial\mathcal{M}}{\partial\mathcal{T}_{31}}\Big|_{F=\mu}=v_3v_1\mu.
\end{eqnarray*}
\end{proof}
%
%
%
%
Now, we are ready to prove the main theorem of this section, which basically says that the linearized relaxation operator
is composed of $\nu$-perturbation of the projection on the macroscopic kernel and nonlinear terms.
\begin{theorem}\label{expansion} Let $F=\mu+\sqrt{\mu}f$. Then the ellipsoidal Gaussian $\mathcal{M}_{\nu}(F)$ can be expanded around $\mu$ as follows:
\begin{equation*}
\mathcal{M}_{\nu}(F)=\mu+(P_{\nu}f)\sqrt{\mu}+\sum_{1\leq i,j\leq 3}\left(\int^1_0\big\{D^2_{(\rho_{\theta},\rho_{\theta}U_{\theta},G_{\theta})}\mathcal{M}(\theta)\big\}_{ij}(1-\theta)^2d\theta\right)
\langle f,e_i\rangle_{L^2_v}\langle f,e_j\rangle_{L^2_v},
\end{equation*}
Here, $P_{\nu}$ is given by a $\nu$-perturbation of the usual macroscopic projection $P_0$:
\begin{eqnarray*}
P_{\nu}f\equiv P_0f+\nu ( P_1f+ P_2f),
\end{eqnarray*}
where
\begin{eqnarray*}\begin{split}
P_0f&=\left(\int f\sqrt{\mu} dv\right)\sqrt{\mu} +\left(\int f v\sqrt{\mu}dv\right)\cdot v\sqrt{\mu}
+\left(\int f\frac{|v|^2-3}{\sqrt{6}}\sqrt{\mu}dv\right) \frac{|v|^2-3}{\sqrt{6}}\sqrt{\mu},\cr
P_1f&=\sum^3_{i=1}\left(\int f~ \frac{3v^2_i-|v|^2}{3\sqrt{2}}\sqrt{\mu}dv\right)\frac{3v^2_i-|v|^2}{3\sqrt{2}}\sqrt{\mu},\cr
P_2f&=\sum_{i< j}\left(\int f v_iv_j\sqrt{\mu}dv\right) v_iv_j\sqrt{\mu}.
\end{split}
\end{eqnarray*}
and $\mathcal{M}_{\nu}(\theta)$ denotes
\begin{equation*}
\mathcal{M}_{\nu}(\theta)=\frac{\rho_{\theta}}{\sqrt{\det(2\pi \mathcal{T}_{\theta}})}
\exp\left(-\frac{1}{2}(v-U_{\theta})^{\top}\mathcal{T}^{-1}_{\theta}(v-U_{\theta})\right),
\end{equation*}
where the transitional macroscopic fields $\rho_{\theta}$, $U_{\theta}$, $G_{\theta}$ and $\mathcal{T}_{\theta}$ are defined by
\begin{eqnarray*}
\rho_{\theta}=\theta\rho+(1-\theta), ~ \rho_{\theta}U_{\theta}=\theta \rho U, ~\mbox{and }~  G_{\theta}=\theta G,
\end{eqnarray*}
and
\begin{eqnarray*}
\mathcal{T}_{\theta}=\left(
\begin{array}{ccc}
(1-\nu) T_{\theta}+\nu\Theta_{\theta11}&\nu\Theta_{\theta12}&\nu\Theta_{\theta13}\cr
\nu\Theta_{\theta21}&(1-\nu) T_{\theta}+\nu\Theta_{\theta22}&\nu\Theta_{\theta23}\cr
\nu\Theta_{\theta31}&\nu\Theta_{\theta32}&(1-\nu) T_{\theta}+\nu\Theta_{\theta33}
\end{array}
\right).
\end{eqnarray*}
\end{theorem}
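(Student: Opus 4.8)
The key structural observation, already anticipated by the introduction of $G_\nu$, is that $\mathcal{M}_\nu$ depends on $f$ only through the macroscopic fields, and that while the triple $(\rho,U,\mathcal{T}_\nu)$ is not affine in $f$ (the velocity and the temperature involve quotients), the triple $y:=(\rho,\rho U,G_\nu)$ \emph{is}. So the plan is to Taylor-expand $\mathcal{M}_\nu$ in the variable $y$. First I would record, using the definitions of $\rho,U,T,\Theta$ and the elementary identities $3\rho T+\rho|U|^2=\int F|v|^2\,dv$ and $\rho\Theta+\rho U\otimes U=\int Fv\otimes v\,dv$, that
\begin{eqnarray*}
&&\rho=1+\langle f,\sqrt{\mu}\rangle_{L^2_v},\qquad \rho U_i=\langle f,v_i\sqrt{\mu}\rangle_{L^2_v},\cr
&&G_\nu=\frac{1-\nu}{6}\Big(\int F|v|^2\,dv\Big)Id+\frac{\nu}{2}\int Fv\otimes v\,dv-\frac12\Big(\int F\,dv\Big)Id.
\end{eqnarray*}
In particular each of the ten components of $y$ is affine in $f$, and writing the linear part of its $k$-th component as $\langle f,e_k\rangle_{L^2_v}$ identifies the functions $e_k$ appearing in the statement: the collision invariants $\sqrt{\mu},\ v\sqrt{\mu},\ |v|^2\sqrt{\mu}$ together with the Burnett-type functions $v_iv_j\sqrt{\mu}$.

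Next I would regard $\mathcal{M}_\nu$ as a smooth function of $y$ in a neighbourhood of $y|_{F=\mu}$ — legitimate since $\mathcal{J}_\nu=\partial y/\partial(\rho,U,\mathcal{T}_\nu)$ is invertible with the explicit inverse given in Lemma~\ref{Jacobian}, as long as the fields remain in the range where $\mathcal{T}_\nu$ is positive definite (cf.\ Proposition~\ref{Equivalence T}) — and apply Taylor's formula with integral remainder, to second order, along the segment $\theta\mapsto y_\theta:=y|_{F=\mu}+\theta\big(y-y|_{F=\mu}\big)$. This segment is exactly the one described by the transitional fields $\rho_\theta,U_\theta,G_\theta,\mathcal{T}_\theta$ of the statement. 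The zeroth-order term is $\mathcal{M}_\nu|_{F=\mu}=\mu$, which is the computation already carried out in the introduction; and, since $y_\theta$ is affine in $f$, the second-order remainder is precisely the quadratic-in-$f$ integral expression displayed in the statement, whose coefficients are (for fixed $f$) Schwartz functions of $v$.

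It then remains to identify the first-order term $\sum_k\big(\partial_{y_k}\mathcal{M}_\nu|_{F=\mu}\big)\langle f,e_k\rangle_{L^2_v}$ with $(P_\nu f)\sqrt{\mu}$, where $P_\nu=P_0+\nu(P_1+P_2)$. By the chain rule $\partial_y\mathcal{M}_\nu$ is obtained from $\partial_{(\rho,U,\mathcal{T}_\nu)}\mathcal{M}_\nu$ through multiplication by $\mathcal{J}_\nu^{-1}$, and Lemma~\ref{Jacobian}(3) tells us that this matrix is diagonal at $F=\mu$, equal to $I(4,6;1,2)$; combined with the first derivatives from Lemma~\ref{Diff Macroscopic Field}(2) this gives
\begin{eqnarray*}
&&\frac{\partial\mathcal{M}_\nu}{\partial\rho}\Big|_{F=\mu}=\mu,\qquad\frac{\partial\mathcal{M}_\nu}{\partial(\rho U_i)}\Big|_{F=\mu}=v_i\mu,\cr
&&\frac{\partial\mathcal{M}_\nu}{\partial G_{ii}}\Big|_{F=\mu}=(v_i^2-1)\mu,\qquad\frac{\partial\mathcal{M}_\nu}{\partial G_{ij}}\Big|_{F=\mu}=2v_iv_j\mu\quad(i\neq j).
\end{eqnarray*}
Substituting these, together with the linear parts of $\rho,\rho U_i,G_{ij}$ found in the first step, and pulling out one factor of $\sqrt{\mu}$: the $\rho$- and $\rho U$-contributions reproduce the first two terms of $P_0f$; the off-diagonal $G$-contributions collapse to $\nu P_2f$; and the diagonal $G$-contributions split — using the identities $\sum_i(v_i^2-1)=|v|^2-3$, $\sum_i(3v_i^2-|v|^2)=0$ and $v_i^2=\frac{3v_i^2-|v|^2}{3}+\frac{|v|^2}{3}$ — into a $\nu$-independent part equal to the energy term of $P_0f$ plus exactly $\nu P_1f$. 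This last piece of bookkeeping is the step I expect to be the main obstacle: it is elementary but delicate, the essential points being to check that all the $\nu$-dependent contributions assemble exactly into $\nu(P_1f+P_2f)$ — so that at $\nu=0$ one recovers the classical BGK projection with no residue — and that the normalizing constants $\frac{1}{3\sqrt2}$ and $\frac{1}{\sqrt6}$ of the Burnett and energy modes come out right. Collecting the three contributions yields $(P_0f+\nu P_1f+\nu P_2f)\sqrt{\mu}$, which is the asserted expansion.
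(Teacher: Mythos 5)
Your proposal is correct and follows essentially the same route as the paper: Taylor expansion with integral remainder of $\mathcal{M}_\nu$ along the segment in the affine-in-$f$ variables $(\rho,\rho U,G_\nu)$, identification of the first-order term via the chain rule with $\mathcal{J}_\nu^{-1}|_{F=\mu}=I(4,6;1,2)$ and Lemma~\ref{Diff Macroscopic Field}, and the same algebraic reassembly of the diagonal $G$-contributions using $\sum_i(3v_i^2-|v|^2)=0$ to split off $\nu P_1f$ from the energy mode of $P_0f$. The bookkeeping step you flag as the main obstacle is exactly the computation (the terms $A$ and $B$) carried out in the paper's proof.
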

\begin{proof}
We define $g(\theta)$ as
\begin{eqnarray*}
g(\theta)&=&\mathcal{M}\big(~\theta(\rho,\rho U, G)+(1-\theta)\left(1,0^3,0^6\right)\big)\cr
&=&\mathcal{M}\left(\rho_{\theta},\rho_{\theta}U_{\theta},G_{\theta}\right).
\end{eqnarray*}
Note that $g(\theta)$ represents the transition from the global Maxwellian $\mu(v)$ to the ellipsoidal Gaussian $\mathcal{M}_{\nu}(F)$.
Then we have from the Taylor's theorem
\begin{eqnarray}\label{Taylor Expansion}
g(1)=g(0)+g^{\prime}(0)+\int^1_0g^{\prime\prime}(\theta)(1-\theta)^2d\theta.
\end{eqnarray}
The first term in the right hand side is the global Maxwellian: $g(0)=\mu$.
We now consider the second and the third terms:\newline\newline
(i) $g^{\prime}(0)$:  We observe from Lemma \ref{Diff Macroscopic Field} that
\begin{eqnarray*}\begin{split}
D_{(\rho, U, \mathcal{T}_{\nu})}\mathcal{M}_{\nu}(0)
&=\Big(\frac{\partial\mathcal{M}_{\nu}}{\partial \rho}, \frac{\partial\mathcal{M}_{\nu}}{\partial U},
\frac{\partial\mathcal{M}_{\nu}}{\partial \mathcal{T}_{\nu}}\Big)\Big|_{F=\mu}\cr
&=\Big(1,v,\frac{v^2_1-1}{2}, \frac{v^2_2-1}{2}, \frac{v^2_3-1}{2},  v_1v_2,  v_2v_3,  v_3v_1\Big)\mu(v).
\end{split}
\end{eqnarray*}
Then, using the identities in Lemma \ref{Jacobian} and Lemma \ref{Diff Macroscopic Field}, $g^{\prime}(0)$ can be represented as
\begin{eqnarray*}
g^{\prime}(0)&=&\frac{d}{d\theta}\mathcal{M}\big(\theta(\rho,\rho U, G)+(1-\theta)(1,0,0,0,0^6)\big)\Big|_{\theta=0}\cr
&=&(\rho-1,\rho U, G)^{\top}\cdot \mathcal{J}^{-1}_{\theta}
D_{(\rho_{\theta},U_{\theta},\mathcal{T}_{\theta})}\mathcal{M}\Big|_{\theta=0}\cr
&=&(\rho-1,\rho U, G)^{\top}\cdot \mathcal{J}^{-1}\cr
&&\times\left(1, v,\frac{v^2_1-1}{2},\frac{v^2_3-1}{2},\frac{v^2_3-1}{2},  v_1v_2,  v_2v_3,  v_3v_1\right)\mu\cr
&=&\left(\int f\sqrt{\mu}dv\right)\mu+\left(\int fv\sqrt{\mu}dv\right)\cdot v\mu\cr
&+&2\sum_{i=1}^3G_{ii}\left(\frac{v^2_i-1}{2}\right)\mu
+2\sum_{i<j}G_{ij}v_iv_j\mu.
\end{eqnarray*}
Here $\mathcal{J}_{\theta}$ denotes $\frac{\partial(\rho_{\theta} U_{\theta}, G_{\theta})}{\partial(\rho_{\theta} U_{\theta},\mathcal{T}_{\theta})}$
and we used $\mathcal{J}_{0}=\mathcal{J}$.\newline
(ii) $g^{\prime\prime}(\theta)$: By an explicit computation, we find
\begin{eqnarray*}
~g^{\prime\prime}(\theta)&=&\frac{d^2\mathcal{M}}{d\theta^2}(\theta(\rho-1,\rho U, G)+(1-\theta)(1,0,0^6))\cr
&=&(\rho-1,\rho U, G)^{\top}\left\{D^2_{\big(\rho_{\theta},\rho_{\theta} U_{\theta}, G_{\theta})}\mathcal{M}(\theta)\right\}(\rho-1,\rho U, G).
\end{eqnarray*}
(iii) We claim that
\[
g^{\prime}(0)=P_{\nu}f\sqrt{\mu}.
\]
Note that it is enough to establish
\begin{eqnarray*}
2\sum_{i=1}^3G_{ii}\left(\frac{v^2_i-1}{2}\right)\sqrt{\mu}&=&\left(\int f\frac{|v|^2-3}{\sqrt{6}}\sqrt{\mu}dv\right) \frac{|v|^2-3}{\sqrt{6}}\sqrt{\mu}\cr
&+&\sum^3_{i=1}\left(\int f~ \frac{3v^2_i-|v|^2}{3\sqrt{2}}\sqrt{\mu}dv\right)\frac{3v^2_i-|v|^2}{3\sqrt{2}}\sqrt{\mu}.
\end{eqnarray*}
We first observe that $G_{ii}$ $(i=1,2,3)$ can be decomposed as
\begin{eqnarray*}
G_{ii}&=&\frac{1-\nu}{3}\int_{\mathbb{R}^3} f \frac{|v|^2}{2}dv+\nu\int_{\mathbb{R}^3} f \frac{v^2_i}{2}dv-\int_{\mathbb{R}^3} \frac{1}{2}fdv\cr
&=&\int_{\mathbb{R}^3} f\left\{\left(\frac{|v|^2-3}{6}\right)+\nu\left(\frac{3v_i^2-|v|^2}{6}\right)\right\}\sqrt{\mu}dv,
\end{eqnarray*}
so that
\begin{eqnarray*}
\begin{split}
\hspace{0.2cm} 2\sum_{i=1}^3\int_{\mathbb{R}^3} fG_{ii} \frac{v^2_i-1}{2}\sqrt{\mu}
&=2\sum_{i=1}^3\left(\int_{\mathbb{R}^3}f\left[\frac{|v|^2-3}{6}+\nu \left(\frac{3v_i^2-|v|^2}{6}\right)\right]\sqrt{\mu}dv\right)
\frac{v^2_i-1}{2}\sqrt{\mu}\cr
&=2\sum_{i=1}^3\left(\int_{\mathbb{R}^3}f\frac{|v|^2-3}{6}\sqrt{\mu}dv\right)\frac{v^2_i-1}{2}\sqrt{\mu}\cr
&+2\nu\sum_{i=1}^3\left(\int_{\mathbb{R}^3}f\frac{3v_i^2-|v|^2}{6}\sqrt{\mu}dv\right)
\frac{v^2_i-1}{2}\sqrt{\mu}\cr
&+A+B.
\end{split}
\end{eqnarray*}
We compute $A$ as
\begin{eqnarray*}
A&=&2\left(\int_{\mathbb{R}^3}f\frac{|v|^2-3}{6}\sqrt{\mu}dv\right)\sum^3_{i=1}\left(\frac{v^2_i-1}{2}\right)\sqrt{\mu}\cr
&=&2\left(\int_{\mathbb{R}^3}f\frac{|v|^2-3}{6}\sqrt{\mu}dv\right)\left(\frac{|v|^2-3}{2}\right)\sqrt{\mu}\cr
&=&\left(\int_{\mathbb{R}^3}f\frac{|v|^2-3}{\sqrt{6}}\sqrt{\mu}dv\right)\left(\frac{|v|^2-3}{\sqrt{6}}\right)\sqrt{\mu}.
\end{eqnarray*}
For $B$, we observe that
\[
\frac{v^2_i-1}{2}=\frac{3v^2_i-|v|^2}{6}+\frac{|v|^2-3}{6},
\]
and
\[
\sum^3_{i=1}\frac{3v^2_i-|v|^2}{6}=0,
\]
to derive
\begin{eqnarray*}
B&=&2\sum^3_{i=1}\left(\int_{\mathbb{R}^3}f\frac{3v^2_i-|v|^2}{6}\sqrt{\mu}dv\right)\left(\frac{3v^2_i-|v|^2}{6}+\frac{|v|^2-3}{6}\right)\sqrt{\mu}\cr
&=&2\sum^3_{i=1}\left(\int_{\mathbb{R}^3}f\frac{3v^2_i-|v|^2}{6}\sqrt{\mu}dv\right)\left(\frac{3v^2_i-|v|^2}{3}\right)\sqrt{\mu}\cr
&+&2\sum^3_{i=1}\left(\int_{\mathbb{R}^3}f\frac{3v^2_i-|v|^2}{6}\sqrt{\mu}dv\right)\left(\frac{|v|^2-3}{6}\right)\sqrt{\mu}\cr
&=&\sum^3_{i=1}\left(\int_{\mathbb{R}^3}f\frac{3v^2_i-|v|^2}{3\sqrt{2}}\sqrt{\mu}dv\right)\left(\frac{3v^2_i-|v|^2}{3\sqrt{2}}\right)\sqrt{\mu}\cr
&+&2\Big(\int_{\mathbb{R}^3}f\underbrace{\sum^3_{i=1}\frac{3v^2_i-|v|^2}{6}}_{=0}\sqrt{\mu}dv\Big)\left(\frac{|v|^2-3}{6}\right)\sqrt{\mu}\cr
&=&\sum^3_{i=1}\left(\int_{\mathbb{R}^3}f\frac{3v^2_i-|v|^2}{3\sqrt{2}}\sqrt{\mu}dv\right)\left(\frac{3v^2_i-|v|^2}{3\sqrt{2}}\right)\sqrt{\mu}.
\end{eqnarray*}
Plugging (i), (ii), (iii) into \ref{Taylor Expansion}, we obtained the desired result.
\end{proof}
We now consider the linearization of the collision frequency.
%
%
%
%

\begin{lemma}The collision frequency $A_{\nu}$ can be linearized around the normalized global Maxwellian as follows:
\begin{equation*}
A_{\nu}=\frac{1}{1-\nu}+\frac{1}{1-\nu}A_p,
\end{equation*}
where
\begin{equation*}
A_p=\left\{\int^1_0\mathcal{J}^{-1}_{\theta}(T_{\theta},0^3, 1/3\rho_{\theta}Id)(1-\theta)d\theta\right\}\cdot(\rho-1,\rho U, G).
\end{equation*}
\end{lemma}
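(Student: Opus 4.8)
The plan is to imitate exactly the Taylor-expansion strategy used for $\mathcal{M}_\nu$ in Theorem~\ref{expansion}, but now applied to the scalar quantity $A_\nu=\rho T/(1-\nu)$. The starting observation is that $\rho T$ is \emph{not} one of the coordinates $(\rho,\rho U,G_\nu)$, so I must first express $\rho T$ as a smooth function of those coordinates near $F=\mu$, differentiate through the change of variables using $\mathcal{J}_\nu$ from Lemma~\ref{Jacobian}, and then invoke Taylor's theorem with integral remainder. Concretely, set $h(\theta)=A_\nu\big(\theta(\rho,\rho U,G)+(1-\theta)(1,0^3,0^6)\big)$, so that $h(1)=A_\nu(F)$ and $h(0)=A_\nu(\mu)=1/(1-\nu)$ since at $F=\mu$ one has $\rho=1$, $T=1$. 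Then Taylor's theorem at order one gives
\begin{equation*}
h(1)=h(0)+h'(0)+\int_0^1 h''(\theta)(1-\theta)\,d\theta,
\end{equation*}
and the claim amounts to showing $h'(0)=0$ and identifying the remainder with $\frac{1}{1-\nu}A_p$.

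For the first-derivative term, I would compute $\partial (\rho T)/\partial(\rho,U,\mathcal T_\nu)$ at $F=\mu$. Since $3T=\mathrm{tr}\,\mathcal T_\nu=\mathcal T_{11}+\mathcal T_{22}+\mathcal T_{33}$, we have $\rho T=\tfrac{\rho}{3}(\mathcal T_{11}+\mathcal T_{22}+\mathcal T_{33})$, whence $\partial(\rho T)/\partial\rho|_\mu=1$ (using $T|_\mu=1$), $\partial(\rho T)/\partial U_i|_\mu=0$, $\partial(\rho T)/\partial\mathcal T_{ii}|_\mu=\tfrac13$, $\partial(\rho T)/\partial\mathcal T_{ij}|_\mu=0$ for $i\neq j$; this is the vector $(T,0^3,\tfrac13\rho\,Id)$ evaluated at $\mu$, which is where the expression $(T_\theta,0^3,\tfrac{1}{3}\rho_\theta Id)$ in the statement comes from. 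Applying the chain rule through $\mathcal J^{-1}$ and then pairing against $(\rho-1,\rho U,G)$, as in step (i) of the proof of Theorem~\ref{expansion}, the $\rho$-component contributes $(\rho-1)\cdot 1$ and the $G$-components contribute (after using $\mathcal J^{-1}|_\mu=I(4,6;1,2)$ and $G_{ii}=\int f\{(|v|^2-3)/6+\nu(3v_i^2-|v|^2)/6\}\sqrt\mu\,dv$) a term proportional to $\sum_i G_{ii}$; but $\sum_i(3v_i^2-|v|^2)=0$, so $\sum_i G_{ii}=\tfrac{1-\nu}{2}\int f(|v|^2-3)\sqrt\mu\,dv$... here I need to be slightly careful: what actually must vanish is $h'(0)$ after the full contraction. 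In fact $h'(0)=\tfrac{1}{1-\nu}[(\rho-1)+ \tfrac{2}{3}\sum_i G_{ii}]$, and since $\rho-1=\int f\sqrt\mu\,dv$ and $\tfrac23\sum_i G_{ii}=\tfrac{2(1-\nu)}{3}\cdot\tfrac32\int f\frac{|v|^2-3}{3}\sqrt\mu\,dv$, these do not cancel in general — so I expect that in fact the honest statement keeps a linear-in-$f$ term and the definition of $A_p$ above is the \emph{full} first-order-and-beyond remainder written with an integral-remainder Taylor expansion to order zero, i.e. $h(1)=h(0)+\int_0^1 h'(\theta)\,d\theta$ rather than order one. Reconciling the stated form $A_p=\{\int_0^1\mathcal J_\theta^{-1}(T_\theta,0^3,\tfrac13\rho_\theta Id)(1-\theta)d\theta\}\cdot(\rho-1,\rho U,G)$ with the correct Taylor order is the first thing I would pin down; the weight $(1-\theta)$ signals a genuine order-one expansion, so presumably $h'(0)$ \emph{does} vanish and I am miscounting a factor — I would recheck $\partial(\rho T)/\partial\rho|_\mu$ remembering $T$ itself depends on $\mathcal T_{ii}$ only, not on $\rho$, so the $\rho$-derivative of $\rho T$ at $\mu$ is genuinely $T|_\mu=1$, while the $\mathcal T_{ii}$-contribution carries the remaining piece; the cancellation must then come from the precise entries of $\mathcal J^{-1}$ acting on $(\rho-1,\rho U,G)$ together with the relation $\rho-1 = \int f\sqrt\mu\,dv$ and the structure of $G$.

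Granting that bookkeeping, the remaining steps are routine: (a) write $h''(\theta)=(\rho-1,\rho U,G)^\top D^2_{(\rho_\theta,\rho_\theta U_\theta,G_\theta)}(\rho_\theta T_\theta)(\rho-1,\rho U,G)$ exactly as $g''$ was handled in step (ii) of Theorem~\ref{expansion}; (b) observe that $\rho_\theta T_\theta=\tfrac{\rho_\theta}{3}\mathrm{tr}\,\mathcal T_\theta$ is \emph{linear} in each of $\rho_\theta$ and $\mathcal T_\theta$ separately, so its Hessian in $(\rho,\mathcal T)$-variables is constant and simple, and the only $\theta$-dependence in $h''$ comes through the Jacobian factors $\mathcal J_\theta^{-1}$ converting from $(\rho,\rho U,G)$-derivatives to $(\rho,U,\mathcal T)$-derivatives; (c) collect the $(1-\theta)$-weighted integral of this into the single bracketed expression defining $A_p$, using $\mathcal J_\theta=\partial(\rho_\theta,\rho_\theta U_\theta,G_\theta)/\partial(\rho_\theta,U_\theta,\mathcal T_\theta)$ and $\mathcal J_0=\mathcal J_\nu$. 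The main obstacle, as flagged, is purely combinatorial: correctly tracking which Taylor order is intended and verifying that the first-order term vanishes (or is absorbed) so that the compact formula for $A_p$ in the statement is literally correct; once the linear term is understood, the rest is the same explicit differentiation already carried out, in more elaborate form, for $\mathcal M_\nu$.
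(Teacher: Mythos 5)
Your approach is the same as the paper's: Taylor-expand $A_\nu=\rho T/(1-\nu)$ along the segment $\theta\mapsto(\rho_\theta,\rho_\theta U_\theta,G_\theta)$, convert derivatives via the chain rule $D_{(\rho,\rho U,G)}=\mathcal{J}^{-1}D_{(\rho,U,\mathcal{T}_\nu)}$, and compute $D_{(\rho,U,\mathcal{T}_\nu)}(\rho T)=(T,0^3,\tfrac13\rho\,Id)$ from $\rho T=\tfrac{\rho}{3}\,tr\,\mathcal{T}_\nu$; the paper's proof consists of exactly these steps and nothing more. The one point you leave unresolved should be settled in the opposite direction from your guess: $h'(0)$ does \emph{not} vanish. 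Your computation $h'(0)=\tfrac{1}{1-\nu}\big[(\rho-1)+\tfrac23\sum_iG_{ii}\big]=\tfrac{1}{1-\nu}\int f\,\tfrac{|v|^2}{3}\sqrt{\mu}\,dv$ is correct, and this is nonzero pointwise in $x$ (it vanishes only after integration in $x$). The expansion actually being used is the zeroth-order one, $h(1)=h(0)+\int_0^1h'(\theta)\,d\theta$, whose integrand is precisely the first derivative contracted once with $(\rho-1,\rho U,G)$; the weight $(1-\theta)$ in the statement (and in the paper's one-line proof) is spurious, an artifact carried over from the second-order remainder used for $\mathcal{M}_\nu$, since integration by parts gives $\int_0^1h'(\theta)(1-\theta)\,d\theta=-h(0)+\int_0^1h(\theta)\,d\theta\neq h(1)-h(0)$ in general. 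This slip is harmless downstream, because only the structural form $A_p=\sum_i\mathcal{Q}^{A_\nu}_i\langle f,e_i\rangle$ subject to $(\mathcal{H}_{A_\nu})$ is ever used. With that correction your argument closes, and the second-derivative bookkeeping in your steps (a)--(c) is not needed at all.
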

\begin{proof}
We expand $A_{\nu}$ by the Taylor's theorem. Then the second term reads
\begin{equation*}
\left\{\int^1_0D_{(\rho_{\theta},\rho_{\theta}U_{\theta},G_{\theta})}(\rho_{\theta}T_{\theta})(1-\theta)d\theta\right\}\cdot(\rho-1,\rho U, G).
\end{equation*}
Note that
\begin{equation*}
D_{(\rho_{\theta},\rho_{\theta}U_{\theta},G_{\theta})}=\mathcal{J}^{-1}_{\theta}D_{(\rho_{\theta},U_{\theta},\mathcal{T}_{\theta})},
\end{equation*}
to see
\begin{eqnarray*}
A_p&=&\left\{\int^1_0\mathcal{J}^{-1}_{\theta}D_{(\rho_{\theta},U_{\theta},\mathcal{T}_{\theta})}(\rho_{\theta}T_{\theta})(1-\theta)d\theta\right\}
\cdot(\rho-1,\rho U, G)\cr
&=&\left\{\int^1_0\mathcal{J}^{-1}_{\theta}(T_{\theta},0^3, 1/3\rho_{\theta}Id)(1-\theta)d\theta\right\}\cdot(\rho-1,\rho U, G).
\end{eqnarray*}
\end{proof}
Instead of writing down $D^2_{(\rho_{\theta}, \rho_{\theta} U_{\theta}, G_{\theta})}$ explicitly,
we introduce generic notations which considerably simplify the argument. We first observe that
\begin{eqnarray*}
D^2_{(\rho_{\theta}, \rho_{\theta} U_{\theta}, G_{\theta})}\mathcal{M}(\theta)=
\mathcal{J}^{-1}_{\theta}D_{(\rho_{\theta}, U_{\theta}, \mathcal{T}_{\theta})}\mathcal{J}^{-1}_{\theta}
D_{(\rho_{\theta}, U_{\theta}, \mathcal{T}_{\theta})}
\mathcal{M}(\theta).
\end{eqnarray*}
We then invoke Lemma \ref{Jacobian} to conclude the following lemma.
\begin{lemma}
There exist generic polynomials $P^{\mathcal{M}}_{i,j}$, $R^{\mathcal{M}}_{i,j}$ such that
\begin{eqnarray*}
&&(\rho-1,\rho U, G)^{\top}\left\{D^2_{(\rho_{\theta},\rho_{\theta}U_{\theta},G_{\theta})}\mathcal{M}(\theta)\right\}(\rho-1,\rho U, G)\cr
&&\qquad=\sum_{i,j}\frac{P^{\mathcal{M}}_{i,j}(\rho_{\theta}, v-U_{\theta},U_{\theta}, \mathcal{T}^{-1}_{\theta},\nu)}{R^{\mathcal{M}}_{i,j}(\rho_{\theta},\det\mathcal{T}_{\theta})}
\exp\left(-\frac{1}{2}(v-U_{\theta})^{\top}\mathcal{T}^{-1}_{\theta}(v-U_{\theta})\right)\langle f,e_i\rangle\langle f,e_j\rangle,
\end{eqnarray*}
where $P^{\mathcal{M}}_{i,j}(x_1,\ldots,x_n)$ and $R^{\mathcal{M}}_{i,j}(x_1,\ldots, x_n)$ satisfy the following structural assumptions $(\mathcal{H}_{\mathcal{M}})$:
\begin{itemize}
\item $(\mathcal{H}_{\mathcal{M}_{\nu}}1)$ $P^{\mathcal{M}_{\nu}}_{i,j}$ is a polynomial such that $P_{i,j}(0,0,\ldots, 0)=0.$
\item $(\mathcal{H}_{\mathcal{M}_{\nu}}2)$ $R^{\mathcal{M}_{\nu}}_{i,j}$ is a monomial.
\end{itemize}
\end{lemma}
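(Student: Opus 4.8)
The plan is to argue by \emph{closure under the relevant operations} rather than to write $D^2_{(\rho_\theta,\rho_\theta U_\theta,G_\theta)}\mathcal{M}(\theta)$ out in coordinates, which would be as unwieldy as the computations behind the preceding lemmas. Introduce the class $\mathcal{C}$ of functions of $(\rho_\theta,U_\theta,\mathcal{T}_\theta,v)$ of the form
\[
\frac{P(\rho_\theta,\,v-U_\theta,\,U_\theta,\,\mathcal{T}^{-1}_\theta,\,\nu)}{R(\rho_\theta,\det\mathcal{T}_\theta)}\,
\exp\!\Big(-\tfrac12(v-U_\theta)^\top\mathcal{T}^{-1}_\theta(v-U_\theta)\Big),
\]
with $P$ a polynomial in the indicated arguments and $R$ a monomial in $\rho_\theta$ and $\det\mathcal{T}_\theta$ (powers allowed to be negative or half-integral). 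The assertion of the lemma is then: every entry of $D^2_{(\rho_\theta,\rho_\theta U_\theta,G_\theta)}\mathcal{M}(\theta)$ lies in $\mathcal{C}$, and the numerators can be arranged to vanish at the origin.

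I would proceed in three steps. (i) $\mathcal{M}(\theta)\in\mathcal{C}$, since $\mathcal{M}(\theta)=(2\pi)^{-3/2}\rho_\theta(\det\mathcal{T}_\theta)^{-1/2}\exp(\cdots)$. (ii) $\mathcal{C}$ is stable under each of $\partial_{\rho_\theta}$, $\partial_{U_{\theta,k}}$ and $\partial_{\mathcal{T}_{\theta,kl}}$; this rests only on $\partial_{\rho_\theta}\mathcal{M}(\theta)=\rho_\theta^{-1}\mathcal{M}(\theta)$, $\partial_{U_{\theta,k}}\mathcal{M}(\theta)=\big(\mathcal{T}^{-1}_\theta(v-U_\theta)\big)_k\mathcal{M}(\theta)$,
\[
\partial_{\mathcal{T}_{\theta,kl}}\mathcal{M}(\theta)=\Big(-\tfrac12(\mathcal{T}^{-1}_\theta)_{lk}+\tfrac12(v-U_\theta)^\top\mathcal{T}^{-1}_\theta E_{kl}\mathcal{T}^{-1}_\theta(v-U_\theta)\Big)\mathcal{M}(\theta),
\]
together with $\partial_{\mathcal{T}_{\theta,kl}}\mathcal{T}^{-1}_\theta=-\mathcal{T}^{-1}_\theta E_{kl}\mathcal{T}^{-1}_\theta$ and $\partial_{\mathcal{T}_{\theta,kl}}\det\mathcal{T}_\theta=\det\mathcal{T}_\theta\,(\mathcal{T}^{-1}_\theta)_{lk}$ (here $E_{kl}$ is the constant symmetric matrix attached to the entry $(k,l)$): in each case the prefactor produced is a polynomial in $\mathcal{T}^{-1}_\theta$ and $v-U_\theta$, the only new denominators are powers of $\rho_\theta$ and $\det\mathcal{T}_\theta$, and the exponential is left intact. (iii) $\mathcal{C}$ is stable under multiplication by the entries of $\mathcal{J}^{-1}_\theta$: by Lemma \ref{Jacobian} these entries are polynomials in $U_\theta$, $\mathcal{T}_{\theta,ij}$ and $\nu$ over a power of $\rho_\theta$, and the $3\times3$ cofactor identity $\mathcal{T}_{\theta,ij}=\det\mathcal{T}_\theta\,[\mathrm{adj}(\mathcal{T}^{-1}_\theta)]_{ij}$ converts each of them into a polynomial in $U_\theta$, $\mathcal{T}^{-1}_\theta$, $\nu$ divided by a monomial in $\rho_\theta$ and $\det\mathcal{T}_\theta$ — precisely the $P/R$ part of $\mathcal{C}$. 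Feeding (i)--(iii) into the already-recorded identity $D^2_{(\rho_\theta,\rho_\theta U_\theta,G_\theta)}\mathcal{M}(\theta)=\mathcal{J}^{-1}_\theta D_{(\rho_\theta,U_\theta,\mathcal{T}_\theta)}\,\mathcal{J}^{-1}_\theta D_{(\rho_\theta,U_\theta,\mathcal{T}_\theta)}\mathcal{M}(\theta)$ and expanding by the Leibniz rule, every entry becomes a finite sum of members of $\mathcal{C}$, hence a member of $\mathcal{C}$; this is $(\mathcal{H}_{\mathcal{M}_\nu}2)$. For $(\mathcal{H}_{\mathcal{M}_\nu}1)$ it then suffices to observe that, once an entry is written as $P/R$ times the exponential, one may replace $R$ by $\rho_\theta R$ and $P$ by $\rho_\theta P$ without altering the function, so that $P(0,0,\dots,0)=0$.

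The part that actually demands care is the bookkeeping hidden in (ii)--(iii): across the $10\times10$ Hessian and the product-rule expansion one must track the exact powers of $\rho_\theta$ and of $\det\mathcal{T}_\theta$ that migrate into the denominator — in particular, checking that the $\det\mathcal{T}_\theta$ manufactured by the substitution $\mathcal{T}_{\theta,ij}\mapsto\det\mathcal{T}_\theta\,[\mathrm{adj}(\mathcal{T}^{-1}_\theta)]_{ij}$ and the $(\det\mathcal{T}_\theta)^{-1/2}$ carried by $\mathcal{M}(\theta)$ always combine into a single monomial, and that nothing other than powers of $\rho_\theta$ and $\det\mathcal{T}_\theta$ is ever left in a denominator (this is exactly why one keeps $\mathcal{T}^{-1}_\theta$ unexpanded in $P$ rather than substituting $\mathcal{T}^{-1}_\theta=\mathrm{adj}(\mathcal{T}_\theta)/\det\mathcal{T}_\theta$). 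There is no analytic content here, only this algebraic accounting, so I would present the proof at the level of the three closure properties and, as with Lemma \ref{Jacobian}, suppress the explicit expansion.
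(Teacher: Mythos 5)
Your argument is correct and follows the same route the paper takes: the paper offers no proof beyond observing the factorization $D^2_{(\rho_{\theta},\rho_{\theta}U_{\theta},G_{\theta})}\mathcal{M}(\theta)=\mathcal{J}^{-1}_{\theta}D_{(\rho_{\theta},U_{\theta},\mathcal{T}_{\theta})}\mathcal{J}^{-1}_{\theta}D_{(\rho_{\theta},U_{\theta},\mathcal{T}_{\theta})}\mathcal{M}(\theta)$ and invoking Lemma \ref{Jacobian}, and your closure-class formulation (with the adjugate identity to convert the $\mathcal{T}_{\theta,ij}$ appearing in $\mathcal{J}^{-1}_{\theta}$ into polynomials in $\mathcal{T}^{-1}_{\theta}$ over a power of $\det\mathcal{T}_{\theta}$) is a careful and valid way of carrying out exactly that bookkeeping. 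No gaps.
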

\begin{lemma}
There exist generic polynomials $P^{A_{\nu}}_{i}$, $R^{A_{\nu}}_{i}$ such that
\begin{eqnarray*}
\left\{\mathcal{J}_{\theta}^{-1}(T_{\theta},0^3,1/3\rho_{\theta}Id)\right\}\cdot(\rho-1,\rho U, G)
=\sum_{i}\frac{P^{A_{\nu}}_{i}(\rho_{\theta},U_{\theta}, \mathcal{T}_{\nu\theta},\nu)}{R^{A^{\nu}}_{i}(\rho_{\theta})}
\langle f,e_i\rangle,
\end{eqnarray*}
where $P^{A^{\nu}}_{i,j}(x_1,\ldots,x_n)$ and $R^{A^{\nu}}_{i,j}(x_1,\ldots, x_n)$ satisfy the following structural assumptions $(\mathcal{H}_{A_{\nu}})$.
\begin{itemize}
\item $(\mathcal{H}_{A_{\nu}}1)$ $P^{A_{\nu}}_{i}$ is a polynomial such that $P_{i,j}(0,0,\ldots, 0)=0.$
\item $(\mathcal{H}_{A_{\nu}}2)$ $
R^{A_{\nu}}_{i}$ is a monomial.
\end{itemize}
\end{lemma}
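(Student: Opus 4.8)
The plan is to read the asserted structure straight off the closed form of $\mathcal{J}^{-1}_{\nu}$ in Lemma~\ref{Jacobian}(2), specialized to the transitional fields $(\rho_{\theta},U_{\theta},\mathcal{T}_{\nu\theta})$. The computation is the same in spirit as the one behind the preceding lemma for $\mathcal{M}_{\nu}$, but much lighter: no derivative of the Gaussian is involved, hence no surviving exponential factor, no $v$-dependence, and no $\det\mathcal{T}_{\nu\theta}$ in the denominators --- one simply has $\mathcal{J}^{-1}_{\theta}$ acting on the single vector $(T_{\theta},0^3,\tfrac{1}{3}\rho_{\theta}Id)=D_{(\rho_{\theta},U_{\theta},\mathcal{T}_{\nu\theta})}(\rho_{\theta}T_{\theta})$, followed by an inner product with $(\rho-1,\rho U,G)$.

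First I would record two elementary facts. Since $3T_{\theta}=\mathrm{tr}\,\mathcal{T}_{\nu\theta}=\mathcal{T}_{\theta 11}+\mathcal{T}_{\theta 22}+\mathcal{T}_{\theta 33}$, all entries of $(T_{\theta},0^3,\tfrac{1}{3}\rho_{\theta}Id)$ are polynomials in $(\rho_{\theta},\mathcal{T}_{\nu\theta})$ with vanishing constant term, $T_{\theta}$ being homogeneous linear in the entries of $\mathcal{T}_{\nu\theta}$. And by Lemma~\ref{Jacobian}(2) every entry of $\mathcal{J}^{-1}_{\theta}$ has the form $(\text{polynomial in }\rho_{\theta},U_{\theta},\mathcal{T}_{\nu\theta},\nu)/\rho_{\theta}^{\ell}$ with $\ell\in\{0,1\}$; concretely $\rho_{\theta}\widetilde{A}_{ii}=-\bigl(\mathcal{T}_{\theta ii}+\tfrac{1}{3}((1-\nu)|U_{\theta}|^{2}+3\nu U_{\theta i}^{2})-1\bigr)$ and $\rho_{\theta}\widetilde{A}_{ij}=\nu(-\mathcal{T}_{\theta ij}+U_{\theta i}U_{\theta j})$. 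Because only slots $1,5,6,7$ of $(T_{\theta},0^3,\tfrac{1}{3}\rho_{\theta}Id)$ are nonzero and columns $5,6,7$ of $\mathcal{J}^{-1}_{\theta}$ are supported in rows $5,6,7$, the $m$-th entry of the product collapses to $(\mathcal{J}^{-1}_{\theta})_{m,1}\,T_{\theta}$, plus an extra $\tfrac{2}{\rho_{\theta}}\cdot\tfrac{\rho_{\theta}}{3}=\tfrac{2}{3}$ when $m\in\{5,6,7\}$. Putting each such entry over the single monomial denominator $\rho_{\theta}^{\ell_{m}}$ gives $R^{A_{\nu}}_{m}=\rho_{\theta}^{\ell_{m}}$ --- this is $(\mathcal{H}_{A_{\nu}}2)$ --- and leaves a numerator $P^{A_{\nu}}_{m}$ that is a polynomial in $(\rho_{\theta},U_{\theta},\mathcal{T}_{\nu\theta},\nu)$ with vanishing constant term, since every monomial in it carries a visible factor of $\rho_{\theta}$, of some $U_{\theta k}$, of an off-diagonal $\mathcal{T}_{\theta ij}$, or of $T_{\theta}$, while the stray $-1$ inside $\rho_{\theta}\widetilde{A}_{ii}$ and the $\tfrac{2}{3}$ occur only multiplied by $T_{\theta}$ or, after clearing denominators, by $\rho_{\theta}$; that is $(\mathcal{H}_{A_{\nu}}1)$.

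Finally I would take the inner product with $(\rho-1,\rho U,G)$. As in the proof of Theorem~\ref{expansion}, each of its ten components is a linear functional of $f$ tested against a fixed polynomial times $\sqrt{\mu}$: one has $\rho-1=\langle f,\sqrt{\mu}\rangle$, $(\rho U)_{k}=\langle f,v_{k}\sqrt{\mu}\rangle$, and --- using that $G_{\nu}(\mu)=0$, so that $G_{\nu}$ is an exactly linear functional of $f$ --- $G_{ii}=\langle f,(\tfrac{1-\nu}{6}|v|^{2}+\tfrac{\nu}{2}v_{i}^{2}-\tfrac{1}{2})\sqrt{\mu}\rangle$ and $G_{ij}=\tfrac{\nu}{2}\langle f,v_{i}v_{j}\sqrt{\mu}\rangle$. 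Labelling these ten functions $e_{i}$ and matching them against the ten entries found above produces exactly the representation $\sum_{i}\frac{P^{A_{\nu}}_{i}}{R^{A_{\nu}}_{i}}\langle f,e_{i}\rangle$ with all the asserted properties. The step I expect to be the only delicate one --- and the single place the bookkeeping could go wrong --- is that the constants $\tfrac{2}{3}$ (and the $-1$'s) must be kept inside the fraction, for instance written as $\bigl(\rho_{\theta}\widetilde{A}_{ii}\,T_{\theta}+\tfrac{2}{3}\rho_{\theta}\bigr)/\rho_{\theta}$, rather than peeled off into separate terms with denominator $1$, since a peeled-off nonzero constant would violate $(\mathcal{H}_{A_{\nu}}1)$; apart from that, the argument is routine and follows the pattern of the preceding lemma.
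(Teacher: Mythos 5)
Your proof is correct and follows exactly the route the paper intends: the paper states this lemma without proof, treating it as an immediate consequence of the explicit form of $\mathcal{J}^{-1}_{\nu}$ in Lemma~\ref{Jacobian}, and your computation (the product collapsing to $(\mathcal{J}^{-1}_{\theta})_{m,1}T_{\theta}$ plus the $\tfrac{2}{3}$ in slots $5$--$7$, the identification of the components of $(\rho-1,\rho U,G)$ as linear functionals of $f$, and the check that the constants $-1$ and $\tfrac{2}{3}$ only ever appear multiplied by $T_{\theta}$ or $\rho_{\theta}$) is a correct and complete filling-in of that omitted verification.
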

Note that $P^{\mathcal{M}}_{ij}$, $R^{\mathcal{M}}_{ij}$, $P^{\mathcal{M}}_{i}$ and $R^{A_{\nu}}_{i}$ are defined generically.
They may change line after line during the argument. But explicit form is not important as long as we keep in mind the structural assumptions $\mathcal{H}_{\mathcal{M}}$
and $\mathcal{H}_{A_{\nu}}$.
To simplify the notation further, we define $\mathcal{Q}^{\mathcal{M}}_{ij}$ and $\mathcal{Q}^{A_{\nu}}_{i}$ as
\begin{equation*}
\mathcal{Q}^{\mathcal{M}_{\nu}}_{ij}(\theta)=\frac{1}{\sqrt{\mu}}\int^1_0\frac{P^{\mathcal{M}_{\nu}}_{i,j}(\rho_{\theta}, v-U_{\theta}, \mathcal{T}^{-1}_{\theta},\nu)}{R^{\mathcal{M}}_{ij}(\rho_{\theta},\det\mathcal{T}_{\nu\theta})}\exp\left(-\frac{1}{2}(v-U_{\theta})^{\top}\mathcal{T}^{-1}_{\nu\theta}(v-U_{\theta})\right)
(1-\theta)^2d\theta
\end{equation*}
and
\begin{equation*}
\mathcal{Q}^{A_{\nu}}_{i}(\theta)=\int^1_0\frac{P^{A_{\nu}}_{i}(\rho_{\theta}, U_{\theta}, \mathcal{T}_{\theta},\nu)}{R^{A_{\nu}}_{i}(\rho_{\theta})}
(1-\theta)d\theta.
\end{equation*}
Then the relaxation operator and the collision frequency can be expressed in a more succinct form:
\begin{equation*}
\mathcal{M}_{\nu}(F)-F=\big(P_{\nu}f-f\big)\sqrt{\mu}+\sum \mathcal{Q}^{\mathcal{M}_{\nu}}_{ij}\langle f,e_i\rangle_{L^2_v}\langle f,e_i\rangle_{L^2_v},
\end{equation*}
and
\begin{equation*}
A_{\nu}=\frac{1}{1-\nu}+\frac{1}{1-\nu}\sum \mathcal{Q}^{\nu}_{i}\langle f,e_i\rangle_{L^2_v}.
\end{equation*}

We summarize the result in the following proposition.
\begin{proposition}\label{Mf-f} The relaxation operator can be linearized around the normalized global Maxwellian $\mu$ as follows
\begin{equation*}
A_{\nu}\big(\mathcal{M}_{\nu}(F)-F\big)=\frac{1}{1-\nu}\Big(1+\sum_{i}\mathcal{Q}^{A_{\nu}}_{i}\langle f,e_i\rangle\Big)
\Big\{(P_{\nu}f-f)+\sum_{i,j}\mathcal{Q}^{\mathcal{M}_{\nu}}_{ij}\langle f,e_i\rangle\langle f,e_j\rangle\Big\}\sqrt{\mu}.
\end{equation*}
\end{proposition}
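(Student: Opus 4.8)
The plan is to assemble the pieces already established. I would begin from Theorem~\ref{expansion}, which gives the second-order Taylor expansion $\mathcal{M}_{\nu}(F)=\mu+(P_{\nu}f)\sqrt{\mu}+\mathcal{R}(f)$, where $\mathcal{R}(f)=\sum_{1\le i,j\le 3}\big(\int_0^1\{D^2_{(\rho_\theta,\rho_\theta U_\theta,G_\theta)}\mathcal{M}(\theta)\}_{ij}(1-\theta)^2\,d\theta\big)\langle f,e_i\rangle_{L^2_v}\langle f,e_j\rangle_{L^2_v}$ is the remainder. Subtracting $F=\mu+\sqrt{\mu}f$ cancels the zeroth-order terms, so
\[
\mathcal{M}_{\nu}(F)-F=(P_{\nu}f-f)\sqrt{\mu}+\mathcal{R}(f).
\]

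Next I would recast $\mathcal{R}(f)$ in the compact form. By the lemma that expresses the bilinear contraction $(\rho-1,\rho U,G)^{\top}\{D^2_{(\rho_\theta,\rho_\theta U_\theta,G_\theta)}\mathcal{M}(\theta)\}(\rho-1,\rho U,G)$ as $\sum_{i,j}\frac{P^{\mathcal{M}}_{i,j}}{R^{\mathcal{M}}_{i,j}}\exp(-\tfrac12(v-U_\theta)^{\top}\mathcal{T}^{-1}_\theta(v-U_\theta))\langle f,e_i\rangle\langle f,e_j\rangle$ subject to the structural hypotheses $(\mathcal{H}_{\mathcal{M}})$, and recalling that $\mathcal{Q}^{\mathcal{M}_\nu}_{ij}(\theta)$ was defined precisely to absorb the $\theta$-integral together with the normalizing factor $1/\sqrt{\mu}$, one obtains $\mathcal{R}(f)=\sqrt{\mu}\sum_{i,j}\mathcal{Q}^{\mathcal{M}_\nu}_{ij}\langle f,e_i\rangle\langle f,e_j\rangle$. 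Hence
\[
\mathcal{M}_{\nu}(F)-F=\Big\{(P_{\nu}f-f)+\sum_{i,j}\mathcal{Q}^{\mathcal{M}_\nu}_{ij}\langle f,e_i\rangle_{L^2_v}\langle f,e_j\rangle_{L^2_v}\Big\}\sqrt{\mu}.
\]

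For the collision frequency I would invoke the lemma $A_{\nu}=\frac{1}{1-\nu}+\frac{1}{1-\nu}A_p$, then the lemma writing $A_p=\big\{\int_0^1\mathcal{J}^{-1}_\theta(T_\theta,0^3,\tfrac13\rho_\theta Id)(1-\theta)\,d\theta\big\}\cdot(\rho-1,\rho U,G)$ as $\sum_i\frac{P^{A_\nu}_i}{R^{A_\nu}_i}\langle f,e_i\rangle$ under $(\mathcal{H}_{A_\nu})$; together with the definition of $\mathcal{Q}^{A_\nu}_i(\theta)$ this yields $A_{\nu}=\frac{1}{1-\nu}\big(1+\sum_i\mathcal{Q}^{A_\nu}_i\langle f,e_i\rangle_{L^2_v}\big)$. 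Multiplying the two displays and collecting the common factor $\sqrt{\mu}$ produces exactly the asserted identity for $A_{\nu}(\mathcal{M}_{\nu}(F)-F)$.

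The content here is essentially bookkeeping, so the one point that genuinely requires care --- and which is what the ``generic polynomial'' formalism is designed to hide --- is that the coefficients $\mathcal{Q}^{\mathcal{M}_\nu}_{ij}$ and $\mathcal{Q}^{A_\nu}_i$ are well-defined finite quantities. The factor $\frac{1}{\sqrt{\mu}}\exp(-\tfrac12(v-U_\theta)^{\top}\mathcal{T}^{-1}_\theta(v-U_\theta))$ behaves like $e^{|v|^2/4}$ times a Gaussian in $v-U_\theta$, so along the whole interpolation segment $\theta\in[0,1]$ one needs $\mathcal{T}_\theta$ uniformly positive definite and $\det\mathcal{T}_\theta$ bounded below; for $-1/2<\nu<1$ and $\mathcal{E}(f)$ small this follows by applying the equivalence estimate of Proposition~\ref{Equivalence T} to the transitional fields $(\rho_\theta,U_\theta,\mathcal{T}_\theta)$. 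The same fact is what makes $(\mathcal{H}_{\mathcal{M}})$ and $(\mathcal{H}_{A_\nu})$ usable, since $R^{\mathcal{M}}_{ij}$ and $R^{A_\nu}_i$ are monomials in $\rho_\theta$ and $\det\mathcal{T}_\theta$ that stay away from zero, while $P^{\mathcal{M}}_{ij}$ and $P^{A_\nu}_i$ vanish at the origin and thus carry the smallness. With this in place the proof is a direct substitution.
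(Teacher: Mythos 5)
Your proposal is correct and follows essentially the same route as the paper, which offers no separate proof: the proposition is obtained exactly as you describe, by multiplying the two displayed identities for $\mathcal{M}_{\nu}(F)-F$ and $A_{\nu}$ that immediately precede it, with the $1/\sqrt{\mu}$ built into the definition of $\mathcal{Q}^{\mathcal{M}_{\nu}}_{ij}$ accounting for the common factor $\sqrt{\mu}$. Your closing remark on the well-definedness of the coefficients (uniform positive definiteness of $\mathcal{T}_{\theta}$ and lower bounds on $\det\mathcal{T}_{\theta}$, $\rho_{\theta}$) is a point the paper defers to Section~3 rather than addressing here, but it does not change the argument.
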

We now substitute the standard perturbation $F=\mu+\sqrt{\mu}f$ into (1.1) and apply proposition \ref{Mf-f} to obtain the perturbed ES-BGK model:
\begin{eqnarray}\label{LBGK}
\begin{split}
\partial_t f+v\cdot\nabla_x f&=L_{\nu}f+\Gamma(f),\cr
f(x,v,0)&=f_0(x,v),
\end{split}
\end{eqnarray}
where $f_0(x,v)=\frac{F_0-\mu}{\sqrt{\mu}}$. The linearized relaxation operator $L_{\nu}$ and the nonlinear perturbation $\Gamma(f)$ are defined
as follows:
\begin{eqnarray}
L_{\nu}f=\frac{1}{1-\nu}\big(P_{\nu}f-f\big),
\end{eqnarray}
and
\begin{eqnarray*}
\Gamma(f)&=&\frac{1}{1-\nu}\Big\{\sum_{i}\mathcal{Q}^{A}_{i}\langle f,e_i\rangle\Big\}\big(P_{\nu}f-f\big)\cr
&+&\frac{1}{1-\nu}\sum_{1\leq i,j \leq 3}\mathcal{Q}^{\mathcal{M}}_{i,j}\langle f,e_i\rangle_{L^2_v}\langle f,e_j\rangle_{L^2_v}\cr
&+&\frac{1}{1-\nu}\sum_{1\leq i,j \leq 3}\mathcal{Q}^{A^{\nu}}_i\mathcal{Q}^{\mathcal{M}}_{j,k}\langle f,e_i\rangle_{L^2_v}
\langle f,e_j\rangle_{L^2_v}
\langle f,e_k\rangle_{L^2_v}\cr
&\equiv&\Gamma_1(f,f)+\Gamma_2(f,f)+\Gamma_3(f,f,f).
\end{eqnarray*}
The conservation laws in (\ref{ConservationLawsF}) now take the following form:
\begin{eqnarray*}\label{ConservationLawsf0}
\begin{split}
\int_{\mathbb{T}^3_x\times\mathbb{R}^3_v}f(x,v,t)\sqrt{\mu} ~dxdv&=\int_{\mathbb{T}^3_x\times\mathbb{R}^3_v}f_0(x,v)\sqrt{\mu} ~dxdv,\cr
\int_{\mathbb{T}^3_x\times\mathbb{R}^3_v}f(x,v,t)v\sqrt{\mu} ~dxdv&=\int_{\mathbb{T}^3_x\times\mathbb{R}^3_v}f_0(x,v)v\sqrt{\mu} ~dxdv,\cr
\int_{\mathbb{T}^3_x\times\mathbb{R}^3_v}f(x,v,t)|v|^2\sqrt{\mu} ~dxdv&=\int_{\mathbb{T}^3_x\times\mathbb{R}^3_v}f_0(x,v)|v|^2\sqrt{\mu} ~dxdv.
\end{split}
\end{eqnarray*}
Therefore, if initial data shares the same mass, momentum and energy with $\mu$, the conservation laws read
\begin{eqnarray}\label{ConservationLawsf}
\begin{split}
\int_{\mathbb{T}^3_x\times\mathbb{R}^3_v}f(x,v,t)\sqrt{\mu} ~dxdv&=0,\cr
\int_{\mathbb{T}^3_x\times\mathbb{R}^3_v}f(x,v,t)v\sqrt{\mu} ~dxdv&=0,\cr
\int_{\mathbb{T}^3_x\times\mathbb{R}^3_v}f(x,v,t)|v|^2\sqrt{\mu} ~dxdv&=0.
\end{split}
\end{eqnarray}

\subsection{Analysis of the linearized relaxation operator}
We now study the dissipative mechanism of the linearized operator. We start with the following technical lemma.
%
%
%
%
\begin{lemma}\label{Nilpotent} $P_0$, $P_1$ and $P_2$ satisfies the following properties:
\begin{enumerate}
\item $P_0$, $P_2$ and $P_2$ are orthonormal projections:
\begin{eqnarray*}
~P^2_0=P_0,~P^2_1=P_1,~P^2_2=P_2.
\end{eqnarray*}
\item $P_0$, $P_1$ and $P_3$ are mutually orthogonal in the following sense:
\begin{eqnarray*}
P_0P_1=P_1P_0=P_0P_2=P_2P_0=P_1P_2=P_2P_1=0.
\end{eqnarray*}
\end{enumerate}
\end{lemma}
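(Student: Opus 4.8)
\textbf{Proof proposal for Lemma~\ref{Nilpotent}.}

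The plan is to verify both claims directly from the explicit formulas for $P_0$, $P_1$, $P_2$ given in Theorem~\ref{expansion}, treating each as an $L^2_v$-projection onto the span of an explicit finite collection of functions weighted by $\sqrt{\mu}$. The key observation is that all three operators are of the form $Pf = \sum_k \langle f, \phi_k\rangle_{L^2_v}\,\phi_k$ for suitable families $\{\phi_k\}$; namely $P_0$ uses $\phi_k \in \{\sqrt{\mu},\, v_i\sqrt{\mu}\,(i=1,2,3),\, \tfrac{|v|^2-3}{\sqrt 6}\sqrt{\mu}\}$, $P_1$ uses $\phi_k = \tfrac{3v_k^2-|v|^2}{3\sqrt 2}\sqrt{\mu}$ for $k=1,2,3$, and $P_2$ uses $\phi_k = v_iv_j\sqrt{\mu}$ for $i<j$. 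For such an operator, one has $P^2 = P$ precisely when the family $\{\phi_k\}$ is orthonormal, and $P Q = 0$ for two such operators $P = \sum_k \langle\cdot,\phi_k\rangle\phi_k$, $Q = \sum_\ell \langle\cdot,\psi_\ell\rangle\psi_\ell$ precisely when $\langle\phi_k,\psi_\ell\rangle_{L^2_v} = 0$ for every $k,\ell$ (since $PQf = \sum_{k,\ell}\langle f,\psi_\ell\rangle\langle\psi_\ell,\phi_k\rangle\phi_k$). So everything reduces to computing Gaussian moments.

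First I would record the normalizations $\int_{\mathbb{R}^3}\mu\,dv = 1$, $\int v_i^2\,\mu\,dv = 1$, $\int v_i^4\,\mu\,dv = 3$, $\int v_i^2 v_j^2\,\mu\,dv = 1$ for $i\neq j$, and that all odd moments vanish. From these: $\|\sqrt\mu\|^2 = 1$; $\|v_i\sqrt\mu\|^2 = 1$; $\|\tfrac{|v|^2-3}{\sqrt6}\sqrt\mu\|^2 = \tfrac16\big(\int|v|^4\mu - 6\int|v|^2\mu + 9\big) = \tfrac16(15 - 18 + 9) = 1$, using $\int|v|^4\mu\,dv = \sum_i\int v_i^4\mu + \sum_{i\neq j}\int v_i^2v_j^2\mu = 9 + 6 = 15$; $\|v_iv_j\sqrt\mu\|^2 = 1$ for $i<j$; and $\|\tfrac{3v_k^2-|v|^2}{3\sqrt2}\sqrt\mu\|^2 = \tfrac1{18}\int(2v_k^2 - v_m^2 - v_n^2)^2\mu$ (with $\{m,n\}$ the complementary indices) $= \tfrac1{18}(4\cdot3 + 3 + 3 - 2\cdot2 - 2\cdot2 + 2) = \tfrac1{18}(12 + 6 - 8 + 2) = 1$. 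For the cross terms within a single operator, odd-moment vanishing kills $\langle\sqrt\mu, v_i\sqrt\mu\rangle$ and $\langle v_i\sqrt\mu, \tfrac{|v|^2-3}{\sqrt6}\sqrt\mu\rangle$; $\langle\sqrt\mu,\tfrac{|v|^2-3}{\sqrt6}\sqrt\mu\rangle = \tfrac1{\sqrt6}(3 - 3) = 0$; within $P_1$, $\langle\tfrac{3v_k^2-|v|^2}{3\sqrt2}\sqrt\mu, \tfrac{3v_\ell^2-|v|^2}{3\sqrt2}\sqrt\mu\rangle$ for $k\neq\ell$ expands to a combination of the moments above that cancels to $0$; within $P_2$ the functions $v_iv_j\sqrt\mu$ for distinct index pairs are orthogonal since an odd moment appears. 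This gives claim~(1).

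For claim~(2) I would check orthogonality across the three families. Every generator of $P_0$ is either even of degree $0$ or $2$ ($\sqrt\mu$, $\tfrac{|v|^2-3}{\sqrt6}\sqrt\mu$) or odd of degree $1$ ($v_i\sqrt\mu$); every generator of $P_1$ is even of degree $2$; every generator of $P_2$ is even of degree $2$ with a genuine mixed monomial $v_iv_j$. Pairings of an odd $P_0$-generator with anything from $P_1$ or $P_2$ vanish by parity. For $\langle\sqrt\mu, \tfrac{3v_k^2-|v|^2}{3\sqrt2}\sqrt\mu\rangle$ we get $\tfrac1{3\sqrt2}\int(3v_k^2-|v|^2)\mu = \tfrac1{3\sqrt2}(3 - 3) = 0$, and $\langle\sqrt\mu, v_iv_j\sqrt\mu\rangle = 0$ for $i\neq j$ by the odd moment; $\langle\tfrac{|v|^2-3}{\sqrt6}\sqrt\mu, v_iv_j\sqrt\mu\rangle = 0$ likewise; $\langle\tfrac{|v|^2-3}{\sqrt6}\sqrt\mu, \tfrac{3v_k^2-|v|^2}{3\sqrt2}\sqrt\mu\rangle$ reduces to $\tfrac1{3\sqrt{12}}\int(|v|^2-3)(3v_k^2-|v|^2)\mu$, and expanding gives $\int(3v_k^2|v|^2 - |v|^4 - 9v_k^2 + 3|v|^2)\mu = (3\cdot5 - 15 - 9 + 9) = 0$ using $\int v_k^2|v|^2\mu = \int v_k^4\mu + \sum_{m\neq k}\int v_k^2 v_m^2\mu = 3 + 2 = 5$; finally $\langle\tfrac{3v_k^2-|v|^2}{3\sqrt2}\sqrt\mu, v_iv_j\sqrt\mu\rangle = 0$ for $i\neq j$ since the integrand $(3v_k^2-|v|^2)v_iv_j$ is odd in at least one variable. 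Since $P_0, P_1, P_2$ are each self-adjoint on $L^2_v$, $PQ = 0$ immediately gives $QP = (PQ)^* = 0$ as well, so all six products in the statement vanish.

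The calculations are entirely routine Gaussian integrals; there is no serious obstacle. The one place to be careful is bookkeeping in the degree-$2$ even sector, where $P_0$ (via $|v|^2-3$), $P_1$ (via $3v_k^2-|v|^2$), and $P_2$ (via $v_iv_j$) all live: one must confirm that these really do span mutually orthogonal subspaces of the $6$-dimensional space of symmetric quadratics times $\sqrt\mu$, which amounts to the decomposition of a symmetric matrix into its trace part, its traceless-diagonal part, and its off-diagonal part — exactly the decomposition that underlies the appearance of $P_1$ and $P_2$ in Theorem~\ref{expansion}. I would phrase the final write-up around that structural remark rather than listing all the individual inner products.
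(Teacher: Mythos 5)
There is a genuine gap in your treatment of $P_1^2=P_1$, and it is exactly the point where this lemma is not routine. Your reduction rests on the claim that $P^2=P$ ``precisely when the family $\{\phi_k\}$ is orthonormal,'' and you then assert that the generators $c_k=\tfrac{3v_k^2-|v|^2}{3\sqrt2}\sqrt\mu$ are orthonormal. They are not: your own intermediate numbers give $\int(2v_k^2-v_m^2-v_n^2)^2\mu\,dv=12+6-8+2=12$, and $12/18=\tfrac23$, not $1$; likewise the cross term does not ``cancel to $0$'' but equals $\langle c_k,c_\ell\rangle=-\tfrac{6}{18}=-\tfrac13$ for $k\neq\ell$ (expand $(2v_1^2-v_2^2-v_3^2)(2v_2^2-v_1^2-v_3^2)$ and integrate to get $-6$). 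In fact the three generators are linearly dependent, since $c_1+c_2+c_3=0$, so they cannot possibly be an orthonormal family, and your sufficient criterion simply does not apply to $P_1$. The conclusion $P_1^2=P_1$ is still true, but it needs an extra idea: the Gram matrix $G=\big(\langle c_i,c_j\rangle\big)=I-\tfrac13 J$ (with $J$ the all-ones matrix) is itself an idempotent, namely the orthogonal projection of $\mathbb{R}^3$ onto $(1,1,1)^\perp$, and the coefficient vector $\big(\langle f,c_i\rangle\big)_i$ automatically lies in $(1,1,1)^\perp$ because $\sum_i c_i=0$; hence $G$ acts as the identity on it and $P_1^2f=P_1f$. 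This is precisely the computation the paper carries out, invoking $c_1+c_2+c_3=0$ in the last step.

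The rest of your proposal is sound and matches the paper's route: the idempotency of $P_0$ and $P_2$ does follow from genuine orthonormality of their generators, and your verification of part (2) --- parity arguments for the odd generators, the vanishing of $\int(|v|^2-3)(3v_k^2-|v|^2)\mu\,dv$ and of $\int(3v_k^2-|v|^2)v_iv_j\mu\,dv$, plus self-adjointness to get $QP=(PQ)^{*}=0$ --- is correct. Your closing structural remark (trace part, traceless diagonal part, off-diagonal part of a symmetric quadratic) is the right picture, but note that the traceless diagonal sector is only two-dimensional while $P_1$ is written with three generators; that redundancy is exactly why the orthonormal-family shortcut fails for $P_1$ and why the relation $c_1+c_2+c_3=0$ must enter the proof.
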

\begin{proof}
(1) The first and third identities $P^2_0=P_0$ and $P^2_2=P_2$ follow from the fact that $\{\sqrt{\mu},v\sqrt{\mu},|v|^2\sqrt{\mu}\}$ and $\{v_1v_2\sqrt{\mu}, v_2v_3\sqrt{\mu}, v_3v_1\sqrt{\mu}\}$ form orthonormal bases respectively. To show $P^2_1=P_1$, we first observe that
\begin{eqnarray*}
&&\left\langle (3v^2_i-|v|^2)\sqrt{\mu}, (3v^2_i-|v|^2)\sqrt{\mu}\right\rangle_{L^2_v}=12,\hspace{0.6cm}(1=1,2,3)\cr
&&\left\langle (3v^2_i-|v|^2)\sqrt{\mu}, (3v^2_j-|v|^2)\sqrt{\mu}\right\rangle_{L^2_v}=-6 \quad\quad(i\neq j).
\end{eqnarray*}
Using this, we have for $c_i(v)=(3v^2_i-|v|^2)/3\sqrt{2}$
\begin{eqnarray*}
P_1^2f&=&P_1\big\{\langle f,c_1\rangle_{L^2_v} c_1+\langle f,c_2\rangle_{L^2_v} c_2+\langle f,c_3\rangle_{L^2_v} c_3\big\}\cr
&=&\frac{1}{3}\{2\langle f,c_1\rangle_{L^2_v}-\langle f,c_2\rangle_{L^2_v}-\langle f,c_3\rangle_{L^2_v}\}c_1\cr
&+&\frac{1}{3}\{-\langle f,c_1\rangle_{L^2_v}+2\langle f,c_2\rangle_{L^2_v}-\langle f,c_3\rangle_{L^2_v}\}c_1\cr
&+&\frac{1}{3}\{-\langle f,c_1\rangle_{L^2_v}-\langle f,c_2\rangle_{L^2_v}+2\langle f,c_3\rangle_{L^2_v}\}c_1\cr
&=&\Big\langle f,\frac{2c_1-c_2-c_3}{3}\Big\rangle_{L^2_v} c_1+\Big\langle f,\frac{-c_1+2c_2-c_3}{3}\Big\rangle_{L^2_v} c_2+\Big\langle f,\frac{-c_1-c_2+2c_3}{3}
\Big\rangle_{L^2_v} c_3\cr
&=&\langle f,c_1\rangle_{L^2_v} c_1+\langle f,c_2\rangle_{L^2_v} c_2+\langle f,c_3\rangle_{L^2_v} c_3\cr
&=&P_1f.
\end{eqnarray*}
In the last line, we used $c_1+c_2+c_3=0$.\newline
(2) Straightforward calculations gives
\begin{eqnarray*}
\langle \sqrt{\mu},(3v^2_i-|v|^2)\sqrt{\mu}\rangle_{L^2_v}=\langle v_{\ell}\sqrt{\mu},(3v^2_i-|v|^2)\sqrt{\mu}\rangle_{L^2_v}
=\langle (|v|^2-3)\sqrt{\mu}, (3v^2_i-|v|^2)\sqrt{\mu}\rangle_{L^2_v}=0,
\end{eqnarray*}
and
\begin{eqnarray*}
\langle v_iv_j\sqrt{\mu}, (3v^2_k-|v|^2)\sqrt{\mu}\rangle_{L^2_v}=0.
\end{eqnarray*}
This implies (2).
\end{proof}
We now prove the main theorem of this section. Note that that the estimate is uniform with respect to $\nu$.
%
%
%
%
\begin{theorem}\label{degenerate coercivity} For $-\frac{1}{2}<\nu<1$, we have
\begin{eqnarray*}
\langle L_{\nu}f,f\rangle_{L^2_v}\leq -\min\Big\{1,\frac{1-|\nu|}{1-\nu}\Big\}\|(I-P_0)f\|^2_{L^2_{v}}.
\end{eqnarray*}
\end{theorem}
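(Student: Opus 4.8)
The plan is to use the explicit decomposition $L_{\nu}f = \frac{1}{1-\nu}(P_{\nu}f - f)$ with $P_{\nu} = P_0 + \nu(P_1 + P_2)$ from Theorem \ref{expansion}, together with the orthogonality relations in Lemma \ref{Nilpotent}. First I would compute $\langle L_{\nu}f,f\rangle_{L^2_v}$ directly. Writing $f = P_0 f + (I-P_0)f$ and using $P_1 P_0 = P_2 P_0 = 0$ from Lemma \ref{Nilpotent}, one sees $P_1 f = P_1(I-P_0)f$ and $P_2 f = P_2(I-P_0)f$, so that
\begin{equation*}
(1-\nu)\langle L_{\nu}f,f\rangle_{L^2_v} = \langle P_0 f, f\rangle_{L^2_v} - \|f\|^2_{L^2_v} + \nu\langle P_1 f, f\rangle_{L^2_v} + \nu\langle P_2 f, f\rangle_{L^2_v}.
\end{equation*}
Since $P_0$, $P_1$, $P_2$ are orthogonal projections (Lemma \ref{Nilpotent}(1)), $\langle P_i f, f\rangle_{L^2_v} = \|P_i f\|^2_{L^2_v} \geq 0$. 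Moreover $\langle P_0 f, f\rangle_{L^2_v} - \|f\|^2_{L^2_v} = -\|(I-P_0)f\|^2_{L^2_v}$. Hence
\begin{equation*}
(1-\nu)\langle L_{\nu}f,f\rangle_{L^2_v} = -\|(I-P_0)f\|^2_{L^2_v} + \nu\|P_1 f\|^2_{L^2_v} + \nu\|P_2 f\|^2_{L^2_v}.
\end{equation*}

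Next I would bound the $\nu$-dependent terms. Because $P_0$, $P_1$, $P_2$ are mutually orthogonal projections, $\|P_1 f\|^2_{L^2_v} + \|P_2 f\|^2_{L^2_v} = \|(P_1+P_2)f\|^2_{L^2_v} = \|(P_1+P_2)(I-P_0)f\|^2_{L^2_v} \leq \|(I-P_0)f\|^2_{L^2_v}$, the last step because $P_1 + P_2$ is itself an orthogonal projection (the ranges of $P_1$ and $P_2$ are orthogonal and each is orthogonal to the range of $P_0$). Therefore, setting $\epsilon := \|P_1 f\|^2 + \|P_2 f\|^2 \in [0, \|(I-P_0)f\|^2]$, we get $(1-\nu)\langle L_{\nu}f,f\rangle_{L^2_v} = -\|(I-P_0)f\|^2_{L^2_v} + \nu\,\epsilon$.

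The main (and only real) obstacle is handling the sign of $\nu$. If $0 \leq \nu < 1$, then $\nu\epsilon \leq \nu\|(I-P_0)f\|^2$, giving $(1-\nu)\langle L_{\nu}f,f\rangle_{L^2_v} \leq -(1-\nu)\|(I-P_0)f\|^2$, i.e. $\langle L_{\nu}f,f\rangle_{L^2_v} \leq -\|(I-P_0)f\|^2_{L^2_v}$, which is the claimed bound with constant $1$. If $-1/2 < \nu < 0$, then $\nu\epsilon \leq 0$ is the wrong direction to discard, so instead I bound $\nu\epsilon \geq \nu\|(I-P_0)f\|^2$ (since $\nu < 0$ and $\epsilon \leq \|(I-P_0)f\|^2$), yielding $(1-\nu)\langle L_{\nu}f,f\rangle_{L^2_v} \leq -(1-|\nu|)\|(I-P_0)f\|^2$ — here using $1+\nu = 1 - |\nu|$ — so that $\langle L_{\nu}f,f\rangle_{L^2_v} \leq -\frac{1-|\nu|}{1-\nu}\|(I-P_0)f\|^2_{L^2_v}$. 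Combining the two cases gives the stated constant $\min\{1, \frac{1-|\nu|}{1-\nu}\}$, which is strictly positive precisely on $-1/2 < \nu < 1$ (one checks $\frac{1-|\nu|}{1-\nu} > 0 \iff \nu > -1$... more carefully, for $\nu<0$ it equals $\frac{1+\nu}{1-\nu}$, positive iff $\nu > -1$; the sharper restriction $\nu > -1/2$ comes from positive-definiteness of $\mathcal{T}_\nu$ elsewhere, not from this estimate). I would double-check that $P_1 + P_2$ is genuinely an orthogonal projection by verifying $\langle c_i \sqrt\mu, v_j v_k \sqrt\mu\rangle_{L^2_v} = 0$, which follows from parity since $c_i$ is even in each variable while $v_j v_k$ with $j \neq k$ is odd in two variables; this is the one small computation the argument rests on.
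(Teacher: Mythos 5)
Your proposal is essentially the paper's own proof: the same reduction $(1-\nu)\langle L_{\nu}f,f\rangle_{L^2_v}=-\|(I-P_0)f\|^2_{L^2_v}+\nu\langle(P_1+P_2)f,f\rangle_{L^2_v}$, the same appeal to Lemma \ref{Nilpotent} to see that $P_1+P_2$ is an orthogonal projection annihilating the range of $P_0$, and the same bound $0\le\langle(P_1+P_2)f,f\rangle_{L^2_v}\le\|(I-P_0)f\|^2_{L^2_v}$, so the argument and conclusion are correct. One step in your case analysis is stated backwards, though: for $-1/2<\nu<0$, the inequality $\nu\epsilon\ge\nu\|(I-P_0)f\|^2_{L^2_v}$ is a \emph{lower} bound on the summand $\nu\epsilon$ and cannot by itself produce an upper bound on $-\|(I-P_0)f\|^2_{L^2_v}+\nu\epsilon$; what you actually need is an upper bound such as $\nu\epsilon\le|\nu|\,\|(I-P_0)f\|^2_{L^2_v}$, or more simply the estimate $\nu\epsilon\le 0$ that you dismissed as ``the wrong direction'' --- discarding $\nu\epsilon$ there is perfectly legitimate and in fact yields the stronger constant $\tfrac{1}{1-\nu}\ge\tfrac{1-|\nu|}{1-\nu}$. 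Since either correct route immediately gives the stated bound, this is a local slip in the write-up rather than a gap in the method.
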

\begin{proof}
From the definition of $L_{\nu}$, we have
\begin{eqnarray}\label{setting}
\begin{split}
(1-\nu)\langle L_{\nu}f,f\rangle_{L^2_{v}}&=\langle P_{\nu}f-f,f\rangle_{L^2_{v}}\cr
&=\langle P_0f-f+\nu(P_1+P_2)f,f\rangle_{L^2_{v}}\cr
&=-\|(I-P_0)f\|^2_{L^2_v}+\nu\langle(P_1+P_2)f,f\rangle_{L^2_{v}}.
\end{split}
\end{eqnarray}
We recall from Lemma \ref{Nilpotent} that $(P_1+ P_2)\perp P_0$, which gives
\begin{eqnarray}\label{setting2}
\begin{split}
\langle(P_1+P_2)f,f\rangle_{L^2_{v}}&=\langle (P_1+P_2)(I-P_0)f,(I-P_0)f\rangle_{L^2_{v}}\cr
&=\|(P_1+P_2)(I-P_0)f\|^2_{L^2_v}.
\end{split}
\end{eqnarray}
We then observe from Lemma \ref{Nilpotent} that $P_1+P_2$ is a projection operator:
\begin{eqnarray*}
(P_1+P_2)^2=P^2_1+P_1P_2+P_2P_1+P^2_2=P_1+P_2,
\end{eqnarray*}
which leads to
\begin{eqnarray}\label{setting3}
\|(P_1+P_2)(I-P_0)f\|_{L^2_v}\leq\|(I-P_0)f\|^2_{L^2_v}.
\end{eqnarray}
Therefore, we have from (\ref{setting}) - (\ref{setting2})
\begin{eqnarray*}
(1-\nu)\langle L_{\nu}f,f\rangle_{L^2_{v}}\leq -\min\{(1-\nu),(1-|\nu|)\}\|(I-P_0)f\|^2_{L^2_{v}}.
\end{eqnarray*}
Since $(1-\nu)>0$, this completes the proof.
\end{proof}
%
%
%
%
\begin{corollary}\label{Kernel L} For $-1/2<\nu<1$, the kernel of the linearized relaxation operator is given by
\begin{eqnarray*}
Ker\{L_{\nu}\}=Ker\{L_{0}\}=span\{\sqrt{\mu},v\sqrt{\mu},|v|^2\sqrt{\mu}\}.
\end{eqnarray*}
\end{corollary}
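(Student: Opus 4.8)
\textit{Proof proposal.} The plan is to prove the two inclusions $\mathrm{Ker}\{L_{\nu}\}\subseteq\mathrm{span}\{\sqrt{\mu},v\sqrt{\mu},|v|^2\sqrt{\mu}\}$ and $\mathrm{span}\{\sqrt{\mu},v\sqrt{\mu},|v|^2\sqrt{\mu}\}\subseteq\mathrm{Ker}\{L_{\nu}\}$ separately, using the coercivity estimate of Theorem \ref{degenerate coercivity} for the first and the orthogonality relations of Lemma \ref{Nilpotent} for the second, and then observe that the resulting kernel is manifestly independent of $\nu$.

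For the forward inclusion, I would first note that for $-1/2<\nu<1$ the constant $C_{\nu}:=\min\{1,\tfrac{1-|\nu|}{1-\nu}\}$ appearing in Theorem \ref{degenerate coercivity} is strictly positive: $1-\nu>0$ since $\nu<1$, and $1-|\nu|>0$ since $-1/2<\nu<1$ forces $|\nu|<1$. Now if $f\in\mathrm{Ker}\{L_{\nu}\}$, then $L_{\nu}f=0$, so $0=\langle L_{\nu}f,f\rangle_{L^2_v}\leq -C_{\nu}\|(I-P_0)f\|^2_{L^2_v}\leq 0$, which forces $\|(I-P_0)f\|_{L^2_v}=0$, i.e.\ $f=P_0f$. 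By the definition of $P_0$ in Theorem \ref{expansion}, the range of $P_0$ is exactly $\mathrm{span}\{\sqrt{\mu},v_1\sqrt{\mu},v_2\sqrt{\mu},v_3\sqrt{\mu},(|v|^2-3)\sqrt{\mu}\}=\mathrm{span}\{\sqrt{\mu},v\sqrt{\mu},|v|^2\sqrt{\mu}\}$, so $f$ lies in this span.

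For the reverse inclusion, suppose $f\in\mathrm{span}\{\sqrt{\mu},v\sqrt{\mu},|v|^2\sqrt{\mu}\}$, so $P_0f=f$. From the representation $L_{\nu}f=\tfrac{1}{1-\nu}\bigl((P_0f-f)+\nu(P_1+P_2)f\bigr)$, the term $P_0f-f$ vanishes by assumption, while Lemma \ref{Nilpotent}(2) gives $P_1P_0=P_2P_0=0$, so $P_1f=P_1(P_0f)=0$ and $P_2f=P_2(P_0f)=0$; hence $L_{\nu}f=0$. Combining the two inclusions gives $\mathrm{Ker}\{L_{\nu}\}=\mathrm{span}\{\sqrt{\mu},v\sqrt{\mu},|v|^2\sqrt{\mu}\}$, and since the right-hand side does not involve $\nu$, it coincides with $\mathrm{Ker}\{L_0\}$ (the case $\nu=0$). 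I do not expect any genuine obstacle: the only point needing care is the strict positivity of $C_{\nu}$ on the whole open interval $(-1/2,1)$ — in particular the exclusion of the endpoint $\nu=1$ — and the identification of $\mathrm{Range}(P_0)$ with the stated span, which is immediate from the definition of $P_0$.
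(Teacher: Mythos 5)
Your proof is correct and follows exactly the route the paper intends: the corollary is stated as an immediate consequence of Theorem \ref{degenerate coercivity} (whose constant $\min\{1,\tfrac{1-|\nu|}{1-\nu}\}$ is indeed strictly positive on the whole interval $-1/2<\nu<1$), combined with the orthogonality relations of Lemma \ref{Nilpotent} for the reverse inclusion. The only detail worth keeping explicit is the identification $\mathrm{Range}(P_0)=\mathrm{span}\{\sqrt{\mu},v\sqrt{\mu},|v|^2\sqrt{\mu}\}$, which you handle correctly via $(|v|^2-3)\sqrt{\mu}=|v|^2\sqrt{\mu}-3\sqrt{\mu}$.
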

\section{Estimates on the macroscopic field}
\subsection{Estimates on the macroscopic field}
To control the nonlinear perturbation $\Gamma(f)$ in the energy norm, we first need to establish
various estimates for macroscopic quantities. Throughout this section, $C_{\nu}>0$ means that $C_{\nu}$ is strictly
positive for all $-1/2<\nu<1$.
\begin{lemma}\label{ULestimatesofRhoUT}
Let $\mathcal{E}(t)$ be sufficiently small, then there exists a positive constant $C>0$ and $C_{\nu}>0$ such that
\begin{eqnarray*}
&&(1)~|\rho(x,t)-1|\leq C\sqrt{\mathcal{E}(t)},\cr
&&(2)~|U_i(x,t)|\leq C\sqrt{\mathcal{E}(t)}, \hspace{1.7cm}(1\leq i\leq 3)\cr
&&(3)~|\mathcal{T}_{\nu ii}(x,t)-1|\leq C_{\nu}\sqrt{\mathcal{E}(t)},\quad \quad(1\leq i\leq 3)\cr
&&(4)~|\mathcal{T}_{ij}(x,t)|\leq \nu C\sqrt{\mathcal{E}(t)},\hspace{1.4cm}(1\leq i<j\leq 3)
\end{eqnarray*}
\end{lemma}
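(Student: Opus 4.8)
The plan is to recover each macroscopic quantity as a velocity-average of $f$ against a fixed Gaussian weight, and then bound that average pointwise in $x$ by the energy $\mathcal{E}(t)$ via Sobolev embedding on $\mathbb{T}^3_x$. Concretely, write $F=\mu+\sqrt{\mu}f$ and expand each moment. For the density, $\rho(x,t)-1=\int_{\mathbb{R}^3}f(x,v,t)\sqrt{\mu}\,dv=\langle f(x,\cdot,t),\sqrt{\mu}\rangle_{L^2_v}$, so by Cauchy--Schwarz in $v$ we get $|\rho(x,t)-1|\le\|f(x,\cdot,t)\|_{L^2_v}$ (using $\|\sqrt{\mu}\|_{L^2_v}=1$). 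Then the Sobolev embedding $H^2(\mathbb{T}^3_x)\hookrightarrow L^\infty(\mathbb{T}^3_x)$ (valid since $N\ge 4\ge 2$) gives $\sup_x\|f(x,\cdot,t)\|_{L^2_v}\le C\sum_{|\alpha|\le 2}\|\partial^\alpha_x f(t)\|_{L^2_{x,v}}\le C\sqrt{\mathcal{E}(t)}$, which is (1). For the bulk velocity, $\rho U_i=\int f v_i\sqrt{\mu}\,dv$, so $|\rho(x,t)U_i(x,t)|\le\|v_i\sqrt{\mu}\|_{L^2_v}\|f(x,\cdot,t)\|_{L^2_v}\le C\sqrt{\mathcal{E}(t)}$; combining with (1), for $\mathcal{E}(t)$ small enough $\rho\ge 1/2$, hence $|U_i|\le 2|\rho U_i|\le C\sqrt{\mathcal{E}(t)}$, which is (2).

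For the temperature tensor one works through $\Theta$ and $T$. Expanding, $3\rho T=\int F|v-U|^2dv=\int(\mu+\sqrt{\mu}f)|v-U|^2dv$; the leading term $\int\mu|v-U|^2dv=3+\rho|U|^2-2\rho U\cdot U/\rho\cdots$ — more cleanly, expand $|v-U|^2=|v|^2-2v\cdot U+|U|^2$ and use that $\int\mu|v|^2dv=3$, $\int\mu v\,dv=0$. This yields $3\rho T=3+\int f|v|^2\sqrt{\mu}\,dv-2U\cdot\int fv\sqrt{\mu}\,dv+|U|^2\int f\sqrt{\mu}\,dv+(\text{terms quadratic in }U)$. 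Each velocity integral is again an $L^2_v$-pairing of $f$ against a polynomial times $\sqrt{\mu}$, hence $\lesssim\|f(x,\cdot,t)\|_{L^2_v}$, and the $U$-factors are already $\le C\sqrt{\mathcal{E}(t)}$ by (2); so $|3\rho T-3|\le C\sqrt{\mathcal{E}(t)}$, and dividing by $3\rho$ (bounded below by $3/2$) gives $|T-1|\le C\sqrt{\mathcal{E}(t)}$. Similarly $\rho\Theta_{ij}=\int F(v_i-U_i)(v_j-U_j)dv$ expands into $\delta_{ij}+(\text{$L^2_v$-pairings of }f)+(\text{$U$-terms})$, giving $|\Theta_{ii}-1|\le C\sqrt{\mathcal{E}(t)}$ and $|\Theta_{ij}|\le C\sqrt{\mathcal{E}(t)}$ for $i\ne j$. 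Finally, from $\mathcal{T}_{\nu}=(1-\nu)T\,Id+\nu\Theta$ we read off $\mathcal{T}_{\nu ii}-1=(1-\nu)(T-1)+\nu(\Theta_{ii}-1)$, so $|\mathcal{T}_{\nu ii}-1|\le C_\nu\sqrt{\mathcal{E}(t)}$ with $C_\nu$ depending on $|1-\nu|+|\nu|$ (bounded on $(-1/2,1)$), which is (3); and $\mathcal{T}_{\nu ij}=\nu\Theta_{ij}$ for $i\ne j$, so $|\mathcal{T}_{\nu ij}|\le\nu C\sqrt{\mathcal{E}(t)}$, which is (4).

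The only real care needed — the part I would not call routine — is organizing the expansions so that every remainder term is genuinely a product of an $L^2_v$-pairing $\langle f,\text{poly}\cdot\sqrt{\mu}\rangle$ (controlled by $\|f\|_{L^2_v}$ uniformly, since the Gaussian kills all polynomial growth) times a factor that is either $O(1)$ or already shown to be $O(\sqrt{\mathcal{E}(t)})$; in particular one must ensure the quadratic-in-$U$ terms do not spoil the \emph{linear}-in-$\sqrt{\mathcal{E}(t)}$ bound, which is immediate because $|U|^2\le C\mathcal{E}(t)\le C\sqrt{\mathcal{E}(t)}$ for $\mathcal{E}(t)\le 1$. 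One also uses throughout that $\mathcal{E}(t)$ small forces $\rho$ and $T$ to stay near $1$, so all divisions are legitimate; this is a standard bootstrap and poses no difficulty. Tracking the $\nu$-dependence is trivial here since $\nu$ enters only through the bounded coefficients $1-\nu$ and $\nu$, and the statement already absorbs this into $C_\nu$ and the explicit factor $\nu$ in (4).
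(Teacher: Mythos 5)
Your proposal is correct and follows essentially the same route as the paper: each moment is written as an $L^2_v$-pairing of $f$ against a polynomial times $\sqrt{\mu}$, bounded by Cauchy--Schwarz and the smallness of $\mathcal{E}(t)$, with the lower bound on $\rho$ justifying the divisions; the only cosmetic difference is that you estimate $T$ and $\Theta$ separately and then combine via $\mathcal{T}_{\nu}=(1-\nu)T\,Id+\nu\Theta$, while the paper estimates $\mathcal{T}_{\nu ii}$ directly. You also make explicit the Sobolev embedding $H^2(\mathbb{T}^3_x)\hookrightarrow L^\infty(\mathbb{T}^3_x)$ needed for the pointwise-in-$x$ control, which the paper uses only implicitly.
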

\begin{proof}
(1) We have from H\"{o}lder inequality
\begin{eqnarray*}
|\rho(x,t)-1|=\int_{\mathbb{R}^3} f\sqrt{\mu}dv\leq \|f\|_{L^2_x}\leq \sqrt{\mathcal{E}(t)}.
\end{eqnarray*}
(2) Using the lower bound estimate of $\rho$, H\"{o}lder inequality and $\int_{\mathbb{R}^3} \mu vdv=0$, we see that
\begin{eqnarray*}
|U_i|&=&\frac{1}{\rho}\left|\int_{\mathbb{R}^3} fv_i\sqrt{\mu}dv\right|
\leq\frac{\|f\|_{L^2_{x,v}}}{1-\sqrt{\mathcal{E}(t)}}\cr
&\leq&\frac{\sqrt{\mathcal{E}(t)}}{1-\sqrt{\mathcal{E}(t)}}\leq C\sqrt{\mathcal{E}(t)}.
\end{eqnarray*}
(3) For the upper bound of $\mathcal{T}_{ii}$, we compute as follows:
\begin{eqnarray*}
\mathcal{T}_{ii}&=&(1-\nu)T+\nu\Theta_{ii}\cr
&=&\frac{(1-\nu)}{3}\left\{\frac{1}{\rho}\int_{\mathbb{R}^3}(\mu+\sqrt{\mu}f)|v|^2dv-|U|^2\right\}
+\nu\left\{\frac{1}{\rho}\int_{\mathbb{R}^3}(\mu+\sqrt{\mu}f)v^2_idv-U^2_i\right\}\cr
&\leq&\frac{(1-\nu)}{3\rho}\left\{3+\int_{\mathbb{R}^3}f|v|^2\sqrt{\mu}dv\right\}
+\frac{\nu}{\rho}\left\{1+\int_{\mathbb{R}^3}fv_i^2\sqrt{\mu}dv\right\}\cr
&\leq&\frac{1-\nu}{3}\left\{\frac{3+\sqrt{15}\|f\|_{L^2_{x,v}}}{\rho}\right\}+\nu\left\{\frac{1+\sqrt{3}\|f\|_{L^2_{x,v}}}{\rho}\right\}\cr
&\leq&\frac{1+C_{\nu}\|f\|_{L^2_{x,v}}}{\rho}\cr
&\leq&\frac{1+C_{\nu}\sqrt{\mathcal{E}(t)}}{1-\sqrt{\mathcal{E}}(t)}.
\end{eqnarray*}
Therefore,
\begin{eqnarray}\label{ThetaUpper}
\mathcal{T}_{ii}-1\leq\frac{C\sqrt{\mathcal{E}(t)}}{1-\sqrt{\mathcal{E}(t)}}\leq C\sqrt{\mathcal{E}(t)}.
\end{eqnarray}
Using the lower bound estimate for $\rho$ and $U_i$, we estimate the lower bound similarly as
\begin{eqnarray*}
\mathcal{T}_{ii}&=&(1-\nu)T+\nu\theta_{ii}\cr
&=&\frac{(1-\nu)}{3\rho}\left\{\int_{\mathbb{R}^3}(\mu+\sqrt{\mu}f)|v|^2dv-|U|^2\right\}
+\frac{\nu}{\rho}\left\{\int_{\mathbb{R}^3}(\mu+\sqrt{\mu}f)v^2_idv-U^2_i\right\}\cr
&=&\frac{(1-\nu)}{3\rho}\left\{3+\int_{\mathbb{R}^3}f|v|^2\sqrt{\mu}dv-|U|^2\right\}
+\frac{\nu}{\rho}\left\{1+\int_{\mathbb{R}^3}fv_i^2\sqrt{\mu}dv-U^2_i\right\}\cr
&\geq&\frac{1-\nu}{3\rho}\left\{3-\sqrt{15}\|f\|_{L^2_{x,v}}-C\mathcal{E}(t)\right\}+\frac{\nu}{\rho}\left\{1-\sqrt{3}\|f\|_{L^2_{x,v}}-C\mathcal{E}(t)\right\}\cr
&\geq&\frac{1-C_{\nu}\|f\|_{L^2_{x,v}}-C\mathcal{E}(t)}{\rho}\cr
&\geq&\frac{1-C_{\nu}\sqrt{\mathcal{E}(t)}-CE(t)}{1+\sqrt{\mathcal{E}}(t)}\cr
&\geq&\frac{1-C_{\nu}\sqrt{\mathcal{E}(t)}}{1+\sqrt{\mathcal{E}}(t)}.
\end{eqnarray*}
Hence we have
\begin{eqnarray}\label{ThetaLower}
\mathcal{T}_{ii}-1\geq\frac{-C_{\nu}\sqrt{\mathcal{E}}(t)}{1+\sqrt{\mathcal{E}(t)}}\geq -C_{\nu}\sqrt{\mathcal{E}(t)}.
\end{eqnarray}
(\ref{ThetaUpper}) and (\ref{ThetaLower}) give the desired result for $\mathcal{T}_{ii}$ $(i=1,2,3)$.\newline
(4) $\mathcal{T}_{ij}$ can be estimated similarly as
\begin{eqnarray*}
|\mathcal{T}_{ij}|
&\leq&\frac{\nu}{\rho}\Big|\int_{\mathbb{R}^3}fv_iv_j\sqrt{\mu}dv\Big|+\nu|U_i||U_j|\cr
&\leq&\frac{\nu\|f\|_{L^2_{x,v}}}{1-\sqrt{\mathcal{E}(t)}}+\nu C\mathcal{E}(t)\cr
&\leq&\frac{\nu\sqrt{\mathcal{E}(t)}}{1-\sqrt{\mathcal{E}(t)}}+\nu C\mathcal{E}(t)\cr
&\leq&\nu C\sqrt{\mathcal{E}(t)}.
\end{eqnarray*}
\end{proof}
%
%
%
%
%
\begin{lemma}\label{ULestimatesofRhoUT1}
Suppose $\mathcal{E}(t)$ is sufficiently small. Then there exists a positive constant $C_{|\alpha|}>0$ and $C_{|\alpha|,\nu}>0$  such that
\begin{eqnarray*}
&&(1)~|\partial^{\alpha}\rho(x,t)|\leq \sqrt{\mathcal{E}(t)},\cr
&&(2)~|\partial^{\alpha}U(x,t)|\leq C_{|\alpha|}\mathcal{E}(t),\cr
&&(3)~|\partial^{\alpha}\mathcal{T}_{ij}(x,t)|\leq C_{|\alpha|,\nu}\mathcal{E}(t).
\end{eqnarray*}
\end{lemma}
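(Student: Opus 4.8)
The plan is to reduce every macroscopic field to a velocity moment of $f$ against the Gaussian and then differentiate in $x$. Since $F=\mu+\sqrt{\mu}f$, one has
\[
\rho-1=\langle f,\sqrt{\mu}\rangle_{L^2_v},\quad \rho U_i=\langle f,v_i\sqrt{\mu}\rangle_{L^2_v},\quad \rho\Theta_{ij}=\delta_{ij}+\langle f,v_iv_j\sqrt{\mu}\rangle_{L^2_v}-\rho U_iU_j,
\]
and hence, with $3\rho T=\mathrm{tr}(\rho\Theta)$ and $\mathcal{T}_{\nu}=(1-\nu)T\,Id+\nu\Theta$, every entry $\mathcal{T}_{\nu ij}$ is a rational function whose numerator is a polynomial in $\rho-1$, in the $U_k$, and in the moments $\langle f,v_kv_\ell\sqrt{\mu}\rangle_{L^2_v}$, and whose denominator is a power of $\rho$; the off-diagonal entries carry an overall factor $\nu$, which is the source of the $\nu$-dependence of the constants in $(3)$.

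First I would treat $\rho$: since $\partial^\alpha_x$ commutes with the $v$-integration, $\partial^\alpha\rho=\langle\partial^\alpha_x f,\sqrt{\mu}\rangle_{L^2_v}$, so Cauchy--Schwarz in $v$ (the Gaussian makes every polynomial moment finite) gives $|\partial^\alpha\rho(x,t)|\le C\|\partial^\alpha_x f(x,\cdot,t)\|_{L^2_v}$ pointwise in $x$. To turn this into a genuine pointwise bound I would use the Sobolev embedding $H^2(\mathbb{T}^3_x)\hookrightarrow L^\infty(\mathbb{T}^3_x)$, which yields $\|\partial^\alpha\rho(\cdot,t)\|_{L^\infty_x}\le C\sum_{|\alpha'|\le|\alpha|+2}\|\partial^{\alpha'}_x f(t)\|_{L^2_{x,v}}\le C\sqrt{\mathcal{E}(t)}$ provided $|\alpha|+2\le N$; this is precisely where the hypothesis $N\ge4$ enters.

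For $U$ and $\mathcal{T}_{\nu ij}$ the only new ingredient is the division by $\rho$. Writing $U_i=\rho^{-1}(\rho U_i)$, expanding $\partial^\alpha$ by the Leibniz rule, and expanding each factor $\partial^\gamma(\rho^{-1})$ by the Fa\`a di Bruno formula into a polynomial in $\rho^{-1}$ and the derivatives $\partial^{\gamma'}\rho$ with $|\gamma'|\le|\gamma|$, one reduces everything to already-controlled quantities: by Lemma \ref{ULestimatesofRhoUT}, once $\mathcal{E}(t)$ is small one has $\rho\ge 1/2$, so $\rho^{-1}$ is uniformly bounded; $\rho-1$ and every $\partial^{\gamma'}\rho$ with $|\gamma'|\ge1$ is $O(\sqrt{\mathcal{E}(t)})$ by the previous step; every moment $\partial^\gamma_x(\rho U_i)=\langle\partial^\gamma_x f,v_i\sqrt{\mu}\rangle_{L^2_v}$ is $O(\sqrt{\mathcal{E}(t)})$ by the same Cauchy--Schwarz/Sobolev argument; and each $U_k$ is $O(\sqrt{\mathcal{E}(t)})$. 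Collecting the terms yields $(2)$, and the identical bookkeeping applied to $\rho\Theta$, then to $3\rho T=\mathrm{tr}(\rho\Theta)$, and finally to $\mathcal{T}_{\nu}=(1-\nu)T\,Id+\nu\Theta$ yields $(3)$, the off-diagonal factor $\nu$ making the constants $\nu$-dependent.

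I do not expect a genuine obstacle here; the statement is of the ``routine but tedious'' kind. The two points requiring care are: (i) the Sobolev step, which restricts the admissible order of differentiation to $|\alpha|\le N-2$ and hence forces $N\ge4$; and (ii) organising the Leibniz--Fa\`a di Bruno expansion of the quotients so that every surviving term is a product of one already-estimated small quantity with a uniformly bounded factor --- legitimate precisely because the smallness of $\mathcal{E}(t)$ keeps $\rho$ uniformly away from $0$.
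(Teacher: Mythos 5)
Your proposal is correct and follows essentially the same route as the paper: write each macroscopic field as a $v$-moment of $f$ divided by a power of $\rho$, differentiate by the Leibniz/chain rule, bound the moments by Cauchy--Schwarz, and use the lower bound on $\rho$ supplied by Lemma \ref{ULestimatesofRhoUT}. You are in fact more careful than the paper on one point: the Sobolev embedding $H^2(\mathbb{T}^3_x)\hookrightarrow L^\infty_x$ needed to convert the $L^2_{x,v}$-based energy into pointwise-in-$x$ bounds (with the attendant restriction $|\alpha|\le N-2$) is left implicit in the paper's proof, which passes directly from $\int|\partial^{\alpha}f|\sqrt{\mu}\,dv$ to $\|\partial^{\alpha}f\|_{L^2_{x,v}}$.
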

\begin{proof}
(1) Since $\partial^{\alpha}\mu=0$, we have
\begin{eqnarray*}
|\partial^{\alpha}\rho|&=&\left|\partial^{\alpha}\left(\int_{\mathbb{R}^3} \mu+f\sqrt{\mu}dv\right)\right|=\int |\partial^{\alpha} f|\sqrt{\mu}dv\cr
&\leq&\|\partial^{\alpha}f\|_{L^2_{x,v}}\leq \sqrt{\mathcal{E}(t)}.
\end{eqnarray*}
(2) A straightforward computation using $U=\frac{1}{\rho}\int f v\sqrt{\mu}dv$ and the chain rule gives to
\begin{eqnarray*}
\displaystyle|\partial^{\alpha}U|\leq
\frac{C_{|\alpha|}}{\rho^{2|\alpha|}}
\left(\sum_{|\alpha_1|\leq N}\int_{\mathbb{R}^3}|\partial^{\alpha_1}f||v|\sqrt{\mu}dv\right)\left(1+\sum_{|\alpha_2|\leq N}|\partial^{\alpha_2}\rho|\right)^{|\alpha|}.
\end{eqnarray*}
Then the use of H\"{o}lder inequality and the estimate (1) leads to
\begin{eqnarray*}
|\partial^{\alpha}U|\leq\displaystyle C_{|\alpha|}\frac{\sqrt{\mathcal{E}(t)}\big(1+\sqrt{\mathcal{E}(t)}\big)^{|\alpha|}}{\big(1-\mathcal{E}(t)\big)^{2|\alpha|}}
\leq C_{|\alpha|}\sqrt{\mathcal{E}(t)}.
\end{eqnarray*}
(3) Recall $\mathcal{T}_{ij}=\frac{1-\nu}{3\rho}\left\{\int f|v|^2\sqrt{\mu}dv-|U|^2\right\}+\frac{\nu}{\rho}\left\{\int fv^2_i\sqrt{\mu}dv-U^2_i\right\}$.
Therefore, by the same argument as in (2) above, we have
\begin{eqnarray*}
\displaystyle|\partial^{\alpha}\mathcal{T}_{ij}|&\leq&
\frac{C_{|\alpha|,\nu}}{\rho^{2|\alpha|}}
\left(\sum_{|\alpha_1|\leq N}\int_{\mathbb{R}^3}|\partial^{\alpha_1}f||v|^2\sqrt{\mu}dv\right)\left(1+\sum_{|\alpha_2|\leq N}|\partial^{\alpha_2}\rho|\right)^{|\alpha|}
+C_{|\alpha|,\nu}\mathcal{E}(t)\cr
&\leq&\displaystyle C_{|\alpha|,\nu}\frac{\sqrt{\mathcal{E}(t)}\big(1+\sqrt{\mathcal{E}(t)}\big)^{|\alpha|}}{\big(1-\mathcal{E}(t)\big)^{2|\alpha|}}
+C_{|\alpha|,\nu}\mathcal{E}(t)\cr
&\leq&C_{|\alpha|,\nu}\sqrt{\mathcal{E}(t)}.
\end{eqnarray*}
\end{proof}
%
%
%
%
\begin{lemma} \label{MacroscopicField_{theta}}Let $\mathcal{E}(t)$ be sufficiently small. Then, we have positive constants $C>0$ and $C_{\nu}>0$ independent of $\theta$ such that
\begin{eqnarray*}
&&(1)~|\rho_{\theta}(x,t)-1|\leq C\sqrt{\mathcal{E}(t)},\cr
&&(2)~|U_{\theta}(x,t)|\leq C\sqrt{\mathcal{E}(t)},\cr
&&(3)~\big|\mathcal{T}_{\theta ii }(x,t)-1\big|\leq C_{\nu}\sqrt{\mathcal{E}(t)},\quad(i=1,2,3),\cr
&&(4)~\big|\mathcal{T}_{\theta ij}(x,t)\big|\leq C_{\nu}\sqrt{\mathcal{E}(t)},\hspace{0.95cm}(i<j).
\end{eqnarray*}
\end{lemma}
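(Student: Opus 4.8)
The plan is to deduce the whole lemma from Lemma~\ref{ULestimatesofRhoUT}, using only the explicit definitions of the transitional fields and the affine relation between $G_\nu$ and $\mathcal{T}_\nu$ encoded in $\mathcal{J}_\nu$. Parts (1) and (2) are immediate: from $\rho_\theta-1=\theta(\rho-1)$ and $0\le\theta\le1$ we get $|\rho_\theta-1|=\theta|\rho-1|\le|\rho-1|\le C\sqrt{\mathcal{E}(t)}$ by Lemma~\ref{ULestimatesofRhoUT}(1), and for $\mathcal{E}(t)$ small this forces $\rho_\theta\ge 1/2$ uniformly in $\theta$; from $\rho_\theta U_\theta=\theta\rho U$ and $0\le\theta\rho\le\rho_\theta$ we get $|U_\theta|=\frac{\theta\rho}{\rho_\theta}|U|\le|U|\le C\sqrt{\mathcal{E}(t)}$ by Lemma~\ref{ULestimatesofRhoUT}(2). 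All constants are visibly independent of $\theta$.

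For (3) and (4) I would first record the identity obtained by expanding the moments in the definition of $G_\nu$,
\[
G_\nu=\frac{\rho}{2}\big(\mathcal{T}_\nu-Id\big)+\frac{\rho}{2}\Big(\frac{(1-\nu)|U|^2}{3}Id+\nu\,U\otimes U\Big),
\]
which exhibits $G_\nu$ as an affine function of $\mathcal{T}_\nu$ for fixed $\rho,U$ and is precisely the structure of the change of variables $(\rho,U,\mathcal{T}_\nu)\mapsto(\rho,\rho U,G)$ whose Jacobian is $\mathcal{J}_\nu$ in Lemma~\ref{Jacobian}. Solving for $\mathcal{T}_\nu$, substituting $G_\theta=\theta G$ together with $\rho_\theta$ and $U_\theta$, and simplifying gives
\[
\mathcal{T}_\theta-Id=\frac{\theta\rho}{\rho_\theta}\Big[(\mathcal{T}_\nu-Id)+\frac{(1-\nu)|U|^2}{3}Id+\nu\,U\otimes U\Big]-\frac{(1-\nu)|U_\theta|^2}{3}Id-\nu\,U_\theta\otimes U_\theta,
\]
which correctly reduces to $\mathcal{T}_\theta=Id$ when $F=\mu$, in agreement with $\mathcal{J}^{-1}_\nu|_{F=\mu}=I(4,6;1,2)$.

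It then remains to bound the right-hand side entrywise. We have $0\le\theta\rho/\rho_\theta\le1$; by Lemma~\ref{ULestimatesofRhoUT}(3)--(4) each entry of $\mathcal{T}_\nu-Id$ is at most $C_\nu\sqrt{\mathcal{E}(t)}$; and the terms quadratic in $U$ and in $U_\theta$ are at most $C_\nu\mathcal{E}(t)\le C_\nu\sqrt{\mathcal{E}(t)}$ by Lemma~\ref{ULestimatesofRhoUT}(2) and part~(2) above. Hence $|\mathcal{T}_{\theta ij}-\delta_{ij}|\le C_\nu\sqrt{\mathcal{E}(t)}$ with $C_\nu$ independent of $\theta$, which is (3) on the diagonal and (4) off it. There is no genuine obstacle here: the computation is routine once the displayed identity is in hand, and the only points needing care are the uniform lower bound $\rho_\theta\ge1/2$, which legitimizes dividing by $\rho_\theta$, and checking that the honestly nonlinear pieces ($|U|^2$, $U\otimes U$ and their $\theta$-analogues) are of higher order $\mathcal{E}(t)$ and so are absorbed into the $\sqrt{\mathcal{E}(t)}$ bound.
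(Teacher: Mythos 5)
Your proposal is correct and follows essentially the same route as the paper: parts (1) and (2) are handled identically, and for (3)--(4) you derive the same kind of algebraic identity expressing $\mathcal{T}_\theta-Id$ in terms of $\mathcal{T}_\nu-Id$, $\theta\rho/\rho_\theta$, and the quadratic velocity terms from the relation $G_\theta=\theta G$, then invoke Lemma \ref{ULestimatesofRhoUT}. Writing the identity once in matrix form (rather than the paper's component-wise computations summing over the trace and treating $i=j$ and $i\neq j$ separately) is a mild streamlining, not a different argument.
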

\begin{proof}
(1) By Lemma \ref{ULestimatesofRhoUT} and the definition of $\rho_{\theta}$, we have
\begin{eqnarray*}
|\rho_{\theta}-1|=\theta |\rho-1|\leq \theta\sqrt{\mathcal{E}(t)}\leq \sqrt{\mathcal{E}(t)}.
\end{eqnarray*}
(2) follows directly from (1), Lemma \ref{ULestimatesofRhoUT} and the definition of $U_{\theta}$:
\begin{eqnarray*}
&&U_{\theta}=\frac{\theta}{\rho_{\theta}} \rho U.
\end{eqnarray*}
(3) We divide the case into $i=j$ and $i\neq j$.
When $i=j$, we have from the definition of $G_{\theta}$ that for $i=1,2,3$:
\begin{eqnarray}\label{recall}
\begin{split}
&\hspace{0.6cm}\frac{1-\nu}{3}\left(\frac{\rho_{\theta}(\Theta_{\theta11}+\Theta_{\theta22}+\Theta_{\theta33})+\rho_{\theta}|U_{\theta}|^2}{2}\right)
+\nu\left(\frac{\rho_{\theta}\Theta_{\theta ii}+\rho U^2_{\theta i }}{2}\right)-\frac{\rho_{\theta}}{2}\cr
&\hspace{1.2cm}=\theta\left[\frac{1-\nu}{3}\left(\frac{\rho (\Theta_{11}+\Theta_{22}+\Theta_{33})+\rho|U|^2}{2}\right)+
\nu\left(\frac{\rho\Theta_{ii}+\rho U^2_{i }}{2}\right)-\frac{\rho}{2}\right],
\end{split}
\end{eqnarray}
Summing over $i=1,2,3$, we obtain
\begin{eqnarray}\label{putitback}
\begin{split}
\frac{\rho_{\theta}(\Theta_{\theta11}+\Theta_{\theta22}+\Theta_{\theta33})}{2}
&=\theta\frac{\rho (\Theta_{11}+\Theta_{22}+\Theta_{33})}{2}\cr
&+\frac{\rho_{\theta}|U_{\theta}|^2-\theta\rho|U|^2}{2}+\frac{3}{2}(\rho_{\theta}-\rho\theta).
\end{split}
\end{eqnarray}
We substitute (\ref{putitback}) back to (\ref{recall}) to get
\begin{eqnarray}\label{putitback2}
\begin{split}
\nu\left(\frac{\rho_{\theta}\Theta_{\theta ii}+\rho_{\theta} U^2_{\theta i }}{2}\right)
&=-\frac{1-\nu}{3}
\left[\theta\left(\frac{\rho(\Theta_{11}+\Theta_{22}+\Theta_{33})+|U|^2}{2}\right)\right]
-\frac{1-\nu}{2}(\rho_{\theta}-\theta\rho)\cr
&+\theta
\left[\frac{1-\nu}{3}\left(\frac{\rho(\Theta_{11}+\Theta_{22}+\Theta_{33})+|U|^2}{2}\right)+\nu\left(\frac{\rho\Theta_{ii}+U^2_{i }}{2}\right)\right]\cr
&+\frac{\rho_{\theta}-\theta\rho}{2}\cr
&=\nu\theta\left(\frac{\rho\Theta_{ii}+\rho U_i^2}{2}\right)+\frac{\nu}{2}(\rho_{\theta}-\theta\rho).
\end{split}
\end{eqnarray}
In view of (\ref{putitback}) and (\ref{putitback2}), we see that 
\begin{eqnarray}\label{putitback3}
\begin{split}
\mathcal{T}_{\theta ii}&=\theta\frac{1-\nu}{3}\left\{\frac{\rho (\Theta_{11}+\Theta_{22}+\Theta_{33})}{\rho_{\theta}}\right\}
+\frac{1-\nu}{3}\left\{\frac{\rho_{\theta}|U_{\theta}|^2-\theta\rho|U|^2}{\rho_{\theta}}\right\}\cr
&+(1-\nu)\left(\frac{\rho_{\theta}-\theta\rho}{\rho_{\theta}}\right)
+\nu\theta\left(\frac{\rho\Theta_{ii}+\rho U_i^2}{\rho_{\theta}}\right)+\nu\left(\frac{\rho_{\theta}-\theta\rho}{\rho_{\theta}}\right)\cr
&=\theta\left[\frac{1-\nu}{3}\left\{\frac{\rho (\Theta_{11}+\Theta_{22}+\Theta_{33})}{\rho_{\theta}}\right\}
+\nu\theta\left(\frac{\rho\Theta_{ii}+\rho U_i^2}{\rho_{\theta}}\right)\right]\cr
&+\frac{1-\nu}{3}\left\{\frac{\rho_{\theta}|U_{\theta}|^2-\theta\rho|U|^2}{\rho_{\theta}}\right\}
+\left(\frac{\rho_{\theta}-\theta\rho}{\rho_{\theta}}\right)\cr
&=\frac{\rho}{\rho_{\theta}}\mathcal{T}_{ii}+\frac{1-\nu}{3}\left\{\frac{\rho_{\theta}|U_{\theta}|^2-\theta\rho|U|^2}{\rho_{\theta}}\right\}
+\left(\frac{\rho_{\theta}-\theta\rho}{\rho_{\theta}}\right).
\end{split}
\end{eqnarray}
Therefore, applying Lemma \ref{ULestimatesofRhoUT}, Lemma \ref{ULestimatesofRhoUT1} and the estimate (1) and (2) above, we find that
\begin{eqnarray*}
\mathcal{T}_{\theta ii}&\leq&\frac{\theta(1+C_{\nu}\sqrt{\mathcal{E}(t)})^2}{1-\sqrt{\mathcal{E}(t)}}
+\frac{1-\nu}{3}\frac{(1+\sqrt{\mathcal{E}(t)})\mathcal{E}(t)}{1-\sqrt{\mathcal{E}(t)}}
+\frac{1+\sqrt{\mathcal{E}(t)}-\theta(1-\sqrt{\mathcal{E}}(t))}{1-\sqrt{\mathcal{E}(t)}}\cr
&\leq&\frac{1+C\theta\sqrt{\mathcal{E}(t)}+C\sqrt{\mathcal{E}(t)}}{1-\sqrt{\mathcal{E}(t)}}.
\end{eqnarray*}
This leads to
\begin{eqnarray}\label{leadsto1}
\mathcal{T}_{\theta ii}-1\leq \frac{C\theta\sqrt{\mathcal{E}}(t)+C\sqrt{\mathcal{E}(t)}}{1-\sqrt{\mathcal{E}(t)}}\leq C\sqrt{\mathcal{E}(t)}.
\end{eqnarray}
Lower bound estimate for $\mathcal{T}_{\theta ii}$ can be derived analogously as
\begin{eqnarray}\label{gives1}
\mathcal{T}_{\theta ii}-1\geq -C\sqrt{\mathcal{E}}(t).
\end{eqnarray}
Combining (\ref{leadsto1}) and (\ref{gives1}), we obtain
\begin{eqnarray*}
|\mathcal{T}_{\theta ii}-1|\leq C\sqrt{\mathcal{E}}(t).
\end{eqnarray*}
The case for $i\neq j$ is simpler. We first observe from the definition of $G_{\theta ij}$ that
\begin{eqnarray*}
\nu\left(\frac{\rho_{\theta}\Theta_{\theta ij}+\rho U_{\theta i }U_{\theta j}}{2}\right)=
\theta\nu\left(\frac{\rho\Theta_{ij}+\rho U_{i }U_{j}}{2}\right),
\end{eqnarray*}
Hence we have
\begin{eqnarray}\label{46}
\mathcal{T}_{\theta ij}=\frac{\rho}{\rho_{\theta}}\theta (\Theta_{ij}+U_iU_j)-U_{\theta i}U_{\theta j}.
\end{eqnarray}
Then we can proceed similarly to obtain the desired result.
\end{proof}
\begin{lemma}\label{Derivatives} Let $\mathcal{E}(t)$ be sufficiently small. Then we have
\begin{eqnarray*}
&&(1)~|\partial^{\alpha}\rho_{\theta}(x,t)|\leq \sqrt{\mathcal{E}(t)},\cr
&&(2)~|\partial^{\alpha}U_{\theta}(x,t)|\leq C_{|\alpha|}\mathcal{E}(t),\cr
&&(3)~|\partial^{\alpha}\mathcal{T}_{\theta}(x,t)|\leq C_{|\alpha|}\mathcal{E}(t).
\end{eqnarray*}
for some positive constant $C_{|\alpha|}$.
\end{lemma}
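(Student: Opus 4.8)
The plan is to mimic the proof of Lemma \ref{ULestimatesofRhoUT1} almost verbatim, exploiting the single structural fact that the interpolation parameter $\theta\in[0,1]$ does not depend on $x$. Thus for any multi-index $\alpha$ with $|\alpha|\geq 1$, the operator $\partial^\alpha$ annihilates the additive constant $1-\theta$ and only produces a bounded factor $\theta$ in front of genuine $x$-derivatives; consequently the spatial derivatives of $\rho_\theta$, $U_\theta$ and $\mathcal{T}_\theta$ reduce to those of $\rho$, $\rho U$ and $\mathcal{T}_\nu$, all of which are already estimated.

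First, for (1) I would simply note that $\partial^\alpha\rho_\theta=\theta\,\partial^\alpha\rho$ for $|\alpha|\geq 1$, so Lemma \ref{ULestimatesofRhoUT1}(1) together with $\theta\leq 1$ gives the bound at once. Next, for (2) I would start from $U_\theta=\theta\rho U/\rho_\theta$ and differentiate by the Leibniz and chain rules; the result is a finite sum of products of one factor $\partial^{\alpha_1}(\rho U)$, a negative power $\rho_\theta^{-k}$, and lower-order derivatives $\partial^{\alpha_2}\rho_\theta,\dots$ with the orders summing to $|\alpha|$. The uniform-in-$\theta$ lower bound $\rho_\theta\geq 1-\sqrt{\mathcal{E}(t)}$ from Lemma \ref{MacroscopicField_{theta}}(1) controls the denominators, while the numerators are bounded by Lemma \ref{ULestimatesofRhoUT}, Lemma \ref{ULestimatesofRhoUT1} and part (1) exactly as in the proof of Lemma \ref{ULestimatesofRhoUT1}(2); the smallness of $\mathcal{E}(t)$ absorbs the harmless factors $\bigl(1+\sqrt{\mathcal{E}(t)}\bigr)^{|\alpha|}$ that appear.

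For (3) I would differentiate the explicit representations already produced inside the proof of Lemma \ref{MacroscopicField_{theta}}, namely (\ref{putitback3}) for the diagonal entries $\mathcal{T}_{\theta ii}$ and (\ref{46}) for the off-diagonal entries $\mathcal{T}_{\theta ij}$ $(i<j)$. Expanding by the Leibniz rule, every resulting term is a negative power of $\rho_\theta$ times factors drawn from $\partial^{\alpha'}\rho$, $\partial^{\alpha'}(\rho U)$, $\partial^{\alpha'}U_\theta$, $\partial^{\alpha'}U$, $\partial^{\alpha'}\mathcal{T}_{ij}$ and the bounded constants $\theta$, $1-\theta$; in particular, since $\rho_\theta-\theta\rho=1-\theta$ is constant in $x$, the derivatives of the last term of (\ref{putitback3}) fall only on the denominator and are controlled exactly as in part (1). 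Combining the $L^\infty$ bounds of Lemmas \ref{ULestimatesofRhoUT} and \ref{MacroscopicField_{theta}} with the derivative bounds of Lemma \ref{ULestimatesofRhoUT1} and parts (1)--(2), and again using the smallness of $\mathcal{E}(t)$, yields the claim.

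The only step requiring any care is the combinatorial bookkeeping of the Leibniz/chain-rule expansion of $1/\rho_\theta$ up to order $|\alpha|\leq N$ — checking that each term carries at least one genuine derivative of an $f$-dependent quantity and hence is $O(\sqrt{\mathcal{E}(t)})$ or smaller, with the denominators staying bounded away from zero uniformly in $\theta$. But this is precisely the estimate already performed for the untilded fields in Lemma \ref{ULestimatesofRhoUT1}, and the uniform-in-$\theta$ lower bound on $\rho_\theta$ lets it go through without change; I do not expect any genuinely new obstacle.
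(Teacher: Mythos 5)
Your proposal follows exactly the route the paper intends: the paper omits the proof with the remark that it is ``almost identical to Lemma \ref{ULestimatesofRhoUT1},'' and your argument is precisely that adaptation, using $\rho_\theta=\theta\rho+(1-\theta)$, $U_\theta=\theta\rho U/\rho_\theta$, and the representations (\ref{putitback3}) and (\ref{46}) together with the uniform-in-$\theta$ lower bound on $\rho_\theta$. The only cosmetic remark is that, as in Lemma \ref{ULestimatesofRhoUT1} itself, the argument actually yields $O(\sqrt{\mathcal{E}(t)})$ rather than $O(\mathcal{E}(t))$ on the right-hand side, an inconsistency already present in the paper's own statements.
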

\begin{proof}
The proof is almost identical to Lemma \ref{ULestimatesofRhoUT1}. We omit the proof.
\end{proof}
%
%
%
%
%
\begin{lemma}\label{Det_Estimate}Let $\mathcal{E}(t)$ be sufficiently small. Then determinant of the temperature tensor $\mathcal{T}_{\nu}$ satisfies
the following estimates:
\begin{eqnarray*}
&&(1)~ |\partial^{\alpha}\{\det\mathcal{T}_{\nu}\}|,~ |\partial^{\alpha}\{\det\mathcal{T}_{\nu\theta}\}|\leq C\sqrt{\mathcal{E}(t)},\cr
&&(2)~ |\det\mathcal{T}_{\nu}|,~ |\det\mathcal{T}_{\nu\theta}|\geq \frac{1}{2},
\end{eqnarray*}
for a positive constant $C$ independent of $\nu$.
\end{lemma}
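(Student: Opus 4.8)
The strategy is to treat $\det\mathcal T_\nu$ (and $\det\mathcal T_{\nu\theta}$) simply as a fixed cubic polynomial in the matrix entries and to feed in the entry estimates already obtained. The explicit expansion of the $3\times3$ determinant gives
\[
\det\mathcal T_\nu=\mathcal T_{11}\mathcal T_{22}\mathcal T_{33}-\mathcal T_{23}^2\mathcal T_{11}-\mathcal T_{31}^2\mathcal T_{22}-\mathcal T_{12}^2\mathcal T_{33}
\]
(cf.\ (\ref{detT})), and the same formula holds for $\det\mathcal T_{\nu\theta}$ since $\mathcal T_{\nu\theta}=(1-\nu)T_\theta\,Id+\nu\Theta_\theta$ is again symmetric. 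I will use throughout that the $\nu$-dependent constants in Lemmas \ref{ULestimatesofRhoUT}--\ref{Derivatives} are built only from $1-\nu$, $\nu$ and $1+2\nu$, hence are bounded uniformly for $\nu\in(-1/2,1)$; accordingly I write all of them as a single $C$.

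For (2), I would insert the zeroth-order bounds of Lemma \ref{ULestimatesofRhoUT}, i.e.\ $1-C\sqrt{\mathcal E(t)}\le\mathcal T_{ii}\le1+C\sqrt{\mathcal E(t)}$ for $i=1,2,3$ and $|\mathcal T_{ij}|\le C\sqrt{\mathcal E(t)}$ for $i\ne j$. Then $\mathcal T_{11}\mathcal T_{22}\mathcal T_{33}\ge(1-C\sqrt{\mathcal E(t)})^3\ge1-3C\sqrt{\mathcal E(t)}$, while each off-diagonal product obeys $\mathcal T_{ij}^2\mathcal T_{kk}\le C^2\mathcal E(t)(1+C\sqrt{\mathcal E(t)})$, so that $\det\mathcal T_\nu\ge1-C'\sqrt{\mathcal E(t)}\ge\frac12$ once $\mathcal E(t)$ is small enough; in particular $|\det\mathcal T_\nu|\ge\frac12$. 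The case of $\mathcal T_{\nu\theta}$ is word for word the same with Lemma \ref{MacroscopicField_{theta}} replacing Lemma \ref{ULestimatesofRhoUT}, and the resulting constant is $\theta$-independent.

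For (1), which I read as the statement for $1\le|\alpha|\le N$ (the case $\alpha=0$ being part (2)), I would differentiate the cubic formula by the Leibniz rule: $\partial^\alpha\{\det\mathcal T_\nu\}$ is, up to combinatorial constants, a finite sum of products $\partial^{\beta^{(1)}}\mathcal T_{i_1j_1}\,\partial^{\beta^{(2)}}\mathcal T_{i_2j_2}\,\partial^{\beta^{(3)}}\mathcal T_{i_3j_3}$ with $\beta^{(1)}+\beta^{(2)}+\beta^{(3)}=\alpha$. Since $|\alpha|\ge1$, at least one $\beta^{(r)}$ is nonzero, and for that factor Lemma \ref{ULestimatesofRhoUT1}(3) gives $|\partial^{\beta^{(r)}}\mathcal T_{ij}|\le C\sqrt{\mathcal E(t)}$, whereas each of the other two factors is $O(1)$ (by Lemma \ref{ULestimatesofRhoUT} if its multi-index is $0$, by Lemma \ref{ULestimatesofRhoUT1} otherwise). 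Hence every term is $\le C\sqrt{\mathcal E(t)}$, and summing the finitely many of them gives $|\partial^\alpha\{\det\mathcal T_\nu\}|\le C\sqrt{\mathcal E(t)}$; replacing Lemmas \ref{ULestimatesofRhoUT},\ref{ULestimatesofRhoUT1} by Lemmas \ref{MacroscopicField_{theta}},\ref{Derivatives} handles $\partial^\alpha\{\det\mathcal T_{\nu\theta}\}$ with $\theta$-independent constants.

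I do not expect a genuine obstacle here; the two points that merit a little care are checking that the constants imported from the earlier lemmas are really uniform in $\nu$ on $(-1/2,1)$ — which is clear because they are polynomial in $\nu$ with no vanishing denominators in that range — and the Leibniz bookkeeping in (1), namely observing that a derivative of order $\ge1$ must land on at least one of the three factors, so that every resulting product keeps a factor of $\sqrt{\mathcal E(t)}$ and the $O(1)$-size diagonal entries cause no trouble.
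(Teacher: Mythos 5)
Your proposal is correct and follows essentially the same route as the paper: the paper likewise recalls the explicit cubic expansion (\ref{detT}), obtains (2) from $\mathcal{T}_{ii}=1+o(\mathcal{E}(t))$, $\mathcal{T}_{ij}=o(\mathcal{E}(t))$, and disposes of (1) by "direct application" of the derivative estimates of Lemmas \ref{ULestimatesofRhoUT}--\ref{Derivatives}, which is exactly your Leibniz bookkeeping spelled out. (Your observation that the constants are polynomial in $\nu$ with no vanishing denominators on $(-1/2,1)$ is the right justification for the $\nu$-uniformity, which the paper leaves implicit.)
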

\begin{proof}
We recall the explicit formula for $\det{\mathcal{T}}_{\nu}$ derived in the proof of Lemma 2.2: 
\begin{eqnarray*}
\begin{split}
\det\mathcal{T}_{\nu}&=\mathcal{T}_{11}\mathcal{T}_{22}\mathcal{T}_{33}-\mathcal{T}_{23}^2\mathcal{T}_{11}-\mathcal{T}_{31}^2\mathcal{T}_{22}
-\mathcal{T}^2_{12}\mathcal{T}_{33},\cr
\det\mathcal{T}_{\theta}&=\mathcal{T}_{\theta11}\mathcal{T}_{\theta22}\mathcal{T}_{\theta33}-\mathcal{T}_{\theta23}^2\mathcal{T}_{\theta11}
-\mathcal{T}_{\theta31}^2\mathcal{T}_{\theta22}
-\mathcal{T}^2_{\theta12}\mathcal{T}_{\theta33}.
\end{split}
\end{eqnarray*}
Then (1) follow from the direct application of the estimates on the derivatives of the macroscopic fields in the preceding lemmas.
To prove (2), we recall from Lemma \ref{ULestimatesofRhoUT1} and Lemma \ref{MacroscopicField_{theta}} that
\begin{eqnarray*}
\mathcal{T}_{ii}=1+o(\mathcal{E}(t))~(i=1,2,3),~\mathcal{T}_{ij}=o~(\mathcal{E}(t)) ~(i\neq j).
\end{eqnarray*}
which leads to
\begin{eqnarray*}
\det\mathcal{T}_{\nu},~\det\mathcal{T}_{\theta\nu}&=&\left\{1+o(\mathcal{E}(t))\right\}^3-1+3\left\{o(\mathcal{E}(t))\right\}^2\left\{1+o(\mathcal{E}(t))\right\}\cr
&\geq&1-o(\mathcal{E}(t))\cr
&\geq&\frac{1}{2}
\end{eqnarray*}
for sufficiently small $\mathcal{E}(t)$.
\end{proof}
\subsection{Uniform estimate on the temperature tensor}
Recall that the nonlinear perturbation $\Gamma(f)$ contains inverse of the temperature tensor $\mathcal{T}^{-1}_{\nu\theta}$:
\begin{eqnarray*}
Q^{\mathcal{M}}_{ij}
=\frac{1}{\sqrt{\mu}}\sum_{i,j}\frac{P^{\mathcal{M}}_{ij}(\rho,v-U_{\theta},U_{\theta},\mathcal{T}^{-1}_{\theta},\nu)}{R^{\mathcal{M}}_{ij}(\rho_{\theta},\det\mathcal{T}_{\theta})}
\exp\left(-\frac{1}{2}(v-U_{\theta})^{\top}\mathcal{T}^{-1}_{\theta}(v-U_{\theta}))\right).
\end{eqnarray*}
Now, since $\mathcal{T}_{\nu}$ (and $\mathcal{T}_{\theta}$) contains $\nu$,
rough estimates of its inverse may involve factors inversely proportional to $\nu$ in it, which make it impossible to derive estimates uniform around $\nu=0$.
This is a serious problem considering that the $\nu=0$ corresponds to the classical BGK model.
In what follows, we will carefully investigate the temperature tensor $\mathcal{T}_{\nu}$ and show that the seemingly problematic $1/\nu$ factor
actually does not cause any harm. The key observation is that $\mathcal{T}_{\nu}$ is essentially equivalent to the temperature $T$
under our assumptions on $\nu$.
%
%
%
%
\begin{proposition}\label{Equivalence T}
Let $-1/2<\nu<1$. Define constant $C_{\nu1}$ and $C_{\nu2}$ by
\begin{eqnarray*}
C_{\nu1}=\min\{1-\nu, 1+2\nu\},\quad C_{\nu2}=\max\{1-\nu, 1+2\nu\}.
\end{eqnarray*}
 Then the temperature tensor is comparable to the temperature in the following sense:
\begin{eqnarray*}
C_{\nu1}T(x,t)Id\leq\mathcal{T}_{\nu}(x,t)\leq C_{\nu2}T(x,t)Id.
\end{eqnarray*}
Furthermore, if $\mathcal{E}(f(t))$ be sufficiently small, then
$\mathcal{T}_{\nu}$ is invertible and
\begin{eqnarray*}
\frac{C^{-1}_{\nu2}}{T(x,t)}Id\leq\mathcal{T}^{-1}_{\nu}(x,t)\leq \frac{C^{-1}_{\nu1}}{T(x,t)}Id.
\end{eqnarray*}
\end{proposition}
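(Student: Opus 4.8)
The plan is to diagonalize $\Theta$ and use the structure $\mathcal{T}_{\nu} = (1-\nu)T\,\mathrm{Id} + \nu\Theta$. Since $\Theta$ is a real symmetric matrix (it is the integral of $(v-U)\otimes(v-U)$ against a nonnegative density), we may write $\Theta = O^{\top}\mathrm{diag}(\lambda_1,\lambda_2,\lambda_3)\,O$ for an orthogonal $O$ and eigenvalues $\lambda_i \geq 0$. Then $\mathcal{T}_{\nu} = O^{\top}\mathrm{diag}\big((1-\nu)T + \nu\lambda_1,\ (1-\nu)T + \nu\lambda_2,\ (1-\nu)T + \nu\lambda_3\big)O$, so the eigenvalues of $\mathcal{T}_{\nu}$ are exactly $(1-\nu)T + \nu\lambda_i$. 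The key structural fact is that $\lambda_1 + \lambda_2 + \lambda_3 = \mathrm{tr}\,\Theta = 3T$, so each $\lambda_i$ satisfies $0 \leq \lambda_i \leq 3T$.

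With this in hand the first inequality is elementary. For the lower bound: if $\nu \geq 0$, then $(1-\nu)T + \nu\lambda_i \geq (1-\nu)T \geq C_{\nu 1} T$ since $\lambda_i \geq 0$ and $C_{\nu 1} = \min\{1-\nu, 1+2\nu\} \leq 1-\nu$; if $\nu < 0$, then $(1-\nu)T + \nu\lambda_i \geq (1-\nu)T + \nu(3T) = (1+2\nu)T \geq C_{\nu 1} T$ using $\lambda_i \leq 3T$ and $\nu < 0$. Symmetrically, for the upper bound: if $\nu \geq 0$ we use $\lambda_i \leq 3T$ to get $(1-\nu)T + \nu\lambda_i \leq (1+2\nu)T \leq C_{\nu 2} T$; if $\nu < 0$ we use $\lambda_i \geq 0$ to get $(1-\nu)T + \nu\lambda_i \leq (1-\nu)T \leq C_{\nu 2} T$. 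In all cases $C_{\nu 1} > 0$ precisely because $-1/2 < \nu < 1$ forces both $1-\nu > 0$ and $1+2\nu > 0$. Translating back through the orthogonal conjugation $O$, the eigenvalue bounds give $C_{\nu 1}T\,\mathrm{Id} \leq \mathcal{T}_{\nu} \leq C_{\nu 2}T\,\mathrm{Id}$ in the sense of quadratic forms.

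For the second assertion, I would first note that when $\mathcal{E}(f(t))$ is small, Lemma \ref{ULestimatesofRhoUT} gives $\rho > 0$ and $T = \frac{1}{3}\mathrm{tr}\,\mathcal{T}_{\nu} = \frac{1}{3}(\mathcal{T}_{\nu 11} + \mathcal{T}_{\nu 22} + \mathcal{T}_{\nu 33})$ close to $1$, hence strictly positive and bounded away from $0$; in particular the eigenvalues $(1-\nu)T + \nu\lambda_i$ are each $\geq C_{\nu 1}T > 0$, so $\mathcal{T}_{\nu}$ is invertible and positive definite. Inverting a positive-definite matrix reverses the Loewner order, so from $C_{\nu 1}T\,\mathrm{Id} \leq \mathcal{T}_{\nu} \leq C_{\nu 2}T\,\mathrm{Id}$ we immediately get $\frac{C_{\nu 2}^{-1}}{T}\mathrm{Id} \leq \mathcal{T}_{\nu}^{-1} \leq \frac{C_{\nu 1}^{-1}}{T}\mathrm{Id}$, which is the claim. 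The only thing requiring care is the nonnegativity of the $\lambda_i$, which hinges on $F = \mu + \sqrt{\mu}f \geq 0$ (so that $\Theta$ is a genuine covariance matrix); this is available by hypothesis in Theorem 1.2 and is preserved along the iteration scheme. I do not expect a genuine obstacle here — the whole proposition is really the two-line observation that the eigenvalues of $\mathcal{T}_{\nu}$ are convex combinations $(1-\nu)T + \nu\lambda_i$ with $\lambda_i \in [0, 3T]$, and the rest is bookkeeping with the Loewner order.
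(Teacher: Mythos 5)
Your proof is correct and is essentially the paper's argument: both hinge on the two-sided bound $0\leq\Theta\leq 3T\,Id$ (in the Loewner order, using $F\geq 0$ and $\operatorname{tr}\Theta=3T$), the case split on the sign of $\nu$, and the reversal of the order under inversion once $T$ is bounded away from zero for small $\mathcal{E}(t)$. The only cosmetic difference is that the paper establishes $k^{\top}\Theta k\leq 3T|k|^2$ by a pointwise Cauchy--Schwarz inequality inside the integral $\int F\{(v-U)\cdot k\}^2dv$ rather than by diagonalizing $\Theta$ and bounding its eigenvalues by the trace.
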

\begin{proof}
(1) We first observe from the definition of $\mathcal{T}_{\nu}$ that
\begin{eqnarray*}
\rho\mathcal{T}_{\nu}&=&\left(
\begin{array}{ccc}
(1-\nu) \rho T+\nu\rho \Theta_{11}&\nu\rho \Theta_{12}&\nu\rho \Theta_{13}\cr
\nu\rho \Theta_{21}&(1-\nu)T+\nu\rho \Theta_{22}&\nu\rho \Theta_{23}\cr
\nu\Theta_{31}&\nu\rho \Theta_{32}&(1-\nu) T+\nu\rho \Theta_{33}
\end{array}
\right)\cr
&=&(1-\nu) \rho T Id+\nu\rho \Theta\cr
&=&\frac{(1-\nu)}{3}\int_{\mathbb{R}^3}F(x,v,t)|v-U|^2dv  Id+\nu\int_{\mathbb{R}^3}F(x,v,t)(v-U)\otimes (v-U)dv.
\end{eqnarray*}
Then a direct computation using
\[
k^{\top}\{(v-U)\otimes(v-U)\}k=\{(v-U)\cdot k\}^2
\]
shows that for any $k$ in $\mathbb{R}^3$
\begin{eqnarray*}
k^{\top}\{\rho \mathcal{T}_{\nu}\}k=\frac{(1-\nu)}{3}\left\{\int_{\mathbb{R}^3} F(x,v,t)|v-U|^2dv\right\}|k|^2+\nu\int_{\mathbb{R}^3} F(x,v,t)\big\{(v-U)\cdot k\big\}^2 dv.
\end{eqnarray*}
We split the estimate into the following two cases.
When $0\leq \nu<1$, we have
\begin{eqnarray*}
k^{\top}\{\rho \mathcal{T}_{\nu}\}k\geq\frac{(1-\nu)}{3}|k|^2\int_{\mathbb{R}^3} F(x,v,t)|v-U|^2dv.
\end{eqnarray*}
In the case $-\frac{1}{2}\leq \nu<0$, we apply Cauchy-Schwartz inequality to the second term to get
\begin{eqnarray*}
k^{\top} \{\rho \mathcal{T}_{\nu}\}k&\geq&\frac{(1-\nu)}{3}\left\{\int_{\mathbb{R}^3} F(x,v,t)|v-U|^2dv\right\}|k|^2+\nu\int_{\mathbb{R}^3} F(x,v,t)|v-U|^2 |k|^2 dv\cr
&=&\frac{(1+2\nu)}{3}|k|^2\int_{\mathbb{R}^3} F(x,v,t)|v-U|^2dv.
\end{eqnarray*}
Therefore, we have
\begin{eqnarray}\label{TTT}
k^{\top} \{\rho \mathcal{T}_{\nu}\}k\geq\frac{1}{3}\min\{1-\nu,1+2\nu\}|k|^2\int_{\mathbb{R}^3} F(x,v,t)|v-U|^2dv,
\end{eqnarray}
or equivalently,
\begin{eqnarray}\label{TT}
k^{\top}\mathcal{T}_{\nu}k\geq\min\{1-\nu,1+2\nu\}|k|^2 T ,
\end{eqnarray}
We then apply Lemma \ref{ULestimatesofRhoUT} to compute 
\begin{eqnarray}\label{Tlower}
\begin{split}
T(x,t)&=\frac{1}{\rho}\int_{\mathbb{R}^3}F(x,v,t)|v-U|^2dv\cr
&=\frac{1}{\rho}\left\{\int_{\mathbb{R}^3} F(x,v,t)|v|^2dv-\rho |U|^2\right\}\cr
&=\frac{1}{1-\mathcal{E}(t)}\left\{\int_{\mathbb{R}^3}\left\{ \mu+\sqrt{\mu}f\right\}|v|^2v-C\mathcal{E}(t)\right\}\cr
&\geq\frac{1}{1-\mathcal{E}(t)}\left\{\int_{\mathbb{R}^3} \mu|v|^2dv-\|f\|_{L^{\infty}_{x,v}}\int_{\mathbb{R}^3}\sqrt{\mu}|v|^2dv-C\mathcal{E}(t)\right\}\cr
&\geq\frac{3-C\sqrt{\mathcal{E}(t)}}{1-\mathcal{E}(t)}\cr
&\geq3-C\sqrt{\mathcal{E}(t)}
\end{split}
\end{eqnarray}
for some generic constant $C$.
From (\ref{TT}) and (\ref{Tlower}), we conclude that for any fixed $-1/2<\nu<1$ and for sufficiently small $\mathcal{E}(t)$, $\mathcal{T}_{\nu}$ is
invertible and
\begin{eqnarray*}
\mathcal{T}^{-1}_{\nu}\leq \frac{1}{\min\{1-\nu,1+2\nu\}}T^{-1}Id.
\end{eqnarray*}
The proof for the upper bound is similar.
\end{proof}
%
%
%
%
\begin{lemma}\label{UniformEst}
Let $-1/2<\nu<1$. Suppose $\mathcal{E}(f(t))$ be sufficiently small. Then there exists a positive constant $C_{\nu}<\infty$ such that
\begin{eqnarray*}
~X^{\top}\{\mathcal{T}^{-1}_{\nu}\}Y\leq C_{\nu}\big\{\|X\|^2+\|Y\|^2\},
\end{eqnarray*}
for $X$, $Y$ in $\mathbb{R}^3$.
\end{lemma}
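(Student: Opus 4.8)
The plan is to reduce the whole statement to the upper bound on $\mathcal{T}^{-1}_{\nu}$ already established in Proposition \ref{Equivalence T}, together with a uniform lower bound on the temperature $T$. First I would recall from Proposition \ref{Equivalence T} that, once $\mathcal{E}(f(t))$ is sufficiently small, $\mathcal{T}_{\nu}$ is invertible and
\begin{eqnarray*}
0<\mathcal{T}^{-1}_{\nu}(x,t)\leq\frac{C^{-1}_{\nu1}}{T(x,t)}Id,\qquad C_{\nu1}=\min\{1-\nu,1+2\nu\}>0,
\end{eqnarray*}
so that $\mathcal{T}^{-1}_{\nu}(x,t)$ is a symmetric positive semidefinite matrix whose operator norm is at most $C^{-1}_{\nu1}/T(x,t)$.

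Next I would extract a uniform lower bound for $T$. This is essentially the computation (\ref{Tlower}) already carried out in the proof of Proposition \ref{Equivalence T}: using Lemma \ref{ULestimatesofRhoUT} and $\int_{\mathbb{R}^3}\mu|v|^2dv=3$, one has $T(x,t)\geq 3-C\sqrt{\mathcal{E}(t)}\geq 2$ for $\mathcal{E}(t)$ small enough. Combining the two bounds, the operator norm of $\mathcal{T}^{-1}_{\nu}(x,t)$ is bounded by $C^{-1}_{\nu1}/2=:M_{\nu}$, a finite constant depending only on $\nu$.

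Finally, for arbitrary $X,Y\in\mathbb{R}^3$, the Cauchy--Schwarz inequality for the Euclidean inner product together with this operator-norm bound and Young's inequality gives
\begin{eqnarray*}
X^{\top}\{\mathcal{T}^{-1}_{\nu}\}Y\leq \big\|\mathcal{T}^{-1}_{\nu}\big\|\,\|X\|\,\|Y\|\leq M_{\nu}\|X\|\,\|Y\|\leq \frac{M_{\nu}}{2}\big(\|X\|^2+\|Y\|^2\big),
\end{eqnarray*}
so the claim holds with $C_{\nu}=M_{\nu}/2=C^{-1}_{\nu1}/4$. There is no real obstacle in this argument; the only point worth emphasizing is that $C_{\nu}$ stays finite as $\nu\to 0$, which is guaranteed precisely because $C_{\nu1}=\min\{1-\nu,1+2\nu\}\to 1$ rather than $0$ on the interval $-1/2<\nu<1$ — the same non-degeneracy already used in Proposition \ref{Equivalence T} to rule out spurious $1/\nu$ singularities.
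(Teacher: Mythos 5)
Your proof is correct and follows essentially the same route as the paper: both arguments rest entirely on Proposition \ref{Equivalence T} (invertibility and the bound $\mathcal{T}^{-1}_{\nu}\leq C^{-1}_{\nu1}T^{-1}Id$) together with the lower bound $T\geq 3-C\sqrt{\mathcal{E}(t)}$. The only cosmetic difference is in the final elementary step — you pass through the operator norm via Cauchy--Schwarz and Young's inequality, whereas the paper uses the polarization identity $X^{\top}\mathcal{T}^{-1}_{\nu}Y=\tfrac{1}{2}\bigl[(X+Y)^{\top}\mathcal{T}^{-1}_{\nu}(X+Y)-X^{\top}\mathcal{T}^{-1}_{\nu}X-Y^{\top}\mathcal{T}^{-1}_{\nu}Y\bigr]$ and bounds each quadratic form directly; both yield the same conclusion.
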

\begin{proof}
By Proposition \ref{Equivalence T}, $\mathcal{T}_{\nu}$ is invertible under the assumption of the lemma.
Moreover, Since $\mathcal{T}_{\nu}$ is symmetric, $\mathcal{T}^{-1}_{\nu}$ also is symmetric.
Therefore, we can compute
\begin{eqnarray*}
\left|X^{\top}\mathcal{T}_{\nu}^{-1}Y\right|&=&\frac{1}{2}\left|(X+Y)^{\top}\mathcal{T}_{\nu}^{-1}(X+Y)-X^{\top}\mathcal{T}_{\nu}^{-1}X-Y^{\top}\mathcal{T}_{\nu}^{-1}Y\right|\cr
&\leq&\frac{1}{2}\left|(X+Y)^{\top}\mathcal{T}_{\nu}^{-1}(X+Y)\right|+\frac{1}{2}\left|X^{\top}\mathcal{T}_{\nu}^{-1}X\right|+\frac{1}{2}\left|Y^{\top}\mathcal{T}_{\nu}^{-1}Y\right|\cr
&\leq&\frac{C}{\min\left\{1-\nu,1+2\nu\right\}}\big\{\|X\|^2+\|Y\|^2\big\}.
\end{eqnarray*}
for any two vectors $X$ and $Y$ in $\mathbb{R}^3$.
\end{proof}
Similar result holds for $\mathcal{T}_{\theta}$:
\begin{lemma}\label{UniformEstTheta}
Let $-1/2<\nu<1$. Suppose $\mathcal{E}(f(t))$ is sufficiently small. Then $\mathcal{T}_{\theta}$ is invertible, and
there exists a positive constant $C_{\nu}<\infty$ such that
\begin{eqnarray*}
~X^{\top}\{\mathcal{T}^{-1}_{\theta}\}Y\leq C_{\nu}\big\{\|X\|^2+\|Y\|^2\}.
\end{eqnarray*}
\end{lemma}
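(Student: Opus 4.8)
The statement for $\mathcal{T}_\theta$ is the $\theta$-analogue of Lemma~\ref{UniformEst}, so the plan is to mimic that proof verbatim, substituting the $\theta$-versions of the macroscopic estimates. The only genuinely new input needed is a lower bound of the form $k^\top \mathcal{T}_\theta k \geq c_\nu |k|^2 T_\theta$ with $c_\nu = \min\{1-\nu, 1+2\nu\}$, together with a lower bound $T_\theta \geq c > 0$, which together give invertibility of $\mathcal{T}_\theta$ and the operator bound $\mathcal{T}_\theta^{-1} \leq \frac{C}{\min\{1-\nu,1+2\nu\}} Id$. Once that is in hand, the polarization identity
\[
X^\top \mathcal{T}_\theta^{-1} Y = \tfrac12\big( (X+Y)^\top \mathcal{T}_\theta^{-1}(X+Y) - X^\top \mathcal{T}_\theta^{-1} X - Y^\top \mathcal{T}_\theta^{-1} Y\big)
\]
and the symmetry of $\mathcal{T}_\theta^{-1}$ finish the argument exactly as in Lemma~\ref{UniformEst}.

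\textbf{Step 1 (lower bound for $k^\top\mathcal{T}_\theta k$).} I would first re-derive, for the transitional fields, the identity analogous to the one in the proof of Proposition~\ref{Equivalence T}, namely
\[
k^\top \mathcal{T}_\theta k = \tfrac{1-\nu}{3}\Big(\textstyle\sum_i \Theta_{\theta ii}\Big)|k|^2 + \nu \, k^\top \Theta_\theta k,
\]
where $\Theta_\theta$ is the transitional stress tensor encoded in $G_\theta$. By the same case split on the sign of $\nu$ — for $0\le\nu<1$ discard the nonnegative second term; for $-1/2\le\nu<0$ use Cauchy--Schwarz on $k^\top\Theta_\theta k$ against $(\sum_i\Theta_{\theta ii})|k|^2$ — one obtains
\[
k^\top \mathcal{T}_\theta k \geq \tfrac13\min\{1-\nu, 1+2\nu\}\,|k|^2 \sum_i \Theta_{\theta ii} = \min\{1-\nu,1+2\nu\}\,|k|^2\, T_\theta.
\]
The one subtlety here is that $\Theta_\theta$ is defined only through $G_\theta = \theta G$, not literally as a second moment of a nonnegative density, so one must check that the Cauchy--Schwarz step still applies; this is where the explicit relation (e.g.\ equation~(\ref{putitback3})) between $\mathcal{T}_{\theta ii}$ and $\mathcal{T}_{ii}$ from Lemma~\ref{MacroscopicField_{theta}} is useful, since it lets one bound $k^\top\mathcal{T}_\theta k$ below in terms of $k^\top\mathcal{T}_\nu k$ up to $o(\mathcal{E}(t))$ corrections, and $\mathcal{T}_\nu$ does satisfy the clean estimate~(\ref{TT}).

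\textbf{Step 2 (lower bound for $T_\theta$ and invertibility).} From Lemma~\ref{MacroscopicField_{theta}}(3) we have $\mathcal{T}_{\theta ii} = 1 + o(\mathcal{E}(t))$ and $\mathcal{T}_{\theta ij} = o(\mathcal{E}(t))$, so $T_\theta = \tfrac13\operatorname{tr}\mathcal{T}_\theta = 1 + o(\mathcal{E}(t)) \geq \tfrac12$ for $\mathcal{E}(t)$ small; alternatively one can quote Lemma~\ref{Det_Estimate}(2) to get $\det\mathcal{T}_\theta \geq \tfrac12$ directly. Combining Steps~1 and~2, $\mathcal{T}_\theta$ is positive definite with smallest eigenvalue bounded below by $\tfrac12\min\{1-\nu,1+2\nu\} > 0$ uniformly in $\theta$, hence invertible with $\mathcal{T}_\theta^{-1} \leq \big(\tfrac12\min\{1-\nu,1+2\nu\}\big)^{-1} Id$.

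\textbf{Step 3 (polarization).} With $C_\nu := 2/\min\{1-\nu,1+2\nu\} < \infty$ for $-1/2<\nu<1$, apply the polarization identity and the triangle inequality exactly as in Lemma~\ref{UniformEst}:
\[
\big| X^\top \mathcal{T}_\theta^{-1} Y \big| \leq \tfrac12\big(C_\nu \|X+Y\|^2 + C_\nu\|X\|^2 + C_\nu\|Y\|^2\big) \leq C_\nu\big(\|X\|^2 + \|Y\|^2\big),
\]
absorbing the factor $\tfrac12\|X+Y\|^2 \leq \|X\|^2 + \|Y\|^2$ into the constant. This yields the claimed bound with $C_\nu$ depending only on $\nu$ and finite throughout $(-1/2,1)$.

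\textbf{Main obstacle.} The only real point requiring care is Step~1: the transitional tensor $\mathcal{T}_\theta$ is built from the linearly interpolated quantity $G_\theta=\theta G$ rather than from a bona fide moment of a positive distribution, so the Cauchy--Schwarz argument that makes the case $\nu<0$ work in Proposition~\ref{Equivalence T} is not immediately available. The cleanest fix is to avoid arguing about $\mathcal{T}_\theta$ intrinsically and instead express it, via the identity~(\ref{putitback3}) (and~(\ref{46}) for the off-diagonal entries) from the proof of Lemma~\ref{MacroscopicField_{theta}}, as $\tfrac{\rho}{\rho_\theta}\mathcal{T}_\nu$ plus terms that are $o(\mathcal{E}(t))$; then the positivity of $\mathcal{T}_\theta$ follows from that of $\mathcal{T}_\nu$ established in Proposition~\ref{Equivalence T}, up to an arbitrarily small perturbation, and everything else is routine.
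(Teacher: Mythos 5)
Your proposal is correct and follows essentially the same route as the paper: the ``cleanest fix'' you describe in the final paragraph --- decomposing $\rho_\theta\mathcal{T}_\theta$ via (\ref{putitback3}) and (\ref{46}) into a $\theta$-weighted multiple of $\rho\mathcal{T}_\nu$ plus an identity-matrix piece of size $\rho_\theta-\theta\rho=1-\theta$ plus $O(\mathcal{E}(t))$ corrections, invoking Proposition \ref{Equivalence T} for the positive lower bound $\tfrac13\min\{1-\nu,1+2\nu\}|k|^2$ uniform in $\theta$, and then polarizing as in Lemma \ref{UniformEst} --- is precisely the paper's argument. You also correctly identified that the naive intrinsic Cauchy--Schwarz argument on $\Theta_\theta$ is the one step that does not transfer directly, which is exactly why the paper takes the detour through $\mathcal{T}_\nu$.
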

\begin{proof}
In view of (\ref{putitback3}) and (\ref{46}), we can write $\rho\mathcal{T}_{\nu}$ as
\begin{eqnarray*}
\rho_{\theta}\mathcal{T}_{\theta}&=&\theta\rho\mathcal{T}+\left\{\frac{1-\nu}{3}(\rho_{\theta}|U_{\theta}|^2-\theta\rho|U|^2)+\rho_{\theta}-\theta\rho\right\}Id\cr
&+&\nu\theta(\rho\ U\otimes U-\rho D)-\nu(\rho_{\theta}U_{\theta}\otimes U_{\theta}-\rho_{\theta}D_{\theta}),
\end{eqnarray*}
so that
\begin{eqnarray*}
k^{\top}\left\{\rho_{\theta}\mathcal{T}_{\theta}\right\}k&=&\theta k\left\{\rho\mathcal{T}\right\}k
+(\rho_{\theta}-\theta\rho)|k|^2
+\frac{1-\nu}{3}\left(\rho_{\theta}|U_{\theta}|^2-\theta\rho|U|^2\right)|k|^2\cr
&+&\nu\theta\left\{\rho(U\cdot k)^2-\rho k^{\top}Dk\right\}-\nu\left\{\rho_{\theta}(U_{\theta}\cdot k)^2-\rho_{\theta}k^{\top}D_{\theta}k\right\},
\end{eqnarray*}
for $k\in \mathbb{R}^3$.
$D$ and $D_{\theta}$ denote the diagonal matrix with diagonal elements $U^2_1,U^2_2,U^2_3$ and $U^2_{\theta1},U^2_{\theta2},U^2_{\theta3}$
respectively:
\begin{eqnarray*}
D=\left(\begin{array}{ccc}U^2_{1}&0&0\cr0&U^2_{2}&0\cr0&0&U^2_{3}\end{array}\right),\quad
D_{\theta}=\left(\begin{array}{ccc}U^2_{1\theta}&0&0\cr0&U^2_{2\theta}&0\cr0&0&U^2_{3\theta}\end{array}\right).
\end{eqnarray*}
Then, employing Lemma 4.1 and 4.3, we obtain
\begin{eqnarray*}
k^{\top}\{\rho_{\theta}\mathcal{T}_{\theta}\}k&\geq& \frac{1}{2}\min\{1+2\nu,1-\nu\}|k|^2+(1-\theta)|k|^2+O(\mathcal{E}(t))|k|^2\cr
&\geq& \frac{1}{3}\min\{1+2\nu,1-\nu\}|k|^2,\quad k\in \mathbb{R}^3,
\end{eqnarray*}
for sufficiently small $\mathcal{E}(t)$.
By virtue of Lemma \ref{MacroscopicField_{theta}} (1)
\begin{eqnarray*}
k^{\top}\{\mathcal{T}_{\theta}\}k\geq \frac{1}{4}\min\{1+2\nu,1-\nu\}|k|^2,
\end{eqnarray*}
The rest of the proof is similar to the proof of Lemma \ref{UniformEst}.
\end{proof}
%
%
%
%
\begin{lemma}\label{UniformEst2}
Let $-1/2<\nu<1$. Suppose $\mathcal{E}(f(t))$ is sufficiently small. Then there exists a positive constant $C_{\nu,\alpha}<\infty$ such that
\begin{eqnarray*}
(1)~X^{\top}\{\partial^{\alpha}\mathcal{T}^{-1}_{\nu}\}Y\leq C_{\nu,\alpha}\big\{\|X\|^2+\|Y\|^2\},\cr
(2)~X^{\top}\{\partial^{\alpha}\mathcal{T}^{-1}_{\nu\theta}\}Y\leq C_{\nu,\alpha}\big\{\|X\|^2+\|Y\|^2\},
\end{eqnarray*}
for $X$, $Y$ in $\mathbb{R}^3$.
\end{lemma}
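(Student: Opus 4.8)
The plan is to reduce both estimates to an operator-norm bound $\|\partial^{\alpha}\mathcal{T}^{-1}_{\nu}\|\le C_{\nu,\alpha}$ (and likewise for $\mathcal{T}^{-1}_{\nu\theta}$), after which the claimed bilinear estimate is immediate from Cauchy--Schwarz and Young's inequality: $X^{\top}(\partial^{\alpha}\mathcal{T}^{-1}_{\nu})Y\le \|\partial^{\alpha}\mathcal{T}^{-1}_{\nu}\|\,\|X\|\,\|Y\|\le \frac{1}{2}C_{\nu,\alpha}(\|X\|^2+\|Y\|^2)$. When $|\alpha|=0$ this operator-norm bound is exactly Lemma \ref{UniformEst} (resp.\ Lemma \ref{UniformEstTheta}), so it suffices to treat $|\alpha|\ge 1$.

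To obtain the operator-norm bound for $|\alpha|\ge 1$, I would differentiate the identity $\mathcal{T}_{\nu}\mathcal{T}^{-1}_{\nu}=Id$. A single derivative gives $\partial_i\mathcal{T}^{-1}_{\nu}=-\mathcal{T}^{-1}_{\nu}(\partial_i\mathcal{T}_{\nu})\mathcal{T}^{-1}_{\nu}$, and iterating this together with the product rule shows that $\partial^{\alpha}\mathcal{T}^{-1}_{\nu}$ is a finite sum, with integer coefficients depending only on $\alpha$, of terms of the form
\[
\pm\,\mathcal{T}^{-1}_{\nu}\,(\partial^{\beta_1}\mathcal{T}_{\nu})\,\mathcal{T}^{-1}_{\nu}\,(\partial^{\beta_2}\mathcal{T}_{\nu})\cdots(\partial^{\beta_k}\mathcal{T}_{\nu})\,\mathcal{T}^{-1}_{\nu},
\]
where $k\ge 1$, $\beta_1+\cdots+\beta_k=\alpha$, and each $|\beta_j|\ge 1$.

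Next I would bound each factor in the operator norm on $\mathbb{R}^3$. For the $\mathcal{T}^{-1}_{\nu}$ factors I invoke Proposition \ref{Equivalence T} together with the lower bound $T(x,t)\ge 3-C\sqrt{\mathcal{E}(t)}\ge 2$ coming from (\ref{Tlower}) (valid once $\mathcal{E}(t)$ is small): since $\mathcal{T}^{-1}_{\nu}$ is symmetric positive definite with $\mathcal{T}^{-1}_{\nu}\le (C^{-1}_{\nu1}/T)\,Id$, we get $\|\mathcal{T}^{-1}_{\nu}\|\le C^{-1}_{\nu1}/T\le \frac{1}{2}C^{-1}_{\nu1}$, and this stays bounded as $\nu\to 0$ because $C_{\nu1}=\min\{1-\nu,1+2\nu\}\to 1$. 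For each $\partial^{\beta_j}\mathcal{T}_{\nu}$ with $|\beta_j|\ge 1$, Lemma \ref{ULestimatesofRhoUT1}(3) controls every entry by a quantity that vanishes with $\mathcal{E}(t)$, so (up to the dimensional constant) $\|\partial^{\beta_j}\mathcal{T}_{\nu}\|\le C_{|\beta_j|,\nu}\sqrt{\mathcal{E}(t)}$. Multiplying, each term in the expansion has operator norm at most $C_{\nu,\alpha}\big(\sqrt{\mathcal{E}(t)}\big)^{k}\le C_{\nu,\alpha}$, and summing the finitely many terms yields $\|\partial^{\alpha}\mathcal{T}^{-1}_{\nu}\|\le C_{\nu,\alpha}$, proving (1).

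Part (2) is handled identically with $\mathcal{T}_{\nu\theta}$ in place of $\mathcal{T}_{\nu}$: the operator-norm bound on $\mathcal{T}^{-1}_{\nu\theta}$, uniform in $\theta\in[0,1]$, is supplied by Lemma \ref{UniformEstTheta}, while the entrywise bounds on $\partial^{\beta_j}\mathcal{T}_{\nu\theta}$ come from Lemma \ref{Derivatives}(3), again uniformly in $\theta$, so the constant $C_{\nu,\alpha}$ is $\theta$-independent. The only ingredient that is not pure bookkeeping is the uniform control of the $\mathcal{T}^{-1}_{\nu}$ (resp.\ $\mathcal{T}^{-1}_{\nu\theta}$) factors — precisely what Proposition \ref{Equivalence T} and Lemma \ref{UniformEstTheta} were set up to provide — so I do not anticipate a genuine obstacle; the expansion of the inverse is the only mildly tedious step.
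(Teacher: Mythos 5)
Your proof is correct and follows essentially the same route as the paper: differentiate $\mathcal{T}_{\nu}\mathcal{T}^{-1}_{\nu}=Id$ to express $\partial^{\alpha}\mathcal{T}^{-1}_{\nu}$ in terms of $\mathcal{T}^{-1}_{\nu}$ and derivatives of $\mathcal{T}_{\nu}$, then bound the inverse factors by Proposition \ref{Equivalence T} (resp.\ Lemma \ref{UniformEstTheta}) and the derivative factors by the macroscopic estimates, the paper organizing this as an induction on $|\alpha|$ where you write out the explicit product expansion. The two are the same argument up to bookkeeping.
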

\begin{proof}
We have proved in Lemma \ref{UniformEst} that $\mathcal{T}_{\nu}$ is strictly positive definite for $-1/2<\nu<1$ when $\mathcal{E}(t)$ is
sufficiently small. Therefore,
$\mathcal{T}_{\nu}$ is invertible. Now, applying $\partial$ to $\mathcal{T}_{\nu}\mathcal{T}^{-1}_{\nu}=I$, we see that
$\partial\mathcal{T}_{\nu}\left\{\mathcal{T}^{-1}_{\nu}\right\}+\mathcal{T}_{\nu}\partial\left\{\mathcal{T}^{-1}_{\nu}\right\}=0$, and thus,
\begin{eqnarray}\label{dugod}
\partial\{\mathcal{T}^{-1}_{\nu}\}=\mathcal{T}^{-1}_{\nu}\left\{\partial\mathcal{T}_{\nu}\right\}\mathcal{T}^{-1}_{\nu}.
\end{eqnarray}
Then the case $|\alpha|=1$ follows directly from this identity and Lemma \ref{UniformEst} and Lemma \ref{Derivatives}.
For general case, we recall
\begin{eqnarray}\label{dugod2}
\partial^{\alpha}\mathcal{T}_{\nu}=\sum_{|\beta|+|\gamma|=|\alpha|} \partial^{\beta}\mathcal{T}_{\nu}\partial^{\gamma}\left\{\mathcal{T}^{-1}_{\nu}\right\}
\end{eqnarray}
and use the induction argument. The proof for $\mathcal{T}_{\theta}$ is almost identical. We omit it.
\end{proof}
\subsection{Local existence}
We first estimate the nonlinear term $\Gamma(f)$.
Note that, in contrast to the Boltzmann equation, we need to use the estimates on the macroscopic fields established in the previous section
to control $\Gamma(f)$ in the energy norm.
%
%
%
%
%
%
%
%
\begin{lemma}\label{GammaEstimate} The bilinear perturbation $\Gamma$ satisfies the following estimates:
\begin{eqnarray*}
&&(1)~ \left|~\int \partial^{\alpha}_{\beta}\Gamma(f)gdv\right|\leq C\sum_{\substack{|\alpha_1|+|\alpha_2|\\\leq|\alpha|}}
\|\partial^{\alpha_1} f\|_{L^2_{x,v}}\|\partial^{\alpha_2} f\|_{L^2_v}\|h\|_{L^2_v}\cr
&&\hspace{3.4cm}+C\!\!\!\!\!\sum_{\substack{|\alpha_1|+|\alpha_2|\leq|\alpha|\\|\beta_2|\leq |\beta|}}
\|\partial^{\alpha_1}f\|_{L^2_{x,v}}\|\partial^{\alpha_2}_{\beta_2}f\|_{L^2_{v}}\|\partial^{\alpha_3}f\|_{L^2_v}\|h\|_{L^2_v},\cr
&&\hspace{3.4cm}+C\!\!\!\!\!\sum_{\substack{|\alpha_1|+|\alpha_2|+|\alpha_3|\\\leq|\alpha|}}\|\partial^{\alpha_1}f\|_{L^2_{x,v}}\|\partial^{\alpha_2}f\|_{L^2_{v}}
\|\partial^{\alpha_3}f\|_{L^2_v}\|h\|_{L^2_v},\cr
&&(2)~ \Big|\int\Gamma_{1,2}(f,g)fdv\Big|+\Big|\int\Gamma_{1,2}(g,f)fdv\Big|
\leq C\sup_x\|g\|_{L^2_{x,v}}\|f\|^2_{L^2_{x,v}}.\cr
&& \hspace{0.65cm}\Big|\int\Gamma_{3}(f,g,h)fdv\Big|+\Big|\int\Gamma_3(g,f,h)fdv\Big|+\Big|\int\Gamma_3(g,h,f)fdv\Big|\cr
&&\hspace{1.5cm}\leq C\sup_x\|g\|_{L^2_{v}}\sup_x\|h\|_{L^2_v}\|f\|^2_{L^2_{x,v}}.\cr
&&(3)~ \left\|\Gamma_{1,2}(f,g)h+\Gamma_{1,2}(g,h)h\right\|_{L^2_{x,v}}\leq C\sup_{x,v}|h|\sup_x\|f\|_{L^2_v}\|g\|_{L^2_{x,v}},\cr
&&\hspace{0.7cm} \left\|\Gamma_3(f,g,h)r+\Gamma_3(g,f,h)r+\Gamma_3(g,h,f)r\right\|_{L^2_{x,v}}\cr
&&\hspace{1.43cm}\leq C\sup_{x,v}|h|\sup_x\|f\|_{L^2_v}\sup_x\|g\|_{L^2_v}\|h\|_{L^2_{x,v}}.
\end{eqnarray*}
\end{lemma}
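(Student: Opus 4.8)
The plan is to exploit the explicit structure of $\Gamma(f)=\Gamma_1(f,f)+\Gamma_2(f,f)+\Gamma_3(f,f,f)$ from Proposition \ref{Mf-f}, where each summand is a product of macroscopic moments $\langle f,e_i\rangle_{L^2_v}$ (which depend only on $x$ and $t$) against velocity functions $\mathcal{Q}^{\mathcal{M}_\nu}_{ij}$ or $\mathcal{Q}^{A_\nu}_i$ whose $v$-dependence is controlled by the structural hypotheses $(\mathcal{H}_{\mathcal{M}_\nu})$ and $(\mathcal{H}_{A_\nu})$. The first step is to record pointwise-in-$(x,v)$ bounds on $\mathcal{Q}^{\mathcal{M}_\nu}_{ij}$ and its $\partial^\alpha_\beta$ derivatives: differentiating under the $\theta$-integral, each $v$-derivative brings down a factor of $\mathcal{T}^{-1}_{\nu\theta}(v-U_\theta)$ and each $(x,t)$-derivative produces, via the chain rule, factors $\partial^\gamma\rho_\theta$, $\partial^\gamma U_\theta$, $\partial^\gamma\mathcal{T}_{\nu\theta}$, together with $\partial^\gamma\{\mathcal{T}^{-1}_{\nu\theta}\}$ handled through the identity \eqref{dugod}–\eqref{dugod2}. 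Here Lemmas \ref{MacroscopicField_{theta}}, \ref{Derivatives}, \ref{Det_Estimate}, \ref{UniformEstTheta} and \ref{UniformEst2} give that all these quantities are bounded (the ones without a $\rho_\theta$-numerator are in fact $O(\sqrt{\mathcal E(t)})$), and crucially the bounds are uniform in $\nu\in(-1/2,1)$ and in $\theta\in[0,1]$. Then $(\mathcal{H}_{\mathcal{M}_\nu}1)$ — $P^{\mathcal{M}_\nu}_{ij}(0,\dots,0)=0$ — lets us extract a factor that is $O(\sqrt{\mathcal E(t)})$, while the Gaussian $\exp(-\tfrac12(v-U_\theta)^\top\mathcal{T}^{-1}_{\nu\theta}(v-U_\theta))$, divided by $\sqrt\mu$, is dominated by $e^{c|v|^2}$ times a constant for some small $c$ (using $\mathcal{T}^{-1}_{\nu\theta}\geq \tfrac14 C_{\nu1} Id$ from Lemma \ref{UniformEstTheta} when $\mathcal E$ is small). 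Hence every $\partial^\alpha_\beta\mathcal{Q}^{\mathcal{M}_\nu}_{ij}$ is bounded in $L^\infty_{x,v}$, and more precisely in any weighted space $e^{-c|v|^2}L^\infty$, with a $\nu$-uniform constant.

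For part (1), I would apply the Leibniz rule to $\partial^\alpha_\beta$ acting on the products in $\Gamma$: the derivatives distribute onto the scalar moments $\partial^{\alpha'}\langle f,e_i\rangle = \langle\partial^{\alpha'}f,e_i\rangle$ (no $\beta$-derivative acts on these) and onto the $\mathcal{Q}$-factors. Estimating a moment $\langle \partial^{\alpha'}f,e_i\rangle$ in $L^2_x$ by $\|\partial^{\alpha'}f\|_{L^2_{x,v}}$ (Hölder in $v$ against the Gaussian weight $e_i\sqrt\mu$), pairing the $v$-integral of $(\partial^{\alpha''}_\beta \mathcal{Q})(\partial^{\alpha'''}f)$ against $h$ via Cauchy–Schwarz in $v$, and putting the remaining factor in $L^2_v$, yields exactly the three sums on the right-hand side of (1): the $\Gamma_1,\Gamma_2$ terms give the first sum (two factors of $f$, one in $L^2_{x,v}$ and one in $L^2_v$), the mixed $\Gamma_1$-with-$\beta$-derivative term gives the second, and $\Gamma_3$ gives the cubic third sum. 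Since $N\geq 4$ one may freely choose which factors to place in $L^\infty_x$ via Sobolev embedding $H^2(\mathbb T^3)\hookrightarrow L^\infty$, but for (1) the stated form only needs the $L^2$-based splitting.

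Parts (2) and (3) are variants obtained the same way but organized to isolate one distinguished factor. For (2), writing $\Gamma_{1,2}(f,g)$ schematically as $\mathcal{Q}\,\langle f,e_i\rangle\langle g,e_j\rangle$ with one of the arguments playing the role of $f$: integrate against $f$ in $v$, pull $\sup_x$ onto the moment of $g$ (bounding $\langle g,e_j\rangle(x)$ by $\sup_x\|g\|_{L^2_v}\leq C\sup_x\|g\|_{L^2_{x,v}}$, though note the sup is really over $x$ of the $L^2_v$-norm), and Cauchy–Schwarz the remaining $\langle f,e_i\rangle\int \mathcal{Q}f\,dv$ against the spatial $L^2$ to produce $\|f\|_{L^2_{x,v}}^2$; the $\Gamma_3$ line is identical with an extra $\sup_x\|h\|_{L^2_v}$ factor. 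For (3) the target norm is $L^2_{x,v}$ of the product itself: bound $\mathcal{Q}^{\mathcal M}_{ij}h$ pointwise by $\sup_{x,v}|h|$ times a Gaussian, take $\sup_x$ on $\|f\|_{L^2_v}$, and integrate the square of $\langle g,e_j\rangle$ times the Gaussian in $(x,v)$ to get $\|g\|_{L^2_{x,v}}$ — again with $\nu$-uniform constants, and the cubic case carries one more $\sup_x\|\cdot\|_{L^2_v}$. The main obstacle is purely bookkeeping: verifying that the chain-rule expansion of $\partial^\alpha_\beta\mathcal{Q}^{\mathcal M_\nu}_{ij}$ never generates an uncontrolled power of $1/\nu$ or $1/\det\mathcal{T}_{\nu\theta}$ or a non-integrable power of $v$ — this is precisely where Proposition \ref{Equivalence T} and the $\theta$-uniform macroscopic estimates of Section 3 do the essential work, and where the hypotheses $(\mathcal H_{\mathcal M_\nu})$, $(\mathcal H_{A_\nu})$ (polynomial numerators vanishing at the origin, monomial denominators in $\rho_\theta$ and $\det\mathcal T_{\nu\theta}$ only) must be used carefully. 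Everything else is a routine Hölder/Cauchy–Schwarz distribution of norms that I would not spell out in full.
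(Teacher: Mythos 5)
Your proposal follows essentially the same route as the paper: everything reduces to the pointwise Gaussian bound $\frac{1}{\sqrt{\mu}}\big|\partial^{\alpha}_{\beta}\mathcal{Q}^{\mathcal{M}}\big|\leq C_{\alpha,\beta}e^{-a|v|^2}$ (the paper's (\ref{Q_Estimate})), obtained from the chain rule together with the macroscopic estimates of Section 3, after which parts (1)--(3) are H\"{o}lder and Cauchy--Schwarz; the only cosmetic difference is that the paper proves (3) by duality against a test function $\Phi\in L^2_{x,v}$ rather than by your direct pointwise estimate.

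One justification at the crux is wrong as stated and must be repaired. To dominate $\exp\big(-\frac{1}{2}(v-U_{\theta})^{\top}\mathcal{T}^{-1}_{\nu\theta}(v-U_{\theta})\big)/\sqrt{\mu}$ by $Ce^{-a|v|^2}$ with $a>0$, you need the eigenvalues of $\mathcal{T}^{-1}_{\nu\theta}$ to exceed roughly $\frac{1}{2}$, i.e.\ you need the upper bound $\mathcal{T}_{\nu\theta}\leq(1+C\sqrt{\mathcal{E}(t)})Id$. The inequality you invoke at this step, $\mathcal{T}_{\theta}\geq\frac{1}{4}\min\{1-\nu,1+2\nu\}Id$ from Lemma \ref{UniformEstTheta}, points the wrong way: it yields only an upper bound on $\mathcal{T}^{-1}_{\nu\theta}$, and even granting the lower bound $\mathcal{T}^{-1}_{\nu\theta}\geq\frac{1}{4}C_{\nu1}Id$ as you wrote it, the resulting exponent $-\frac{1}{8}C_{\nu1}|v-U_{\theta}|^2+\frac{1}{4}|v|^2$ is positive for every $\nu\in(-1/2,1)$ since $C_{\nu1}\leq 1$ --- which is consistent with your own conclusion that the quotient is ``dominated by $e^{c|v|^2}$,'' a bound that is useless for the subsequent $v$-integrations in parts (1)--(3). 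The correct ingredient, which you list among your tools but do not use at this step, is Lemma \ref{MacroscopicField_{theta}}: the entries of $\mathcal{T}_{\nu\theta}$ are within $C\sqrt{\mathcal{E}(t)}$ of $\delta_{ij}$ and $|U_{\theta}|=O(\sqrt{\mathcal{E}(t)})$, so $\mathcal{T}^{-1}_{\nu\theta}\geq(1-C\sqrt{\mathcal{E}(t)})Id$ and the exponent becomes $-(1-o(1))\frac{|v|^2}{2}+\frac{|v|^2}{4}\leq-a|v|^2+C$ for small $\mathcal{E}(t)$. This is precisely how the paper arrives at (\ref{Q_Estimate}); with that substitution the remainder of your argument goes through.
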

\begin{proof}
Recall that the $\Gamma$ consists of $\Gamma_1$, $\Gamma_2$ and $\Gamma_3$. We prove this lemma only for $\Gamma_2$, because the proof
for the remaining parts are similar. Utilizing macroscopic estimates established in the previous section, we find
that there exists a polynomial $P_{\alpha,\beta}$, which is generically defined, such that
\begin{eqnarray*}
&&\left|\partial^{\alpha}_{\beta}\mathcal{M}_{\nu}\left(\rho_{\theta},U_{\theta},\mathcal{T}_{\nu\theta}\right)\right|\cr
&&\qquad=C_{\alpha,\beta}\left|P_{\alpha,\beta}(\nu,\partial\rho_{\theta},\partial U_{\theta},\partial\mathcal{T}_{\nu\theta})\right|\exp\left(-\frac{1}{2}(v-U_{\theta})^{\top}\mathcal{T}^{-1}_{\nu\theta}(v-U_{\theta})\right)\cr
&&\qquad\leq C_{\alpha,\beta}P(v)\exp\left(-\big\{1-o\big(\mathcal{E}\big(f(t)\big)\big)\big\}\frac{|v|^2}{2}+o\big(\mathcal{E}\big(f(t)\big)\big) \right)\cr
&&\qquad\leq C_{\alpha,\beta,\varepsilon}\exp\left(-\big\{1-o(\mathcal{E}(t))\big\}\left(\frac{1}{2}-\varepsilon\right)|v|^2+o\big(\mathcal{E}\big(f(t)\big)\big)\right),
\end{eqnarray*}
where $\partial$ denotes any of $\partial^{m}_{n}$ such that $m\leq|\alpha|$ and $n\leq|\beta|$.
Therefore, there exists a positive number $a$ depending on $\alpha$, $\beta$ and $\nu$ such that
\begin{eqnarray}\label{Q_Estimate}
\frac{1}{\sqrt{\mu}}\left|\partial^{\alpha}_{\beta}Q^{\mathcal{M}}\right|\leq C_{\alpha,\beta}\exp\left(-a|v|^2\right)
\end{eqnarray}
(1) By (\ref{Q_Estimate}) and H\"{o}lder inequality, we see
\begin{eqnarray*}
\int_{\mathbb{R}^3}|\partial^{\alpha}_{\beta}\Gamma_2(f)g|dv&\leq& \sum_{\substack{|\alpha_1|+|\alpha_2|+|\alpha_3|\cr=|\alpha|}}
\int_{\mathbb{R}^3}\left|\partial^{\alpha}_{\beta}Q^{\mathcal{M}}
\langle \partial^{\alpha_1}f,e_i\rangle\langle \partial^{\alpha_2}f,e_j\rangle gdv\right|\cr
&\leq&C\sum_{\substack{|\alpha_1|+|\alpha_2|+|\alpha_3|\cr=|\alpha_3|}}
\int_{\mathbb{R}^3} \exp\left(-a|v|^2\right)
\langle \partial^{\alpha_1}f,e_i\rangle\langle \partial^{\alpha_2}f,e_j\rangle gdv\cr
&\leq&C\sum_{\substack{|\alpha_1|+|\alpha_2|+|\alpha_3|\cr=|\alpha|}}
\left(\int_{\mathbb{R}^3}\exp\left(-|v|^2\right)gdv\right)
\|\partial^{\alpha_1}f\|_{L^2_v}\|\partial^{\alpha_2}f\|_{L^2_v}\cr
&\leq&C\sum_{\substack{|\alpha_1|+|\alpha_2|+|\alpha_3|\cr=|\alpha|}}
\left\|\exp\left(-a|v|^2\right)\right\|_{L^2v}\|g\|_{L^2v}
\|\partial^{\alpha_1}f\|_{L^2_v}\|\partial^{\alpha_2}f\|_{L^2_v}\cr
&\leq&C\sum_{\substack{|\alpha_1|+|\alpha_2|+|\alpha_3|\cr=|\alpha|}}
\|\partial^{\alpha_1}f\|_{L^2_v}\|\partial^{\alpha_2}f\|_{L^2v}\|g\|_{L^2_v}.
\end{eqnarray*}
(2) can be estimated similarly as
\begin{eqnarray*}
\int \Gamma_2(f,g)fdxdv&\leq&C\int_{\mathbb{R}^3}\|f\|_{L^2_v}\|g\|_{L^2_v}
\left(\int_{\mathbb{R}^3}\exp\left(-a|v|^2\right)fdv\right)dx\cr
&\leq&C\int_{\mathbb{R}^3}\|f\|_{L^2_v}\|g\|_{L^2_v}\|f\|_{L^2_v}dx\cr
&\leq&C\sup_x\|g\|_{L^2_v}\|f\|^2_{L^2_{x,v}}.
\end{eqnarray*}
(3) For $\Phi\in L^2_{x,v}$, we have
\begin{eqnarray*}
\langle\Gamma_2(f,g)r,\Phi\rangle&\leq&C\int_{\mathbb{R}^3}\|f\|_{L^2_v}\|g\|_{L^2_v}\|r\Phi\|_{L^2_v}dx\cr
&\leq&C\sup_{x,v}|r|\int_{\mathbb{R}^3}\|f\|_{L^2_v}\|g\|_{L^2_v}\|\Phi\|_{L^2_v}dx\cr
&\leq&C\sup_{x,v}|r|\left(\int_{\mathbb{R}^3}\|f\|^2_{L^2_v}\|g\|^2_{L^2_v}dx\right)^{\frac{1}{2}}\|\Phi\|_{L^2_{x,v}}\cr
&\leq&C\sup_{x,v}|r|\sup_x\|f\|_{L^2_v}\|g\|_{L^2_{x,v}}\|\Phi\|_{L^2_{x,v}}.
\end{eqnarray*}
Therefore, the duality argument gives
\begin{eqnarray*}
\|\Gamma_2(f,g)r\|_{L^2_{x,v}}\leq C\sup_{x,v}|r|\sup_x\|f\|_{L^2_v}\|g\|_{L^2_{x,v}}.
\end{eqnarray*}
\end{proof}
From the estimates in Lemma \ref{GammaEstimate}, the following local existence theorem can be proved by standard arguments (See, e.g \cite{Guo-VMB,Yun}).
\begin{theorem}\label{localExistence}
Let $\nu$ be a fixed constant such that $-1/2\leq \nu<1$. Let $F_0=g_0+\sqrt{\mu}f_0\geq 0$ and $f_0$ satisfies the conservation laws (\ref{ConservationLawsf}).
Then there exists $M_0>0$, $T_*>0$, such that if $ T_*\leq\frac{M_0}{2}$ and $\mathcal{E}(f_0)<\frac{M_0}{2}$, there is a unique solution $f(x,v,t)$ to the ES-BGK model
(\ref{LBGK}) such that
\begin{enumerate}
\item The high order energy $\mathcal{E}\big(f(t)\big)$ is continuous in $[0,T*)$ and uniformly bounded:
\begin{eqnarray*}
\sup_{0\leq t\leq T_*}\mathcal{E}\big(f(t)\big)\leq  M_0.
\end{eqnarray*}
\item The distribution function remains positive in $[0,T_*)$:
\begin{eqnarray*}
F(x,v,t)=\mu+\sqrt{\mu}f(x,v,t)\geq 0.
\end{eqnarray*}
\item The conservation laws (\ref{ConservationLawsf}) hold for all $[0,T_*]$.
\end{enumerate}
\end{theorem}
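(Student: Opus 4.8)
The plan is to run a Picard iteration adapted to the relaxation structure; the genuinely new point, compared with the classical BGK case, is keeping each iterate well defined, i.e.\ keeping $\mathcal{T}_{\nu}$ positive definite uniformly in $\nu$, while the rest follows standard lines (cf.\ \cite{Guo-VMB,Yun}). I would set $F^0\equiv\mu$ and, given $F^n=\mu+\sqrt{\mu}f^n$ with $\sup_{[0,T_*]}\mathcal{E}(f^n)\leq M_0$, define $F^{n+1}$ as the solution of the linear transport problem
\begin{equation*}
\partial_tF^{n+1}+v\cdot\nabla_xF^{n+1}+A_{\nu}[F^n]\,F^{n+1}=A_{\nu}[F^n]\,\mathcal{M}_{\nu}[F^n],\qquad F^{n+1}|_{t=0}=F_0,
\end{equation*}
which is solved explicitly along characteristics once one checks, via Lemma \ref{ULestimatesofRhoUT} and Proposition \ref{Equivalence T}, that smallness of $\mathcal{E}(f^n)$ forces $\rho^n>0$, $T^n>0$ and $\mathcal{T}_{\nu}[F^n]$ strictly positive definite, so that $A_{\nu}[F^n]>0$ and $\mathcal{M}_{\nu}[F^n]\geq0$ make sense. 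Because $F_0\geq0$ and the source is nonnegative, the Duhamel representation gives $F^{n+1}\geq0$ by induction, which is the origin of (2) after passing to the limit. In terms of the perturbation, $f^{n+1}$ solves
\begin{equation*}
\partial_tf^{n+1}+v\cdot\nabla_xf^{n+1}+A_{\nu}[F^n]\,f^{n+1}=\frac{1}{1-\nu}P_{\nu}f^n+\frac{1}{1-\nu}\Big(\sum_i\mathcal{Q}^{A_{\nu}}_i\langle f^n,e_i\rangle\Big)f^n+\Gamma(f^n),
\end{equation*}
and $A_{\nu}[F^n]=\frac{1}{1-\nu}\big(1+\sum_i\mathcal{Q}^{A_{\nu}}_i\langle f^n,e_i\rangle\big)\geq\frac{1}{2(1-\nu)}$ for $M_0$ small, so the zeroth order term is coercive in the full velocity norm, not merely modulo $P_0$.

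For the uniform energy bound I would apply $\partial^{\alpha}_{\beta}$ with $|\alpha|+|\beta|\leq N$ to this equation and pair with $\partial^{\alpha}_{\beta}f^{n+1}$ in $L^2_{x,v}$. The transport term integrates to zero on $\mathbb{T}^3$; differentiating it in $v$ produces commutators of the form $\partial^{\alpha+e_k}_{\beta-e_k}f^{n+1}$, of the same total order, which are absorbed into the uniform dissipation by Young's inequality with a small constant. Derivatives falling on $A_{\nu}[F^n]$ carry a factor $O(\sqrt{\mathcal{E}(f^n)})$ by Lemma \ref{ULestimatesofRhoUT1} and are likewise absorbable; the term $\frac{1}{1-\nu}P_{\nu}f^n$ is controlled by Young's inequality together with $\|\partial^{\alpha}(P_{\nu}f^n)\|_{L^2_{x,v}}\leq C\|\partial^{\alpha}f^n\|_{L^2_{x,v}}$; and the cubic term and $\Gamma(f^n)$ are controlled by Lemma \ref{GammaEstimate}(1), a Cauchy--Schwarz in $x$, and, for the genuinely cubic contributions, the embedding $H^2(\mathbb{T}^3)\hookrightarrow L^{\infty}(\mathbb{T}^3)$, available since $N\geq4$. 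Summing over $\alpha,\beta$, using that the net coefficient of $\|\partial^{\alpha}_{\beta}f^{n+1}\|^2_{L^2_{x,v}}$ is a positive constant for $M_0$ small, and integrating in time, the zeroth order term reproduces the dissipative part of $\mathcal{E}$ and one is led to
\begin{equation*}
\mathcal{E}\big(f^{n+1}(t)\big)\leq\mathcal{E}(f_0)+C\int_0^t\mathcal{P}\big(\mathcal{E}(f^n(s))\big)\,ds\leq\mathcal{E}(f_0)+CT_*\,\mathcal{P}(M_0)
\end{equation*}
for a fixed polynomial $\mathcal{P}$ with $\mathcal{P}(0)=0$. Choosing $M_0$ small and then $T_*$ small enough that $\mathcal{E}(f_0)<M_0/2$ and $CT_*\mathcal{P}(M_0)\leq M_0/2$, an induction gives $\sup_{[0,T_*]}\mathcal{E}(f^n)\leq M_0$ for every $n$.

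Convergence would follow from a contraction estimate: $g^{n+1}:=f^{n+1}-f^n$ solves an equation of the same type with zero initial data and with forcing that is linear in $g^n$ and $g^{n-1}$ with coefficients bounded by $M_0$, using the multilinear structure of $\mathcal{Q}^{A_{\nu}}_i,\mathcal{Q}^{\mathcal{M}_{\nu}}_{ij}$ and the Lipschitz dependence on $f$ of the macroscopic fields and of $\mathcal{T}_{\nu}^{-1}$ (Proposition \ref{Equivalence T} and Lemma \ref{GammaEstimate}(2)--(3)). A plain $L^2_{x,v}$ estimate then yields $\sup_{[0,T_*]}\|g^{n+1}\|^2_{L^2_{x,v}}\leq CT_*\sup_{[0,T_*]}\|g^n\|^2_{L^2_{x,v}}$, a contraction for $T_*$ small, so $f^n\to f$ in $C([0,T_*];L^2_{x,v})$ and the limit is unique. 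The uniform energy bound together with weak-$*$ compactness and weak lower semicontinuity gives $\sup_{[0,T_*]}\mathcal{E}(f(t))\leq M_0$; the strong $L^2$ convergence lets one pass to the limit in the weak form of the equation, so $f$ solves (\ref{LBGK}), and $t\mapsto\mathcal{E}(f(t))$ is continuous by the equation. Finally $F=\mu+\sqrt{\mu}f\geq0$, and integrating (\ref{ES-BGK}) over $\mathbb{T}^3\times\mathbb{R}^3$ against $1$, $v$ and $|v|^2$, using the cancellation property $\int_{\mathbb{R}^3}(\mathcal{M}_{\nu}(F)-F)(1,v,|v|^2)^{\top}dv=0$ and that $A_{\nu}$ does not depend on $v$, gives $\frac{d}{dt}\int_{\mathbb{T}^3\times\mathbb{R}^3}F\,(1,v,|v|^2)^{\top}\,dxdv=0$; hence the conservation laws (\ref{ConservationLawsf}) hold in view of the hypothesis on $f_0$. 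The main obstacle is the one indicated at the outset: the iterates are only defined while $\mathcal{T}_{\nu}[F^n]$ stays positive definite and $\mathcal{T}_{\nu}^{-1}[F^n]$ stays bounded independently of $\nu$, and ensuring this remains compatible with propagating $\mathcal{E}(f^n)\leq M_0$ is exactly what Proposition \ref{Equivalence T} and the macroscopic estimates of Section~3 provide.
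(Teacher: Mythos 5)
Your proposal is correct and follows essentially the same route as the paper: the same iteration scheme $\partial_tF^{n+1}+v\cdot\nabla_xF^{n+1}=A_{\nu}[F^n]\big(\mathcal{M}_{\nu}[F^n]-F^{n+1}\big)$, the standard energy/contraction machinery of \cite{Guo-VMB,Yun}, and the identification of the one genuinely new point, namely that Proposition \ref{Equivalence T} and the macroscopic estimates keep $\mathcal{T}_{\nu}[F^n]$ uniformly positive definite so each iterate is well defined. The paper merely sketches these steps and cites the references; your write-up fills in the same argument in more detail.
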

\begin{proof}
We consider the following iteration scheme.
\begin{eqnarray}\label{Localscheme}
\partial_tF^{n+1}+v\cdot\nabla_xF^{n+1}=\frac{\rho_n T_n}{1-\nu}\left\{\mathcal{M}_{\nu}(F^n)-F^{n+1}\right\},
\end{eqnarray}
where $\mathcal{M}(F^n)$ is defined by
\begin{eqnarray*}
\mathcal{M}_{\nu}(F^n)=\frac{\rho^n}{\sqrt{\det(2\pi\mathcal{T})}}\exp\left(\frac{1}{2}(v-U^n)\left\{\mathcal{T}^{n}_{\nu }\right\}^{-1}(v-U^n)\right).
\end{eqnarray*}
$\rho^n$, $U^n$ and $\mathcal{T}^n_{\nu}$ denote the local density, bulk velocity and the temperature tensor associated with $F^n=\mu+\sqrt{\mu}f^n$.
With estimates on the nonlinear perturbation in Lemma \ref{GammaEstimate}, it is standard to prove the local existence
(See \cite{Guo-VMB,Yun}).
The only thing to be careful about is whether the temperature tensor $\mathcal{T}^n_{\nu}$ remains strictly positive definite
for each $n$, so that the iteration scheme is well-defined in each step. But this follows directly from Proposition \ref{Equivalence T} and Lemma \ref{UniformEst} - \ref{UniformEst2}.
\end{proof}
%
%
%
%
\section{Global Existence}
Now, having all the necessary estimates at hand, the global existence can be established using standard arguments (See \cite{Guo-VMB,Yun}).
We sketch the proof in this section. First, we need to recover the degeneracy of the
linearized relaxation operator to obtain the full coercivity. For this, we define
\begin{eqnarray*}
&&a(x,t)=\int_{\mathbb{R}^3}f\sqrt{\mu}dv,~ b_i(x,t)=\int_{\mathbb{R}^3}fv_i\sqrt{\mu}dv ~(i=1,2,3),
~ c(x,t)=\int_{\mathbb{R}^3}f|v|^2\sqrt{\mu}dv.
\end{eqnarray*}
We also define a macroscopic projection $P$ as follows:
\begin{eqnarray*}
Pf= a(x,t)\sqrt{\mu}+\sum_{i}b_i(x,t)v_i\sqrt{\mu}+c(x,t)|v|^2\sqrt{\mu}.
\end{eqnarray*}
Note that $P$ is not identical to $P_0$ but equivalent. Since $L_{\nu}\{Pf\}=0$ for  $-1/2<\nu<1$ by Corollary \ref{Kernel L},
we can split the linearized ES-BGK model (\ref{LBGK}) into the macroscopic part and the microscopic part as follows:
\begin{eqnarray*}
\{\partial_t+v\cdot\nabla_x\}\{Pf\}=-\{\partial_t+v\cdot\nabla_x\}\{(I-P)f\}+L\{(I-P)f\}+\Gamma(f).\
\end{eqnarray*}
%
%
%
%
%
We then expand the l.h.s and r.h.s with respect to the following basis $(1\leq i,j\leq 3)$:
\begin{eqnarray}\label{basis}
\big\{
\sqrt{\mu},v_i\sqrt{\mu}, v_iv_j\sqrt{\mu}, v_i^2\sqrt{\mu}, v_i|v|^2\sqrt{\mu}
\big\},
\end{eqnarray}
and compare coefficients on both sides to obtain the following micro-macro system \cite{Guo-VMB}:
\begin{eqnarray}\label{MicroMacro}
\partial_t a&=&\ell_{a}+h_{a},\nonumber\\
\partial_tb_i+\partial_{x_i}a&=&\ell_{abi}+h_{abi},\nonumber\\
\partial_{x_i}b_j+\partial_{x_j}b_i&=&\ell_{ij}+h_{ij}\quad (i\neq j)\\
\partial_{x_i} b_i+\partial_{t}c&=&\ell_{bci}+h_{bci},\nonumber\\
\partial_{x_i}c&=&\ell_{ci}+h_{ci},\nonumber
\end{eqnarray}
for $i,j =1,2,3$.
Then, by carefully studying this system, we find that the macroscopic part can be controlled by the macroscopic part as follows (See \cite{Guo-VMB}):
%
%
%
%
\begin{eqnarray}\label{MicroMacroEstimate}
\sum_{|\alpha|\leq N}\left\{\|\partial^{\alpha}a\|_{L^2_{x}}+\|\partial^{\alpha}b\|_{L^2_{x}}+\|\partial^{\alpha}c\|_{L^2_{x}}\right\}
\leq C \sum_{|\alpha|\leq N-1}\|\partial^{\alpha}(\ell_{\nu}+h_{\nu})\|_{L^2_{x}}.
\end{eqnarray}
We slightly abused the notation on the r.h.s for the simplicity of presentation. 
On the other hand, we can bound $\ell_{\nu}$ and $h_{\nu}$ by the energy norm of $f$ as
%
%
%
%
\begin{eqnarray*}
\sum_{|\alpha|\leq N-1}\|\partial^{\alpha}(\ell^{\nu}+h^{\nu})\|_{L^2_{x}}
\leq C_{\nu}\sum_{|\alpha|\leq N}\|(I-P)\partial^{\alpha}f\|_{L^2_{x,v}}+
C_{\nu}\sqrt{M_0}\sum_{|\alpha|\leq N}\|\partial^{\alpha}f\|_{L^2_{x,v}}.
\end{eqnarray*}
%
%
%
%
Combining this with (\ref{MicroMacroEstimate}), we see that
\begin{eqnarray*}
\sum_{|\alpha|\leq N}\|\partial^{\alpha}Pf\|^2_{L^2_{x,v}}
&\leq&\frac{1}{C}\sum_{|\alpha|\leq N}\left\{\|\partial^{\alpha}a\|^2_{L^2_{x,v}}+\|\partial^{\alpha}b\|^2_{L^2_{x,v}}+\|\partial^{\alpha}c\|^2_{L^2_{x,v}}\right\}\cr
&\leq& C\sum_{|\alpha|\leq N}\|\partial^{\alpha}(I-P)f\|^2_{L^2_{x,v}}
+C\sqrt{M}_0\sum_{|\alpha|\leq N}\|\partial^{\alpha}f\|^2_{L^2_{x,v}},
\end{eqnarray*}
which implies
\begin{eqnarray}\label{microMacroEstimate}
\sum_{|\alpha|\leq N}\|P\partial^{\alpha}f\|^2_{L^2_{x,v}}
\leq C\sum_{|\alpha|\leq N}\|(I-P)\partial^{\alpha}f\|^2_{L^2_{x,v}}.
\end{eqnarray}
Therefore, Proposition \ref{degenerate coercivity} together with (\ref{microMacroEstimate}) and the equivalence of $P_0$ and $P$ imply
the coercivity estimate for $L_{\nu}$:
There exists $\delta_{\nu}=\delta(\nu)>0$ such that
\begin{eqnarray}\label{Coercivity}
\sum_{|\alpha|\leq N}\langle L_{\nu}\partial^{\alpha}f,\partial^{\alpha}f\rangle\leq -\delta_{\nu}\sum_{|\alpha|\leq N}\|\partial^{\alpha}f(t)\|_{L^2_{x,v}},
\end{eqnarray}
when $f$ is sufficiently small in the energy norm.
We are now ready to derive the nonlinear energy estimates which enables us to extend the local solution into the global one.
%
%
%
%
Let $f$ be the smooth local in time solution constructed in Theorem \ref{localExistence}.
Taking $\partial^{\alpha}$ on both sides of (\ref{LBGK}), we obtain
\begin{eqnarray*}
\partial_t \partial^{\alpha}f+v\cdot\nabla_x \partial^{\alpha}f=L\partial^{\alpha}f+\partial^{\alpha}\Gamma(f).
\end{eqnarray*}
We then take inner product with $\partial^{\alpha}f$
\begin{eqnarray*}
\frac{d}{dt}\|\partial^{\alpha}f\|^2_{L^2_{x,v}}\leq \langle L\partial^{\alpha}f,\partial^{\alpha}f\rangle_{L^2_{x,v}}+\langle\partial^{\alpha}\Gamma(f),\partial^{\alpha}f\rangle_{L^2_{x,v}},
\end{eqnarray*}
and apply the coercivity estimates (\ref{Coercivity}) together with the nonlinear estimates in Lemma \ref{GammaEstimate} to derive
\begin{eqnarray*}
E^{\alpha}_0:\quad\frac{1}{2}\frac{d}{dt}\|\partial^{\alpha}f\|^2_{L^2_{x,v}}\!\!+\delta_{\nu}\sum_{|\alpha|\leq N}\|\partial^{\alpha}f\|^2_{L^2_{x,v}}\leq
C\sqrt{\mathcal{E}(f(t))}\mathcal{D}(f(t)),
\end{eqnarray*}
where $\mathcal{D}(f(t))$ denotes
\[
\mathcal{D}(f(t))=\sum_{|\alpha|+|\beta|\leq N}\|\partial^{\alpha}_{\beta}f(t)\|^2_{L^2_{x,v}}.
\]
We now turn to the general case involving the derivatives in the velocity variables. Applying $\partial^{\alpha}_{\beta}$ to (\ref{LBGK}), we get
\begin{eqnarray*}
\big\{\partial_t+v\cdot\nabla_x +\nu_0\big\}\partial^{\alpha}_{\beta}f
=\partial_{\beta_1}v\cdot\nabla_x\partial^{\alpha}_{\beta-\beta_1}f
+\partial_{\beta}P\partial^{\alpha}f+\partial^{\alpha}_{\beta}\Gamma(f,f).
\end{eqnarray*}
We multiply $\partial^{\alpha}_{\beta}f$, integrate over $\mathbb{R}^3_x\times \mathbb{R}^3_v$ and apply H\"{o}lder inequality with Lemma \ref{GammaEstimate} to see
\begin{eqnarray*}
\begin{split}
E^{\alpha}_{\beta}:\quad\frac{1}{2}\frac{d}{dt}\|\partial^{\alpha}_{\beta}f\|^2_{L^2_{x,v}}+\nu_0\|\partial^{\alpha}_{\beta}f\|^2_{L^2_{x,v}}
&\leq C\sum_{i}\|\partial^{\alpha+e_i}_{\beta-e_i}f\|_{L^2_{x,v}}\|\partial^{\alpha}_{\beta}f\|_{L^2_{x,v}}\cr
&+C\|\partial^{\alpha}f\|_{L^2_{x,v}}\|\partial^{\alpha}_{\beta}f\|_{L^2_{x,v}}+C\sqrt{\mathcal{E}f((t))}\mathcal{D}(f(t)).
\end{split}
\end{eqnarray*}
Then, we split the first two terms in the r.h.s using Young's inequality and gather relevant terms together to obtain
\begin{eqnarray*}
E^{\alpha}_{\beta}:\quad\frac{1}{2}\frac{d}{dt}\|\partial^{\alpha}_{\beta}f\|^2_{L^2_{x,v}}+\frac{\nu_0}{2}\|\partial^{\alpha}_{\beta}f\|^2_{L^2_{x,v}}
&\leq&C_{\varepsilon}\sum_{i}\|\partial^{\alpha+e_i}_{\beta-e_i}f\|^2_{L^2_{x,v}}+C_{\varepsilon}\|\partial^{\alpha}f\|^2_{L^2_{x,v}}\cr
&+&C\mathcal{E}(f(t))\mathcal{D}(f(t)).
\end{eqnarray*}
Now, we observe that the r.h.s of $\sum_{|\beta|=m+1}E^{\alpha}_{\beta}$ can be controlled by the good terms of
\begin{eqnarray*}
C_{m}\sum_{|\beta|=m}E^{\alpha}_{\beta}+C_m\sum_{\alpha}E^{\alpha}
\end{eqnarray*}
if $C_m$ is sufficiently large. By good terms, we mean the production terms on the l.h.s.
Therefore, we can find constants $C_m$ and $\delta_m$ inductively such that
\begin{eqnarray*}
\sum_{\substack{|\alpha|+|\beta|\leq N,\cr|\beta|\leq m}}\left\{C_m\frac{d}{dt}\|\partial^{\alpha}_{\beta}f\|^2_{L^2_{x,v}}
+\delta_m\|\partial^{\alpha}_{\beta}f\|^2_{L^2_{x,v}}\right\}\leq C_N\sqrt{\mathcal{E}(f(t))}\mathcal{D}(f(t)).
\end{eqnarray*}
From this energy estimate, the existence of global solutions follows from the standard continuity argument.
Remaining part of the Theorem 1.1 can be established in the exactly same manner as in the classical BGK case
\cite{Yun}.
This completes the proof.
\begin{section}{Acknowledgement}
The author would like to thank Prof. Yan Guo and Prof. Kazuo Aoki and Prof. Tai-Ping Liu for fruitful discussions.
This research was supported by Basic Science Research Program through
the National Research Foundation of Korea (NRF) funded by the Ministry of Science, ICT $\&$
Future Planning (NRF-2014R1A1A1006432)
\end{section}


\begin{thebibliography}{1}
\bibitem{A-B-L-P} {\sc P. Andries, J.-F. Bourgat, P. Le Tallec, B. Perthame},
{\em Numerical comparison between the Boltzmann and ES-BGK models for rarefied gases}, Comput. Methods Appl. Mech. Engrg. {\bf 191} (2002), no. 31, 3369-3390.
\bibitem{A-L-P-P} {\sc P. Andries, P. Le Tallec, J.-P. Perlat, B. Perthame}, {\em The Gaussian-BGK model of Boltzmann equation
with small Prandtl number}, Eur. J. Mech. B Fluids {\bf 19} (2000), no. 6, 813-830.
\bibitem{B-G-L1} {\sc C. Bardos, F. Golse, D. Levermore}, {\em Fluid dynamic limites of kinetic equations. I. Formal derivations},
J. Stat. Phys. {\bf 63} (1991), no. 1-2, 323-344.
\bibitem{B-G-L} {\sc C. Bardos, F. Golse, D. Levermore}, {\em Fluid dynamic limites of kinetic equations. II. Convergence proofs for the Boltzmann equation},
Comm. Pure. Appl. Math  {\bf 46} (1993), 667-753.
\bibitem{Bello} {\sc A. Bellouquid}, {\em Global existence and large-time behavior for BGK model for a gas with non-constant cross section}, Transport
 Theory Statist. Phys. {\bf 32 } (2003) no. 2, 157-185.
\bibitem{B-G-K} {\sc P.L. Bhatnagar, E. P. Gross, M. Krook} {\em A model for collision processes in gases. Small amplitude process in charged
and neutral one-component systems}, Physical Revies, {\bf 94} (1954), 511-525.
\bibitem{B-P} {\sc F. Bouchut, B. Perthame}, {\em A BGK model for small Prandtl number in the Navier-Stokes approximation},
J. Stat. Phys. {\bf 71} (1993), no. 1-2, 191-207.
\bibitem{C} {\sc C. Cercignani}, {\em The Boltzmann Equation and Its Application}, Springer-Verlag, 1988.
\bibitem{C-I-P} {\sc C. Cercignani, R. Illner, M. Pulvirenti}, {\em The Mathematical Theory of Dilute Gases}. Springer-Verlag, 1994.
\bibitem{C-C} {\sc C. Chapman, T. G. Cowling}, {\em The mathematical theory of non-uniform gases}, Cambridge University Press, 1970.
\bibitem{Chan} {\sc W. M. Chan}, {\em An energy method for the BGK model}, M. Phil thesis, City University of Hong Kong, 2007.
\bibitem{Duan} {\sc R. Duan}, {\em hypocoercivity of the linearized dissipative kinetic equations}, Nonlinearity, {\bf 24} (2011), no. 8, 2165-2189
\bibitem{Duan-StrainV} {\sc R. Duan, R. Strain}, {\em Optimal large-time behavior of the Vlasov-Maxwell-Boltzmann system in the whole space}, Comm. Pure. Appl. Math. {\bf 64} (2011),
no2, 375-413.
\bibitem{Duan-STrainM} {\sc R. Duan R. Strain} {\em Optimal time decay of the Vlasov-Poinsson-Boltzmann system in $\mathbb{R}^3$}, Arch. Rational. Mech. Anal. {\bf 199}
(2010), no.1, 291-328.
\bibitem{F-J} {\sc F. Filbet, S. Jin}, {\em An asymptotic preserving scheme for the ES-BGK model of the Boltzmann equation}, J. Sci. Comput. {\bf 46} (2011), no.2, 204-224.
\bibitem{GT} {\sc M.A. Galli, R. Torczynski}, {\em Investigation of the ellipsoidal-statistical Bhatnagar-Gross-Krook kinetic model applied to gas-phase transport
of heat and tangential momentum between parallel walls}, Phys. Fluids, {\bf 23} (2011) 030601
\bibitem{GL} {\sc R. Glassey}, {\em The Cauchy Problmes in Kinetic Theory}, SIAM 1996.
\bibitem{Guo-whole} {\sc Y. Guo}, {\em The Boltzmann equation in the whole space}, Indiana Univ. Math. J. {\bf 53} (2004). no.4, 1081-1094
\bibitem{Guo-VMB} {\sc Y. Guo}, {\em The Vlasov-Maxwell-Boltzmann system near Maxwellians}, Invent. Math. {\bf 153} (2003) no.3, 593-630
\bibitem{Guo-VPB} {\sc Y. Guo}, {\em The Vlasov-Poisson-Boltzmann system near Maxwellians}, Comm. Pure. Appl. Math., {\bf 55} (2002) no.9, 1104-1135.
\bibitem{Holway} {\sc L. H. Holway}, {\em Kinetic theory of schock structure using and ellipsoidal distribution function}, Rarefied Gas Dynamics, Vol. I
(Proc. Fourth Internat. Sympos., Univ. Toronto, 1964), Academic Press, New York, (1966), pp. 193-215.
\bibitem{Issau} {D. Issautier}, {\em Convergence of a weighted particle method for solving the Boltzmann (B.G,K.) equaiton}, Siam Journal on Numerical Analysis,
{\bf 33}, no 6 (1996), 2099-2199.
\bibitem{Kawashima} {\sc S. Kawashima}, {\em The Boltzmann equation and thirteen moments}, Japan J. Appl. Math. {bf 7} (1990), 301-320.
\bibitem{L-Y-Y} {\sc T.-P. Liu, T. Yang, T. S.-H. Yu}, {\em Energy method for Boltzmann equation}, Phys. D {\bf 188} (2004), no. 3-4, 178-192.
\bibitem{Mischler} {\sc S. Mischler}, {\em Uniqueness for the BGK-equation in $R^n$ and the rate of convergence for a semi-discrete scheme},
Differential integral Equations {\bf 9} (1996), no.5, 1119-1138.
\bibitem{M-S} {\sc L. Mieussens, H. Struchtrup}, {\em Numerical comparison of Bhatnagar-Gross-Krook models with proper Prandtl number}, Phys. Fluids {\bf 16}
(2004), no.8. 2797-2813
\bibitem{Perthame} {\sc B. Perthame}, {\em Global existence to the BGK model of Boltzmann equation} J. Differential Equations. {\bf 82} (1989), no.1, 191-205.
\bibitem{P-P} {\sc B. Perthame, M. Pulvirenti}, {\em Weighted $L^{\infty}$ bounds and uniqueness for the Boltzmann BGK model}, Arch. Rational Mech. Anal. {\bf 125}
(1993), no. 3, 289-295.
\bibitem{R-S-Y}{\sc G. Russo, P. Santagati, S,-B. Yun}, {\em Convergence of a semi-Lagrangian scheme for the BGK model of the Boltzmann equation},
SIAM J. Numer. Anal. {\bf 50} (2012), no. 3, 1111–1135.
\bibitem{SR1} {\sc L. Saint-Raymond}, {\em From the BGK model to the Navier-Stokes equations}, Ann. Sci. Ecole Norm. Sup {\bf 36} (2003), no.2, 271-317.
\bibitem{SR2} {\sc L. Saint-Raymond}, {\em Discrete time Navier-Stokes limit for the BGK Bltzmann equation}, Comm. Partial Differential Equations {\bf 27} (2002), no. 1-2,
149-184.
\bibitem{Sone} {\sc Y. Sone}, {\em Kinetic Theory and Fluid Mechanics}, Boston: Birkh\"{a}user, 2002.
\bibitem{Sone2} {\sc Y. Sone}, {\em Molecular Gas Dynamics: Theory, Techniques, and Applications}, Boston: Brikh\"{a}user, 2006.
\bibitem{Stru} {\sc H. Struchtrup}, {\em The BGK-model with velocity-dependent collision frequency}, Contin. Mech. Thermodyn. {\bf 9} (1997), no.1 , 23-31.
\bibitem{Stru-book} {\sc H. Struchtrup}, {\em Mesoscopic transport equaitons for rarefied gas flows: Approximation methods in kinetic theory}, Springer. 2005.
\bibitem{Ukai} {\sc S.  Ukai}, {\em On the existence of global solutions of a mixed problem for the nonlinear Boltzmann equation}, Proc. Japan Acad., Ser. A{\bf 53}, 179-184 (1974)
\bibitem{Ukai-BGK} {\sc S. Ukai}, {\em Stationary solutions of the BGK model equation on a finite interval with large boundary data}, Transport theory Statist. Phys.
{\bf 21} (1992) no.4-6.
\bibitem{U-T} {\sc S. Ukai, T. Yang}, {\em Mathematical Theory of Boltzmann equation}, Lecture Notes Series. no. 8, Liu Bie Ju Center for Math. Sci,
City University of Hong Kong, 2006.
\bibitem{V} {\sc C. Villani}, {\em A Review of mathematical topics in collisional kinetic theory}, Handbook of mathematical fluid dynamics. Vol. I.
North-Holland. Amsterdam, 2002, 71-305
\bibitem{Wal} {\sc  P. Walender}, {\em On the temperature jump in a rarefied gas}, Ark, Fys. {\bf 7}  (1954), 507-553.
\bibitem{Yun} {\sc S.-B. Yun}, {\em Cauchy problem for the Boltzmann-BGK model near a global Maxwellian}, J. Math. Phy. {\bf 51} (2010), no. 12, 123514, 24pp.
\bibitem{Z-H} {\sc X. Zhang, S. Hu}, {\em $L^p$ solutions to the Cauchy problem of the BGK equation}, J. Math. Phys. {\bf 48} (2007) no.11, 113304, 17pp.
\bibitem{Z-Stru} {\sc Y. Zheng, H. Struchtrup}, {\em Ellipsoidal statistical Bhatnagar-Gross-Krook model with velocity dependent collision frequency},
Phys. Fluids {\bf 17} (2005), 127103, 17pp.
\end{thebibliography}
\end{document}